    \newtheorem{theorem}{Theorem}
    \newtheorem{lemma}[theorem]{Lemma}
    \newtheorem{proposition}[theorem]{Proposition}
\theoremstyle{definition} % For roman text in the body
\newcommand{\eps}{\varepsilon}
\newcommand{\Z}{{\mathbb Z}}
\newcommand{\R}{{\mathbb R}}
\newcommand{\N}{{\mathbb N}}
\newcommand{\lstar}{{\raise-0.15ex\hbox{$\scriptstyle \ast$}}}
\theoremstyle{remark} % For an italic header, more subtle than definition style
\newcommand{\rr}{\mathbb{R}}
\newcommand{\ind}{{\bf 1}}
\newcommand{\re}{\textup{Re}}
\newcommand{\sch}{\textup{Sch}}
\newcommand{\Sineb}{\textup{Sine}_\beta}
\newcommand{\wt}{\widetilde}
\def\eqd{\stackrel{\scriptscriptstyle d}{=}}
\newcommand{\Pathschr}{\mathcal{J}_{\textup{Sch},T}}
\newcommand{\Pathsine}{\mathcal{J}_{\Sineb}}
\newcommand{\Psine}{\tilde{ \mathcal{J}}_{\textup{Sine}}}
\newcommand{\ldp}{\mathcal{I}}
\newcommand{\drift}{\mathfrak{f}}
\newcommand{\II}{\mathcal{H}}
\newcommand{\sineldp}{I_{\text{Sine}}}
\begin{document}

\title{Large deviations for the $\Sineb$ and $\sch_\tau$ processes}%

\author{Diane Holcomb\footnote{Department of Mathematics, University of Wisconsin - Madison,  holcomb@math.wisc.edu} \and Benedek Valk\'o\footnote{Department of Mathematics, University of Wisconsin - Madison, valko@math.wisc.edu}}%

%\address{}%
%\email{}%

%\thanks{}%
%\subjclass{}%
%\keywords{}%

%\date{\today\\ \smallskip (Not for distribution)}%
%\dedicatory{}%
%\commby{}%
% ----------------------------------------------------------------

\maketitle

\begin{abstract}
We study  two one-parameter families of point processes connected to random matrices: the $\Sineb$ and $\sch_\tau$ processes.  The first one is the bulk point process limit for the Gaussian $\beta$-ensemble. For $\beta=1, 2$ and $4$ it gives the limit of the GOE, GUE and GSE models of random matrix theory. In particular, for $\beta=2$ it is a determinantal point process conjectured to have similar behavior to  the critical zeros of the Riemann $\zeta$-function. The second process can be obtained as  the bulk scaling limit of the spectrum of certain discrete one-dimensional random Schr\"odinger operators. 

Both processes have asymptotically constant average density, in our normalization one expects close to $\tfrac{1}{2\pi}  \lambda$ points in a large interval of length $\lambda$. Our main results are large deviation principles for the average densities of the processes, essentially we compute  the asymptotic probability of seeing an unusual average density in a large interval. Our approach is based on the representation of the counting functions of these processes using stochastic differential equations. We also prove path level large deviation principles for the arising diffusions.

Our techniques work for the full range of parameter values. The results are novel even in the classical $\beta=1, 2$ and 4 cases for the $\Sineb$ process. They are consistent with the existing rigorous results on large gap probabilities and confirm the physical predictions made using log-gas arguments. 
%We study the two families of point processes: the $\Sineb$ and $\sch_\tau$ processes.  The first one gives the bulk point process limit for the Gaussian $\beta$-ensembles. For $\beta=1, 2$ and $4$ these are the limits of the GOE, GUE and GSE models of random matrix theory. The second one appeared as the bulk scaling limit of the spectrum of certain discrete one-dimensional random Schr\"odinger operators. 
%
%We consider the following problem: what is the asymptotic probability of seeing an unusual average density in a large interval. 
%We derive  large deviation principles for the average densities for both processes. Our main tool is the SDE representation of the counting functions for the two processes.  We also prove path level large deviation principles for certain diffusions.
%
%Our techniques work for the full range of parameter values. In particular, the results are novel even in the classical $\beta=1, 2$ and 4 cases for the $\Sineb$ process.
\end{abstract}

\section{Introduction}

The Gaussian orthogonal, unitary and symplectic ensembles (GOE, GUE, GSE) are some of the most studied random matrix models.
These are symmetric (resp.~Hermitian or symplectic) matrices with i.i.d.~standard real (resp.~complex or quaternion) normal entries modulo the appropriate symmetry. It has been know from the classical results of Gaudin and Mehta \cite{mehta} that if we appropriately scale the spectrum in the bulk (e.g.~near zero) then we obtain a  limiting point process. The point process can be described via its $n$-point correlation functions. These are given by determinantal formulas in the GUE case and Pfaffian formulas in the GOE, GSE cases. (See  \cite{AGZ}, \cite{ForBook}, \cite{mehta} for more details and the precise description.)

The GOE, GUE, GSE models can be naturally included in a one-parameter family of distributions. The joint eigenvalue distribution for these classical models is known to be
\begin{align}\label{betaens}
p(\lambda_1,\dots,\lambda_n)=\frac{1}{Z_{n,\beta}} \prod_{1\le i<j\le n} |\lambda_i-\lambda_j|^\beta e^{-\frac{\beta}4 \sum\limits_{i=1}^n x^2}
\end{align}
where $\beta$ is equal to 1, 2 and 4 in the three cases. Note, that the constant $\beta/4$  in the exponential can be easily changed via linear scaling.  It is natural to consider the density (\ref{betaens}) for general $\beta>0$, this is the Gaussian (or Hermitian) $\beta$-ensemble. In \cite{BVBV} the authors show the existence of the bulk scaling limit for general $\beta$. In particular, if $\Lambda_{n,\beta}$ is distributed according to (\ref{betaens}) then $2 \sqrt{n} \Lambda_{n,\beta}$ converges to a random point process, denoted by $\Sineb$. For $\beta=1,2, 4$ this gives the bulk limit process for the GOE, GUE, GSE ensembles.

The $\Sineb$ process can be described through its counting function using a system of stochastic differential equations.  Consider the system
\begin{align}\label{sineSDE1}
d \alpha_\lambda=\lambda \frac{\beta}{4}e^{-\tfrac{\beta}{4} t} dt+\re\left[(e^{-i \alpha_\lambda}-1)(dB_1+i dB_2)  \right], \qquad \alpha_\lambda(0)=0, \quad t\in [0,\infty)
\end{align}
where $B_1, B_2$ are independent standard Brownian motions. Note, that this is a one-parameter family of SDEs driven by the same complex Brownian motion. In \cite{BVBV} it was shown that $N_\beta(\lambda)=\lim\limits_{t\to \infty} \frac{1}{2\pi} \alpha_\lambda(t)$ exists almost surely and it is an integer valued monotone increasing function in $\lambda$. Moreover, the function $\lambda\to N_\beta(\lambda)$ has the same distribution as the counting function of the $\Sineb$ process, i.e.~the distribution of the number of points in $[0,\lambda]$ for $\lambda>0$ is given by that of $N_\beta(\lambda)$. 

Note, that for any fixed $\lambda$ the process $\alpha_\lambda$ satisfies the SDE
\begin{align}\label{sineSDE2}
d \alpha_\lambda=\lambda \frac{\beta}{4}e^{-\tfrac{\beta}{4} t} dt+2 \sin(\alpha_\lambda/2) dB_t, \qquad \alpha_\lambda(0)=0, \quad t\in [0,\infty)
\end{align}
where $B_t=B_t^{(\lambda)}=\int_0^t \re\big[-2 i e^{-\tfrac{1}{2}{i \alpha_\lambda(s)}}d(B_1+i dB_2)\big]$ is a standard Brownian motion which depends on $\lambda$. Thus, if we are interested in the number of points in a given interval $[0,\lambda]$ then it is enough to study the SDE (\ref{sineSDE2}) instead of the system (\ref{sineSDE1}).

Using the SDE characterization of the $\Sineb$ process one can show that it is  translation invariant with density $(2\pi)^{-1}$ (see \cite{BVBV}).  In particular, in a large interval $[0,\lambda]$ one expects roughly $(2\pi)^{-1} \lambda$ points.  In  \cite{KVV} the authors refined this by showing that $N_\beta(\lambda)$ satisfies a central limit theorem, it is asymptotically normal with mean $\frac{\lambda}{2\pi}$ and variance $\frac{2}{\beta \pi^2} \log \lambda$.

The goal of the current paper is to characterize the large deviation behavior of $N_\beta(\lambda)$.  We will find the asymptotic probability of seeing an average density different from $(2\pi)^{-1}$ on a large interval. Our main theorem will show that $\lambda^{-1} N_\beta(\lambda)$ satisfies a large deviation principle with a  good rate function.

Before stating the exact form of the theorem we need to introduce a couple of notations.  We will use
\begin{align}
K(a)= \int_0^{\pi/2} \frac{dx}{\sqrt{1-a \sin^2 x}}, \hspace{1cm}
E(a)= \int_0^{\pi/2} \sqrt{1-a\sin^2 x}dx, \label{KandE}
\end{align}
for the complete elliptic integrals of the first and second kind, respectively. Note that there are several conventions denoting these functions, we use the one in \cite{AbSt}.
 We also introduce the following  function for $a<1$:
\begin{align}
\label{defH}
\II(a)&= (1-a)K(a)-E(a).
\end{align}
Now we are ready to state our main theorem.
%
%More properties and precise asymptotics can be found in the appendix. \diane{does this fit, or should be make this flow a bit better?}
\begin{theorem}\label{thm:sineb1}
Fix $\beta>0$. The sequence of random variables $\frac{1}{ \lambda}N_\beta(\lambda)$ satisfies a large deviation principle with scale $\lambda^2$ and good rate function $\beta \sineldp (\rho)$ with
\begin{align}\label{defsineldp}
\sineldp (\rho)= \frac{1}{8} \left[ \frac{\nu}{8}+ \rho \II(\nu)\right],\qquad \nu=\gamma^{(-1)}(\rho),
\end{align}
where $\gamma$ is a continuous, strictly decreasing function given by 
%\begin{align}
%\rho= \frac{\II(\nu)}{8} \int_{-\infty}^\nu \II^{-2}(x)dx \hspace{.3cm} \text{ if } \rho>\frac{1}{2\pi} \hspace{.5cm} \text{ and } \hspace{.5cm} \rho= \frac{\II(\nu)}{8} \int_{1}^\nu \II^{-2}(x)dx \hspace{.3cm} \text{ if } \rho< \frac{1}{2\pi}.
%\end{align}
\begin{align}
\gamma(\nu)= \begin{cases} \,\,\frac{\II(\nu)}{8} \int\limits_{-\infty}^\nu \II^{-2}(x)dx, & \textup{ if }\nu<0,\\[14pt]
\qquad\quad  \tfrac{1}{2\pi}, & \textup{ if } \nu=0,\\[14pt]
\,\, \frac{\II(\nu)}{8} \int\limits_{1}^\nu \II^{-2}(x)dx, & \textup{ if } 0< \nu< 1,\\[14pt]
\qquad\quad 0, &\textup{ if } \nu=1.
 \end{cases}\label{rhonu}
\end{align}
%defined on $(-\infty,1)$. 
\end{theorem}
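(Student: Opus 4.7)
The plan is to deduce Theorem~\ref{thm:sineb1} from a sample-path large deviation principle for an appropriate rescaling of the diffusion $\alpha_\lambda$, then apply the contraction principle. After the time change $v = 1 - e^{-\beta t/4}$ and the space rescaling $\phi_\lambda(v) := \alpha_\lambda(t(v))/\lambda$, the SDE (\ref{sineSDE2}) becomes
\begin{equation*}
d\phi_\lambda = dv + \frac{4\sin(\lambda\phi_\lambda/2)}{\lambda\sqrt{\beta(1-v)}}\,dW_v, \qquad \phi_\lambda(0)=0,\ v\in[0,1),
\end{equation*}
with $\phi_\lambda(1^-) = (2\pi/\lambda)\,N_\beta(\lambda)$, so Theorem~\ref{thm:sineb1} reduces to an LDP for $\phi_\lambda(1)/(2\pi)$ at speed $\lambda^2$.

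The heart of the argument is a sample-path LDP for $\phi_\lambda$ on $C[0,1]$ at speed $\lambda^2$. Although the noise amplitude is $O(1/\lambda)$, standard Freidlin--Wentzell does not apply: the diffusion coefficient $\sin(\lambda\phi_\lambda/2)$ oscillates rapidly in the slow variable itself. I would partition $[0,1]$ into short subintervals on which $\phi_\lambda$ is approximately affine with some slope $a$, and on each subinterval perform a Girsanov tilt of the driving Brownian motion to realize slope $a$. The cost of the tilt must be computed by averaging against the invariant distribution of the fast angle $\alpha \bmod 4\pi$, which is \emph{not} uniform under the optimal tilt: it is the stationary measure of a pendulum-type dynamics parametrized by a single conserved quantity. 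Integrating against it produces an effective Lagrangian $L(a,v)$ on each piece; one then glues these via a discretization/localization argument (for the upper bound) and an explicit near-optimal construction (for the lower bound), yielding a sample-path LDP with rate function $\mathcal{I}(\phi)=\int_0^1 L(\phi'(v),v)\,dv$.

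Applying the contraction principle to the map $\phi \mapsto \phi(1)/(2\pi)$ gives
\begin{equation*}
\beta\,\sineldp(\rho) = \inf\!\Bigl\{\int_0^1 L(\phi'(v),v)\,dv \,:\, \phi(0)=0,\ \phi(1)=2\pi\rho\Bigr\}.
\end{equation*}
The Euler--Lagrange equation for this variational problem is a pendulum-type ODE whose first integral is precisely the parameter $\nu$ appearing in (\ref{defsineldp}); quadrature produces the complete elliptic integrals $K(\nu)$ and $E(\nu)$, and the function $\II$ from (\ref{defH}) arises through a period/action computation. The boundary condition $\phi(1)=2\pi\rho$ then takes the form $\rho=\gamma(\nu)$ of (\ref{rhonu}), and substituting back into the cost functional yields $\sineldp(\rho)$ in closed form.

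The principal technical obstacle is the averaging of the rapidly oscillating diffusion at the exponential scale $\lambda^2$. Naively replacing $\sin^2(\lambda\phi/2)$ by its uniform spatial mean $1/2$ produces a divergent variational problem, so one must take seriously the non-uniform invariant measure of the fast angle under the optimal tilt. Making this rigorous requires sharp exponential moment estimates for the tilted diffusion (for the upper bound) together with a matching near-optimal construction (for the lower bound). A secondary difficulty is the singularity of the rescaled SDE as $v \to 1$: one would prove the LDP first on $[0,1-\delta]$ and then bound the contribution from $[1-\delta,1]$ as $\delta \downarrow 0$.
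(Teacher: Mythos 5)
Your overall architecture --- a sample-path LDP for the rescaled phase, followed by the contraction principle and an Euler--Lagrange analysis producing elliptic integrals --- matches the paper's, and your identification of the effective dynamics as pendulum-type with a single conserved quantity is on target: the paper's tilted diffusion $dY=\tfrac{\lambda}{2}\sqrt{\cosh^2Y-a}\,dt+\cdots$ (in the coordinate $X=\log\tan(\wt\alpha_\lambda/4)$) is exactly that, with $a$ the conserved quantity and $4K(a)$ the period. However, you leave the central technical step unresolved. You correctly observe that naively averaging $\sin^2(\lambda\phi/2)$ fails and that one must work with the non-uniform invariant measure of the fast angle under the optimal tilt, but you give no mechanism for controlling this averaging at the exponential scale $\lambda^2$; you only state that it ``requires sharp exponential moment estimates.'' The device that replaces homogenization entirely in the paper is the renewal structure: for constant drift the process $\lfloor\wt\alpha_\lambda\rfloor_{2\pi}$ is nondecreasing and the waiting times between its jumps are i.i.d.\ copies of the hitting time $\tau_\lambda$ of $2\pi$ (Proposition \ref{prop:renewal}); the local rate function $\ldp(q)$ then follows from Cram\'er-type bounds using the exponential moments $Ee^{A\tau_\lambda}$, which are computed by a single Girsanov comparison with the $\sqrt{\cosh^2y-a}$ diffusion (Proposition \ref{prop:tau}), plus sandwich couplings (Proposition \ref{prop:stochdom}) to glue the subintervals. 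Without this idea, or an equivalent one, your plan does not close; the ``principal technical obstacle'' you name is the theorem.

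A second genuine gap is in the variational problem. Your Euler--Lagrange discussion ignores the constraint $\phi'\ge 0$ and the resulting free boundary. For $\rho<\tfrac{1}{2\pi}$ the minimizer is \emph{not} a global solution of the Euler--Lagrange equation: it satisfies $(1-t)\ldp'(g'(t))=c$ only on an interval $[0,a]$ and has $g'\equiv 0$ on $[a,1]$, with the free boundary fixed by continuity of $g'$ at $t=a$, which forces $c=-\tfrac{1-a}{2\pi}$ because $\ldp'(0^+)=-\tfrac{1}{2\pi}$. This is exactly where the two branches of $\gamma$ in (\ref{rhonu}) come from (the integral from $-\infty$ for $\nu<0$ versus from $1$ for $0<\nu<1$), and the dead zone $[a,1]$ is what produces the large-gap constant $\sineldp(0)=\tfrac{1}{64}$. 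Asserting that ``the boundary condition then takes the form $\rho=\gamma(\nu)$'' skips the case split that gives the theorem its stated form. You would also need to justify passing from the path LDP on $[0,1-\delta]$ to the endpoint functional at $v=1$; in the paper this requires both the tail bound of Proposition \ref{tailbound} and the Orlicz-space argument of Proposition \ref{prop:goodrate} showing that finite rate forces $g$ to have a finite limit at $1$.
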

Roughly speaking, this means that the probability of seeing close to  $\rho \lambda$ points in $[0,\lambda]$ for a large $\lambda$ is asymptotically $e^{-\lambda^2 \beta \sineldp (\rho)}$. The precise statement is that if $G$ is an open, and $F$ is a closed subset of $[0,\infty)$ then
\begin{align*}
\liminf_{\lambda\to \infty} \frac{1}{\lambda^2} P(\tfrac{1}{\lambda}N_\beta(\lambda)\in G)&\ge -\inf_{x\in G} \beta \sineldp (x), \quad
\limsup_{\lambda\to \infty} \frac{1}{\lambda^2} P(\tfrac{1}{\lambda}N_\beta(\lambda)\in F)\le -\inf_{x\in F} \beta \sineldp (x).
\end{align*}

The function $\gamma$ may also be defined as the solution to the  equation $4x(1-x)\gamma''(x)=  \gamma(x)$ on the intervals $(-\infty,0]$ and $[0,1]$ with boundary conditions $\lim\limits_{x\to 0^{\pm}} \gamma(x)=\tfrac{1}{2\pi}$, $\gamma(1)=0$ and $\lim\limits_{x\to -\infty} \frac{\gamma(x)}{\sqrt{|x|}}=\tfrac14$. 
The rate function $\sineldp (\rho)$ is strictly convex and  non-negative with $\sineldp(\tfrac1{2\pi})=0$ and $\sineldp(0)=\tfrac1{64}$. The function $\sineldp (\tfrac1{2\pi}+x)$ behaves like $-\tfrac{\pi^2 x^2}{4 \log (1/|x|)}$ for small $|x|$, and $\sineldp(\rho)$ grows like $\frac{1}{2} \rho^2 \log\rho$ as $\rho\to \infty$.  These statements will be proved in Proposition \ref{prop:sineldp}.

We  note that the behavior of $\sineldp(\rho)$ near $\rho=\tfrac1{2\pi}$ is formally consistent with the already mentioned  central limit theorem of $N_\beta(\lambda)$. For $\rho=\tfrac{1}{2\pi}+x$ with a small, but fixed $|x|$ the probability of seeing close to $\tfrac{1}{2\pi}\lambda+x \lambda$ points in $[0,\lambda]$ is approximately $\exp\big(\!\!-\tfrac{\beta \pi^2 \lambda^2 x^2}{4 \log(1/|x|)}\big)$.  Now let us assume, that this is true even if $x$ decays with $\lambda$, even though this regime is not covered in our theorem. If we substitute $\lambda x=\sqrt{\tfrac{2}{\beta \pi^2} \log \lambda} \cdot y$ (with a fixed $y$), then this  probability would asymptotically equal to $e^{-{y^2}/{2}}$. This is in agreement with the fact that $N_\beta(\lambda)$ is asymptotically normal with mean $\tfrac{1}{2\pi}\lambda$ and variance $\tfrac{2}{\beta \pi^2} \log \lambda$. 

Before moving on,  a couple of historical notes are in order. In \cite{BVBV} the authors also show another large deviation statement for the $\Sineb$ process regarding large intervals, namely that the asymptotic probability of not seeing any points in $[0,\lambda]$ is approximately $e^{-\tfrac{\beta}{64} \lambda^2}$. In \cite{BVBV2} this result was sharpened by providing the more precise asymptotics of 
\begin{align}\label{large_gap}
P(N_\beta(\lambda)=0)=(\kappa_\beta+o(1)) \lambda^{\upsilon_\beta} \exp\left\{-\tfrac{\beta}{64}\lambda^2+\left(\tfrac{\beta}{8}-\tfrac14\right)\lambda\right\}, \qquad \textup{as $\lambda\to \infty$}
\end{align}
with $\upsilon_\beta=\tfrac{1}{4}\left(\tfrac{\beta}{2}-\tfrac{2}{\beta}-3\right)$ and a positive constant $\kappa_\beta$ whose value was not determined. Similar results have been proven before for the classical cases $\beta=1, 2, 4$, see e.g.  \cite{BTW}, \cite{TW93}, \cite{Wi96}, \cite{DIZ96}. Moreover, the value of $\kappa_\beta$ and higher order asymptotics were also established for these specific cases by \cite{Kr04}, \cite{Ehr06}, \cite{DIKZ07}.  Further extension in the classical cases include the exact asymptotics of $P(N_\beta(\lambda)=n)$ for fixed $n$ and also for $n=o(\lambda)$. (See \cite{TW93} and \cite{ForBook} for details.) In all of  these results the main term of the asymptotic probability is  $e^{-\tfrac{\beta}{64} \lambda^2}$. This is consistent with our result, as Theorem \ref{thm:sineb1} and $ \sineldp (0)=\frac{1}{64}$ implies
\begin{align*}
\lim\limits_{\eps\to 0} \lim\limits_{\lambda\to \infty} \tfrac{1}{\lambda^2} \log P(N_\beta(\lambda)\le \eps \lambda)=-\tfrac{\beta}{64}.
\end{align*}
The large deviation rate function (\ref{defsineldp}) has been predicted using non-rigorous scaling and log-gas arguments in \cite{Dyson95} and \cite{FS95}. (See Section 14.6 of \cite{ForBook} for an overview.) Using the same techniques \cite{FW} treats the corresponding problem for the soft edge and hard edge limit processes of $\beta$-ensembles.

%\benedek{Mention the determinantal approach?}

 One can also study the large deviation behavior of the empirical distribution of the $\beta$-ensembles on a macroscopic level. 
It is known that after scaling with $\sqrt{n}$ the empirical measure of the distribution (\ref{betaens}) converges to the Wigner semicircle law. In \cite{BAG} the authors prove a large deviation principle for the scaled empirical measure, this describes the asymptotic probability of seeing a different density profile than the semicircle. One could consider our theorem a microscopic analogue of that result. 
%\benedek{Compute the profile?}
 \medskip

We will also consider another, related symmetric random matrix ensemble. Let $H_{n, \sigma}$ be a random symmetric tridiagonal matrix with entries equal to 1 above and below the diagonal and i.i.d.~normals with mean zero and variance $\tfrac{\sigma^2}{n}$ on the diagonal.  
\begin{equation}\label{shrod1dmatrix}
H_{n,\sigma}=\left( \begin{array}{ccccc}
\omega_1 & 1 &  &   &\\
1 & \omega_2& 1    && \\
  & 1  &\ddots & &\\
  & & &\ddots &1 \\
 & & &1 & \omega_n \\
\end{array} \right), \qquad \omega_i\sim N(0,\sigma^2 n^{-1} ).
\end{equation}
The matrix $H_{n,\sigma}$ can be viewed as a one-dimensional discrete random Schr\"odinger operator. In \cite{KVV} it was shown that the bulk scaling limit of the spectrum of  $H_{n,\sigma}$ (along appropriate subsequences) is given by a one parameter family of point processes with density $(2\pi)^{-1}$ denoted by $\sch_\tau$. (The parameter $\tau>0$ depends on $\sigma$ and the point in the spectrum where we zoom in to take the limit.) The process $\sch_\tau$ can be characterized via its counting function in a similar way to the $\Sineb$ process. Consider the following one-parameter family of SDEs:
\begin{align}\label{schrSDE1}
d \phi_\lambda=\lambda dt+dB_0+\re\left[e^{-i \phi\lambda} (dB_1+i dB_2)  \right], \qquad \phi_\lambda(0)=0, \quad t\in [0,\infty)
\end{align}
where $B_0, B_1, B_2$ are independent standard Brownian motions. Then the random set
\begin{align*}
\Lambda_\tau:=\{\lambda: \phi_{\lambda/\tau}(\tau)\in 2\pi \Z\}
\end{align*}
has the same distribution as $\sch_\tau$. 
Denote the counting function of the process by $\wt N_\tau$, i.e.~for $\lambda>0$ let $\wt  N_\tau(\lambda)=\#(\sch_\tau\cap [0,\lambda])$. 
In \cite{KVV} it was shown that $\wt  N_\tau(\lambda)$ is close to a normal with mean $\tfrac{\lambda}{2\pi}$ and  a constant variance $\tfrac{\tau}{4\pi^2}$. In our next result we derive the large deviation behavior of  $\wt  N_\tau(\lambda)$, this is the analogue of Theorem \ref{thm:sineb1} for the $\sch_\tau$ processes.

% Before stating the theorem we introduce the following function on $(0,\infty)$:

%Before stating the theorem we introduce the following two functions which will be used throughout the paper:
%\begin{align} 
%\label{Hdef}
%\II(a)&= (1+a)K(-a)-E(-a)\\
%\ldp(c)&=-\frac{a}{8}+\frac{\II(a)}{4 K(-a)}=\frac{2+a}{8}-\frac{E(-a)}{4 K(-a)}, \qquad a=-K^{-1}(\pi/2c) \label{ldpdef}
%\end{align}
%\diane{do we want to put the remark
%\[
%\lim\limits_{x\to \infty} \frac{1}{x^p} \ldp(x) = \begin{cases} \infty & p\leq 2\\
%0 & p>2
%\end{cases}
%\]
%here?}
%\diane{Also, somewhere we should make a remark about the limit of the rate function as $c\to0$.  Should that be here or in the appendix?}
\begin{theorem}\label{thm:schr1}
Fix $\tau>0$. The sequence of random variables $\frac{1}{ \lambda}\wt N_\tau(\lambda)$ satisfies a large deviation principle with scale $\lambda^2$ and rate function $\frac{1}{\tau} I_{\sch}(\cdot)$ where  $I_{\sch}(\rho)=\ldp(2\pi \rho)$ and for $q>0$
\begin{align}
\ldp(q)&%=-\frac{a}{8}+\frac{\II(a)}{4 K(-a)}
=\frac{2-a}{8}-\frac{E(a)}{4 K(a)}, \quad \textup{with }\quad a=a(q)=K^{-1}(\pi/(2q)).\label{ldpdef}
\end{align}
\end{theorem}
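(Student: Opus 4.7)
The plan is to mirror the proof of Theorem \ref{thm:sineb1}, reducing the counting-function LDP to a path-level LDP for a $\Sineb$-type SDE, now with constant drift on a finite time horizon. First, I would apply the standard comparison principle to \eqref{schrSDE1} to deduce that $\nu \mapsto \phi_\nu(\tau)$ is almost surely nondecreasing, which together with the definition of $\Lambda_\tau$ gives
\[
\wt N_\tau(\lambda) = \tfrac{1}{2\pi}\bigl(\phi_{\lambda/\tau}(\tau) - \phi_0(\tau)\bigr) + O(1)
\]
uniformly in $\lambda$. Writing $D_t := \phi_{\lambda/\tau}(t) - \phi_0(t)$ and applying the sum-to-product identities $\cos A - \cos B = -2\sin(\tfrac{A+B}{2})\sin(\tfrac{A-B}{2})$ and its sine counterpart, the contributions of $B_0,B_1,B_2$ collapse into a single Brownian increment weighted by $\sin(D/2)$, yielding
\[
dD = (\lambda/\tau)\,dt + 2\sin(D/2)\,d\tilde B, \qquad D_0 = 0,\ t\in[0,\tau],
\]
for a standard Brownian motion $\tilde B$. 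This is precisely \eqref{sineSDE2} but with constant drift $\lambda/\tau$ on the finite interval $[0,\tau]$ in place of the exponentially decaying drift on $[0,\infty)$.

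Second, I would rescale by $\psi_\lambda(s) := D(s\tau)/\lambda$ on $s\in[0,1]$; a Brownian time-change produces the small-noise, highly oscillatory SDE
\[
d\psi_\lambda = ds + \tfrac{2\sqrt{\tau}}{\lambda}\sin(\lambda\psi_\lambda/2)\,dW, \qquad \psi_\lambda(0) = 0,
\]
with $\wt N_\tau(\lambda)/\lambda = \psi_\lambda(1)/(2\pi) + O(1/\lambda)$. Via the contraction principle, the theorem reduces to a path-level LDP for $\psi_\lambda$ on $C([0,1])$ at scale $\lambda^2$. Because the coefficients are autonomous, I would expect a rate function of the form $\int_0^1 L_\tau(\psi'(s))\,ds$ for a convex Lagrangian $L_\tau$; the minimizer under $\psi(0)=0,\ \psi(1)=c$ is then the constant-speed path, and so $\ldp(c) = \tau L_\tau(c)$ with $c = 2\pi\rho$, yielding $\tfrac{1}{\tau}I_{\sch}(\rho)$ as the claimed rate function.

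The path-level LDP is the crux and the main obstacle. A naive Freidlin--Wentzell averaging (replacing $\sin^2(\lambda\psi_\lambda/2)$ by $\tfrac12$) would only give the Gaussian Lagrangian $(c-1)^2/(4\tau)$, which agrees with \eqref{ldpdef} near $c=1$ (where \eqref{ldpdef} Taylor-expands as $(c-1)^2/4$) but misses the true tails; in the large-deviation regime the high-frequency oscillations contribute a genuine non-Gaussian correction that must be tracked. I expect the same sine-SDE path-level LDP machinery required for Theorem \ref{thm:sineb1} to apply here with only formal modifications (constant drift, finite horizon), so once that framework is in place it should transfer directly. Finally, to derive the closed form \eqref{ldpdef} I would analyze the Euler--Lagrange equation for the optimal tilt, a pendulum-type ODE whose first integral provides the parameter $a$: $K(a)$ emerges as (a renormalized) quarter-period, the relation $K(a) = \pi/(2c)$ fixes the energy so the endpoint speed is $c$, and $E(a)$ enters through the action along the optimal orbit, producing after simplification $\ldp(c) = \tfrac{2-a}{8} - \tfrac{E(a)}{4K(a)}$.
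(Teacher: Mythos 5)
Your architecture matches the paper's: reduce $\wt N_\tau(\lambda)$ to $\tfrac{1}{2\pi}(\phi_{\lambda/\tau}(\tau)-\phi_0(\tau))+O(1)$ via monotonicity of $\nu\mapsto\phi_\nu(\tau)$, identify the difference process in law with the constant-drift diffusion $\wt\alpha_{\lambda/\tau}$ of \eqref{schrSDE0}, invoke a path-level LDP together with the contraction principle $g\mapsto g(T)$, and observe that convexity of the Lagrangian forces the linear minimizer, so the endpoint rate is the Lagrangian evaluated at the average slope. Your variational step is fine (Jensen's inequality gives it directly, and is in fact slightly cleaner than the paper's rearrangement argument, which is really only needed in the $\Sineb$ case where the weight $(1-t)$ breaks translation invariance).

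The gap is that everything of substance is deferred to the path-level LDP, and the deferral as you state it is circular. The constant-drift path LDP is not a corollary of "the machinery required for Theorem \ref{thm:pathsineb}" — it \emph{is} that machinery: the paper proves Theorem \ref{thm:pathschr} first and then obtains the $\Sineb$ path LDP from it by approximating the decaying drift with piecewise-constant drifts. The actual proof of Theorem \ref{thm:pathschr} does not proceed by Freidlin--Wentzell small-noise analysis of your rescaled SDE for $\psi_\lambda$ (whose diffusion coefficient $\tfrac{2\sqrt\tau}{\lambda}\sin(\lambda\psi/2)$ oscillates too fast for averaging to capture the tails, as you note); instead it exploits the renewal structure of $\lfloor\wt\alpha_\lambda\rfloor_{2\pi}$ (Proposition \ref{prop:renewal}) and reduces everything to sharp exponential-moment bounds for the single hitting time $\tau_\lambda=\inf\{t:\wt\alpha_\lambda(t)\ge 2\pi\}$. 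Correspondingly, the closed form \eqref{ldpdef} does not come from an Euler--Lagrange/pendulum computation: it comes from a Cameron--Martin--Girsanov tilt of the hitting-time diffusion $X_\lambda=\log\tan(\wt\alpha_\lambda/4)$ to the drift $\tfrac{\lambda}{2}\sqrt{\cosh^2 y-a}$, under which the explosion time concentrates at $t_a=4K(a)=\int_\rr 2(\cosh^2x-a)^{-1/2}dx$ and the Girsanov exponent produces $\II(a)=(1-a)K(a)-E(a)$; the rate function is then the Legendre-type expression obtained by optimizing over the tilt parameter $a$ (Proposition \ref{prop:tau} and Lemma \ref{lem:tailbnd}). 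Your pendulum heuristic correctly anticipates that $K$ appears as a period and $E$ through an action integral, but as written it does not constitute a proof, and without the hitting-time/Girsanov analysis (or an equivalent) the identification of the Lagrangian as $\ldp$ is unsupported.
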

The rate function $ I_{\sch}(\rho)$ is strictly convex and locally quadratic at the absolute minimum point $\rho=\tfrac{1}{2\pi}$.  (See Proposition \ref{prop:ldpconvex}.) The local behavior of $ I_{\sch}(\rho)$ at $\rho=\tfrac{1}{2\pi}$ is formally consistent with the fact that $N_\tau(\lambda)-\tfrac{\lambda}{2\pi}$ is close to a normal random variable with a constant variance $\tfrac{\tau}{4\pi^2}$.

%\benedek{notation for the rate functions} \diane{we still seem to be undecided on this}

The proofs of Theorems \ref{thm:sineb1} and \ref{thm:schr1} will rely on path level large deviation principles on the corresponding stochastic differential equations. These in turn will follow by analyzing the hitting time of $2\pi$ for the diffusion
\begin{align}\label{schrSDE0}
d\wt \alpha_\lambda=\lambda dt+2 \sin(\wt \alpha_\lambda/2) dB, \qquad \wt \alpha_\lambda(0)=0, \quad t\in [0,\infty).
\end{align}
Note, that  for a  fixed $\lambda$ the process $\wt \alpha_\lambda(t)$ is equal in distribution to $\phi_\lambda(t)-\phi_0(t)$ from (\ref{schrSDE1}).

In the next section we summarize some of the important properties of the SDEs we work with, and state the needed path level large deviation results. In Section \ref{s:hitting} we study  diffusion $\wt \alpha_\lambda$ of  (\ref{schrSDE0}) using the Cameron-Martin-Girsanov change of measure technique. In Sections \ref{s:pathschr} and \ref{s:pathsine} we derive  path level large deviations for the diffusions $\alpha_\lambda$ and $\wt \alpha_\lambda$ from (\ref{sineSDE2}) and (\ref{schrSDE0}).
In Section \ref{s:goodrate} we analyze the rate functions for the path level large deviations and in Section \ref{s:endpoint} we complete the proofs of Theorems \ref{thm:sineb1} and \ref{thm:schr1}. In the Appendix we will discuss various properties and asymptotics of the used special functions.

\section{Properties of the diffusions corresponding to $\Sineb$ and $\sch_\tau$}\label{s:diff}

Our starting point is the observation that if $\lambda>0$ is fixed, then  if the diffusion $\wt \alpha_\lambda$ (defined in (\ref{schrSDE0})) hits $2n \pi$ for $n\in \Z$, it will stay above it. This can be seen from the fact that when $\wt \alpha_\lambda$ hits $2n \pi$ the noise term vanishes, but the drift term is always positive.
Introduce the notations
\begin{align}\notag
\lfloor y \rfloor_{2\pi}= \max\{ 2\pi k: 2\pi k \leq y\}, \qquad  \lceil y \rceil_{2\pi}= \min\{ 2\pi k: 2\pi k \geq y\}.
\end{align}
 From the strong Markov property we immediately get the following proposition.
\begin{proposition}\label{prop:renewal}
Fix $\lambda>0$. Then the process $\lfloor \wt  \alpha_\lambda(t) \rfloor_{2\pi}$ is non-decreasing in $t$. Moreover, the waiting times between the jump times of this process are i.i.d.~with the same  distribution as the hitting time 
\begin{align}\label{hitting}
\tau_\lambda=\inf \{t: \wt \alpha_\lambda(t)\ge 2\pi\}.
\end{align}
\end{proposition}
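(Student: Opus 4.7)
The plan is to deduce both assertions from the strong Markov property combined with the key geometric fact that once $\wt\alpha_\lambda$ reaches a level $2n\pi$, it remains at or above that level thereafter.

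Set $T_n = \inf\{t \ge 0 : \wt\alpha_\lambda(t) \ge 2n\pi\}$, with $T_0 = 0$; continuity of paths gives $\wt\alpha_\lambda(T_n) = 2n\pi$ on $\{T_n < \infty\}$. Using the identity $\sin(\tfrac{y+2n\pi}{2}) = (-1)^n \sin(y/2)$ and the fact that $\pm B$ is a Brownian motion whenever $B$ is, the strong Markov property applied at $T_n$ shows that the shifted process $Y^{(n)}_s := \wt\alpha_\lambda(T_n + s) - 2n\pi$ is a weak solution of (\ref{schrSDE0}) starting at $0$. Since the diffusion coefficient $2\sin(\cdot/2)$ is globally Lipschitz and the drift is constant, (\ref{schrSDE0}) admits a pathwise unique strong solution, hence uniqueness in law; consequently $Y^{(n)} \eqd \wt\alpha_\lambda$.

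The central step is to show $\wt\alpha_\lambda(t) \ge 0$ almost surely for all $t \ge 0$. This will follow from the standard one-dimensional SDE comparison theorem applied to two equations with a common diffusion coefficient: the deterministic function $Z \equiv 0$ solves $dZ = 0 \cdot dt + 2\sin(Z/2)\, dB$ with $Z_0 = 0$ (since both coefficients vanish at $0$, and the zero path is the unique solution by pathwise uniqueness), while $\wt\alpha_\lambda$ satisfies the same equation but with strictly larger drift $\lambda > 0$; therefore $\wt\alpha_\lambda(t) \ge Z_t = 0$ for all $t$. Combined with the preceding paragraph, this yields $\wt\alpha_\lambda(T_n + s) \ge 2n\pi$ for all $s \ge 0$, so $\lfloor \wt\alpha_\lambda(t) \rfloor_{2\pi}$ is non-decreasing, with jumps exactly at the times $T_n$ and of size $2\pi$.

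For the renewal claim, the waiting time $T_{n+1} - T_n$ equals the first hitting time of $2\pi$ for the process $Y^{(n)}$; conditionally on $\mathcal{F}_{T_n}$ it is therefore distributed as $\tau_\lambda$ and is independent of $\mathcal{F}_{T_n}$, and iterating yields the i.i.d.\ structure. The main obstacle is the nonnegativity sub-claim, since the diffusion coefficient degenerates at the origin and a naive Gronwall or moment argument is unavailable; the comparison route against the trivial solution $Z \equiv 0$ sidesteps this by producing an explicit pathwise lower bound driven by the same Brownian motion.
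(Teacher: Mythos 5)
Your proof is correct and follows essentially the same route as the paper: the observation that $\wt\alpha_\lambda$ stays at or above $2n\pi$ once it hits that level, combined with the strong Markov property at the hitting times. The paper justifies the "stays above" step only by the informal remark that the noise vanishes at $2n\pi$ while the drift is positive; your comparison against the trivial solution $Z\equiv 0$ (together with the shift identity $\sin(\tfrac{y+2n\pi}{2})=(-1)^n\sin(y/2)$ and uniqueness in law) is a clean rigorous implementation of exactly that idea.
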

Consider the diffusions $\wt \alpha^{(1)}_\lambda$ and $\wt \alpha^{(2)}_\lambda$ which are strong solutions of the SDE (\ref{schrSDE0}), but with initial conditions $\wt \alpha^{(1)}_\lambda(0)=c_1\le \wt \alpha^{(2)}_\lambda=c_2$. Then a simple coupling argument shows that $\wt \alpha^{(1)}_\lambda(t) \le \wt \alpha^{(2)}_\lambda(t)$ for all $t\ge 0$. Our next proposition will build on this statement using the strong Markov property. 
\begin{proposition}\label{prop:stochdom}
Let $0=t_0<t_1<t_2<\dots<t_n=T$ and fix a $\lambda>0$. Consider the solution $\wt \alpha_\lambda(t)$ of (\ref{schrSDE0}) on $[0,T]$. Then there exists independent random variables $\xi_1, \xi_2, \dots, \xi_n$ so that 
\begin{align}\label{order}
 \lfloor\xi_i \rfloor_{2\pi} \le \lfloor \wt \alpha_\lambda(t_i)\rfloor_{2\pi}-\lfloor \wt \alpha_\lambda(t_{i-1})\rfloor_{2\pi}\le  \lfloor\xi_i \rfloor_{2\pi}+2\pi, \quad 1\le i \le n,
\end{align}
and $\xi_i$ is distributed as $\wt \alpha_\lambda(t_i-t_{i-1})$.
\end{proposition}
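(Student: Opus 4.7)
The plan is to iterate the strong Markov property at the intermediate times $t_1,\dots,t_{n-1}$ together with the shift invariance of the SDE (\ref{schrSDE0}): if $\wt\alpha$ solves (\ref{schrSDE0}) driven by a standard BM $B$, then $\wt\alpha-2\pi k$ solves the same SDE driven by $(-1)^k B$. Writing $s_i=t_i-t_{i-1}$, $k_{i-1}=\lfloor\wt\alpha_\lambda(t_{i-1})\rfloor_{2\pi}/(2\pi)$, and $\Delta_i=\lfloor\wt\alpha_\lambda(t_i)\rfloor_{2\pi}-\lfloor\wt\alpha_\lambda(t_{i-1})\rfloor_{2\pi}$, this realizes the shifted process $\wt\alpha_\lambda(t_{i-1}+\cdot)-2\pi k_{i-1}$ on $[0,s_i]$ as a solution of (\ref{schrSDE0}) started at $c_i'=\wt\alpha_\lambda(t_{i-1})-2\pi k_{i-1}\in[0,2\pi)$ driven by the Brownian motion $\tilde B_i(s):=(-1)^{k_{i-1}}(B(t_{i-1}+s)-B(t_{i-1}))$, which is independent of $\mathcal{F}_{t_{i-1}}$.

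I would then set $\xi_i:=Z_i(s_i)$, where $Z_i$ solves (\ref{schrSDE0}) on $[0,s_i]$ with $Z_i(0)=0$ driven by $\tilde B_i$; since $\tilde B_i$ is a standard BM, $\xi_i$ has the law of $\wt\alpha_\lambda(s_i)$ as required. To check mutual independence of $\xi_1,\dots,\xi_n$ it suffices to show that the BMs $\tilde B_1,\dots,\tilde B_n$ are mutually independent: each $\tilde B_j$ is a BM independent of $\mathcal{F}_{t_{j-1}}$, while for $i<j$ the variable $\xi_i$ is $\mathcal{F}_{t_i}$-measurable with $\mathcal{F}_{t_i}\subseteq\mathcal{F}_{t_{j-1}}$, so iterating the conditioning yields the claim.

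For the two bounds in (\ref{order}) I would apply the monotone coupling mentioned right before the proposition to $Z_i$ (started at $0$) and the shifted true process (started at $c_i'\ge 0$), both driven by $\tilde B_i$; since $0\le c_i'$, this gives $Z_i(s_i)\le\wt\alpha_\lambda(t_i)-2\pi k_{i-1}$ and hence the lower bound $\lfloor\xi_i\rfloor_{2\pi}\le\Delta_i$. The upper bound $\Delta_i\le\lfloor\xi_i\rfloor_{2\pi}+2\pi$ would come from comparing the shifted true process with the diffusion $Z_i'$ driven by $\tilde B_i$ starting at $2\pi$, together with the observation that $Z_i'-2\pi$ solves (\ref{schrSDE0}) driven by $-\tilde B_i$ from $0$ and so has the same law as $Z_i$ viewed as a process. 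The main obstacle will be this upper bound: the difference $Z_i'-Z_i$ is only a nonnegative continuous local martingale starting at $2\pi$ whose sample paths can in principle cross above $2\pi$, so ruling out a floor gap exceeding one multiple of $2\pi$ over $[0,s_i]$ is the delicate point and should rely on the renewal structure provided by Proposition \ref{prop:renewal}.
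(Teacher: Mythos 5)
Your construction of the $\xi_i$, the independence argument, and the lower bound in (\ref{order}) all match the paper's proof (strong Markov property at the $t_{i-1}$, restarted solutions, monotone coupling from ordered initial conditions). The genuine gap is exactly where you suspect it: the upper bound. If $Z_i'$ is defined as a \emph{new} solution started from $2\pi$ and driven by $\tilde B_i$, then $Z_i'-2\pi$ is driven by $-\tilde B_i$ and therefore agrees with $Z_i$ only \emph{in law}, not pathwise. The two-sided bound (\ref{order}) must hold with a single random variable $\xi_i$, so an upper comparison process whose terminal value only has the distribution of $\xi_i+2\pi$ is useless here; and indeed, as you observe, $Z_i'-Z_i$ is a nonnegative local martingale started at $2\pi$ and can exceed $2\pi$, so the chain of floor inequalities breaks. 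Proposition \ref{prop:renewal} does not repair this: the renewal structure controls the jump times of $\lfloor\wt\alpha_\lambda\rfloor_{2\pi}$ for a single trajectory, not the discrepancy between two distinct solutions driven by different noises.

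The correct fix — and the one the paper uses — is to take the upper comparison process to be $Z_i+2\pi$ \emph{pathwise} (in the paper's notation, $\wt\alpha^{(2)}_\lambda=\hat\alpha_i+\lfloor\wt\alpha_\lambda(t_{i-1})\rfloor_{2\pi}+2\pi$), so that its value at $t_i$ is literally $\xi_i+\lfloor\wt\alpha_\lambda(t_{i-1})\rfloor_{2\pi}+2\pi$ and the upper bound in (\ref{order}) reduces to the identity $\lfloor\xi_i+2\pi\rfloor_{2\pi}=\lfloor\xi_i\rfloor_{2\pi}+2\pi$. What then has to be justified is that this shifted path dominates $\wt\alpha_\lambda$ on $[t_{i-1},t_i]$. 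In the real formulation (\ref{schrSDE0}) adding $2\pi$ flips the sign of the diffusion coefficient, so the one-dimensional comparison theorem does not apply directly; one passes to the complex formulation (\ref{xi}), where $e^{-i(\xi+2\pi k)}=e^{-i\xi}$ makes the shift by any multiple of $2\pi$ an exact pathwise symmetry of the equation, and then the ordering of the initial values $\wt\alpha^{(1)}_\lambda(t_{i-1})\le\wt\alpha_\lambda(t_{i-1})\le\wt\alpha^{(1)}_\lambda(t_{i-1})+2\pi$ is preserved by Proposition \ref{prop:SDEprop}(ii). With this one change your argument coincides with the paper's.
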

\begin{proof}
Let $\hat \alpha_i(s)$ be defined as the strong solution of (\ref{schrSDE0}) on $[t_{i-1},t_i]$ with initial condition $\hat \alpha_i(t_{i-1})=0$ and let $\xi_i= \hat \alpha_i(t_i)$. Clearly,
$\xi_i, 1\le i \le n$ are independent random variables and $\xi_i\eqd \wt \alpha_\lambda(t_i-t_{i-1})$, we just have to show that (\ref{order}) holds. Fix an integer $1\le i \le n$ and define 
\begin{align*}
\wt \alpha^{(1)}_\lambda(s)=\hat \alpha_i(s)+\lfloor \wt \alpha_\lambda(t_{i-1})\rfloor_{2\pi}, \qquad \wt \alpha^{(2)}_\lambda(s)=\hat \alpha_i(s)+\lfloor \wt \alpha_\lambda(t_{i-1})\rfloor_{2\pi}+2\pi, \qquad s\in [t_{i-1},t_i].
\end{align*}
Then $\wt \alpha_\lambda, \wt \alpha^{(1)}_\lambda, \wt \alpha^{(2)}_\lambda$ are all strong solutions of (\ref{schrSDE0}) on $ [t_{i-1},t_i]$ with initial conditions
\begin{align*}
\wt \alpha^{(1)}_\lambda(t_{i-1})\le \wt \alpha_\lambda(t_{i-1})\le  \wt \alpha^{(2)}_\lambda(t_{i-1})= \wt \alpha^{(1)}_\lambda(t_{i-1})+2\pi.
\end{align*}
\begin{figure}[!t]
\centering
\includegraphics[width=0.8\textwidth]{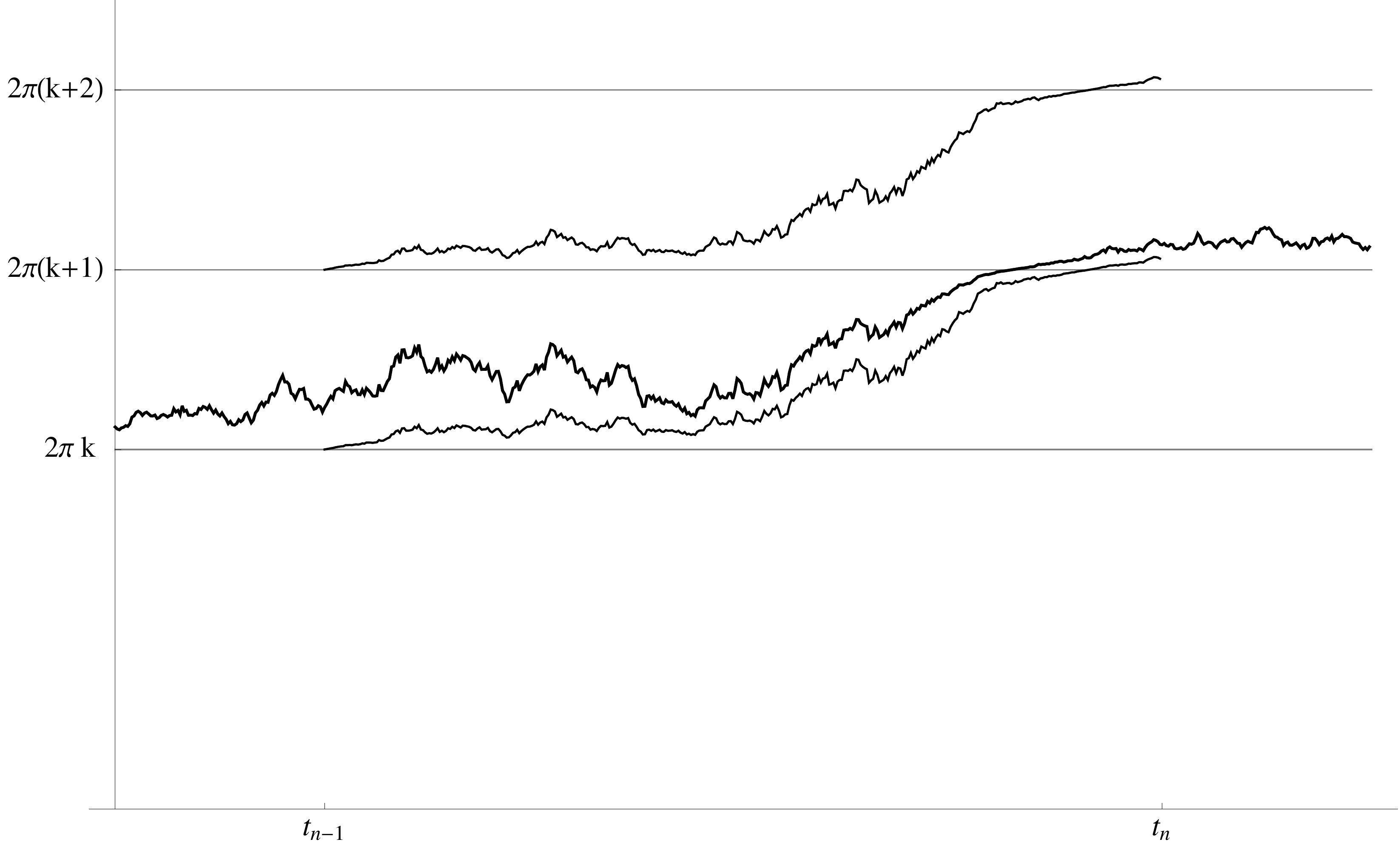}
\caption{ \footnotesize{The coupling of Proposition \ref{prop:stochdom}. The process $\wt \alpha_\lambda$ is the diffusion in the middle, it is sandwiched between $\wt \alpha^{(1)}_\lambda$ and $\wt \alpha^{(2)}_\lambda=\wt \alpha^{(1)}_\lambda+2\pi$ which start at integer multiples of $2\pi$ at the beginning of the coupling interval.} }\label{fig:coupling}
\end{figure}
The ordering is preserved by the coupling so we have
\begin{align*}
\wt \alpha^{(1)}_\lambda(t_{i})\le \wt \alpha_\lambda(t_{i})\le  \wt \alpha^{(2)}_\lambda(t_{i})= \wt \alpha^{(1)}_\lambda(t_{i})+2\pi.
\end{align*}
(See Figure \ref{fig:coupling} for an illustration.) From this we get 
\begin{align*}
\lfloor \wt \alpha_\lambda(t_i)\rfloor_{2\pi}-\lfloor \wt \alpha_\lambda(t_{i-1})\rfloor_{2\pi}&=
\lfloor \wt \alpha_\lambda(t_i)\rfloor_{2\pi}-\lfloor \wt \alpha^{(1)}_\lambda(t_{i-1})\rfloor_{2\pi}\\
&\ge \lfloor \wt \alpha^{(1)}_\lambda(t_i)\rfloor_{2\pi}-\lfloor \wt \alpha^{(1)}_\lambda(t_{i-1})\rfloor_{2\pi}= \lfloor\xi_i \rfloor_{2\pi},
\end{align*}
and
\begin{align*}
\lfloor \wt \alpha_\lambda(t_i)\rfloor_{2\pi}-\lfloor \wt \alpha_\lambda(t_{i-1})\rfloor_{2\pi}&=
\lfloor \wt \alpha_\lambda(t_i)\rfloor_{2\pi}-\lfloor \wt \alpha^{(1)}_\lambda(t_{i-1})\rfloor_{2\pi}\\
&\le  \lfloor \wt \alpha^{(2)}_\lambda(t_i)\rfloor_{2\pi}-\lfloor \wt \alpha^{(1)}_\lambda(t_{i-1})\rfloor_{2\pi}= \lfloor\xi_i \rfloor_{2\pi}+2\pi.\qedhere
\end{align*}
\end{proof}
We will also need another type of coupling for a slightly more general family of diffusions. 
Consider the SDE
\begin{align}\label{xi}
d\xi_{f,c}=f dt+\re ((e^{-i \xi_{f,c}}-1)(dB_1+i dB_2)), \quad \xi_{f,c}(0)=c, \quad t\in [0,\infty)
\end{align}
where $f$ is an integrable non-negative function. Note, that for fixed $f, c$ this process has the same distribution as
\begin{align}\label{xisine2}
d\wt \xi_{f,c}=f dt+2 \sin(\wt \xi_{f,c}/2) dB, \quad \wt \xi_{f,c}(0)=c, \quad t\in [0,\infty)
\end{align}
The following properties of $\xi_{f,c}$ follow from the basic theory of diffusions and standard coupling arguments. 
\begin{proposition}\label{prop:SDEprop}
\begin{enumerate}
\item[(i)] Let $\tau_{2\pi n}$ be the hitting time of $2\pi n$, where $2\pi n>c$ and $n$ is an integer. Then for any $t>\tau_{2\pi n}$ we have $\xi_{f,c}\ge 2\pi n$. In particular, if $c\ge 0$ then $\xi_{f,c}(t)$ stays non-negative for all $t>0$. 

\item[(ii)]    If $f\ge g$ and $\xi_{f,a}$ and $\xi_{g,b}$ are driven by the same Brownian motions then $\xi_{f,a}-\xi_{g,b}$ has the same distribution as $\xi_{f-g,a-b}$. If $a\ge b$ then $\xi_{f,a}-\xi_{g,b}$ stays a.s.~non-negative for all $t$.  

%\item[(iii)]  Suppose that $a<b$. Then if  $\xi_{f,a}$, $\xi_{f,b}$ are driven by the same Brownian motions then  
%$
%\xi_{f,a}(t)\le \xi_{f,b}(t)$ for $t\ge 0$. 
%
%
%\item[(iv)]  If $0\le c\le 2\pi$ then 
%\begin{align}
%\xi_{f,0}(t)\le_d \xi_{f,c}(0)  \le_d \xi_{f,0}(t)+2\pi.
%\end{align}
%

\item[(iii)]  For any finite $T$ we have the following exponential tail bound
\begin{align}\label{expbnd}
P(\xi_{f,0}(T)\ge ka)\le 2  \left(\frac{\int_0^T f(t) dt}{2\pi a}\right)^k, \qquad k\in \N.
\end{align}
 If $\int_0^\infty f(t) dt<\infty$ then $\xi_{f,c}(\infty)=\lim\limits_{t\to \infty} \xi_{f,c}(t)$ exists a.s.~and  the previous bound holds for $T=\infty$ as well.

 \end{enumerate}
\end{proposition}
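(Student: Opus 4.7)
For part (i), the noise coefficient $2\sin(\xi/2)$ vanishes on $2\pi\Z$ while the drift $f$ is non-negative, so every constant function $t\mapsto 2\pi n$, $n\in\Z$, is a sub-solution of (\ref{xisine2}). The standard one-dimensional SDE comparison theorem (Ikeda--Watanabe; applicable since $\sin(\cdot/2)$ is globally Lipschitz) then forces $\xi_{f,c}(t)\ge 2\pi n$ for every $t\ge\tau_{2\pi n}$. Non-negativity of $\xi_{f,c}$ when $c\ge 0$ follows by the same comparison with the sub-solution $0$.

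For (ii), I would compute $d(\xi_{f,a}-\xi_{g,b})$ directly in the complex-driven form (\ref{xi}). The trigonometric identity $e^{-i\alpha}-e^{-i\beta}=-2i e^{-i(\alpha+\beta)/2}\sin((\alpha-\beta)/2)$ gives
\begin{align*}
d(\xi_{f,a}-\xi_{g,b})=(f-g)\,dt+2\sin\!\bigl((\xi_{f,a}-\xi_{g,b})/2\bigr)\,d\wt B,
\end{align*}
where $d\wt B=\re\!\bigl[-ie^{-i(\xi_{f,a}+\xi_{g,b})/2}(dB_1+i\,dB_2)\bigr]$. A direct quadratic variation check yields $\langle\wt B\rangle_t=t$, so $\wt B$ is a standard Brownian motion by L\'evy's characterization. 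Weak uniqueness for (\ref{xisine2}) then identifies the difference in distribution with $\xi_{f-g,a-b}$, and the a.s.~non-negativity when $a\ge b$ and $f\ge g$ follows by applying (i) to the difference process.

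For (iii), the plan is an induction on $k$ via the strong Markov property at the hitting times $\tau_j=\inf\{t:\xi_{f,0}(t)\ge ja\}$, combined with part (ii) to pass from the post-$\tau_{j-1}$ trajectory to a fresh copy of the SDE with the remaining drift $f(\tau_{j-1}+\cdot)$ of total mass $\le M$. The base case $k=1$ rests on the fact that the martingale part of $\xi_{f,0}(t)$ has bounded integrand ($|2\sin(\xi/2)|\le 2$), so $\xi_{f,0}(t)-\int_0^t f\,ds$ is a true $L^2$-martingale for finite $T$ and $E[\xi_{f,0}(T)]=M$; together with $\xi_{f,0}\ge 0$ from (i), this produces the one-level tail bound. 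Chaining the conditional probabilities through the induction then gives the geometric factor $(M/(2\pi a))^k$. The extension to $T=\infty$ under $\int_0^\infty f<\infty$ is a monotone-convergence upgrade: the finite-$T$ bound, taken uniformly over $T$, shows that $\xi_{f,c}$ is a non-negative submartingale bounded in $L^1$, hence convergent a.s., and taking $T\to\infty$ in the tail estimate yields the limiting version.

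The main obstacle is producing the sharp constants $2(2\pi a)^{-k}$: a naive Markov application based on $E[\xi_{f,0}(T)]=M$ and $\xi_{f,0}\ge 0$ gives only $P(\xi_{f,0}(T)\ge a)\le M/a$, which is off by a factor $2\pi$ from what the induction needs to propagate, and there is an additional leading 2 to account for. Closing this gap will require a finer estimate that exploits the $2\pi$-periodicity of the diffusion coefficient---for instance, by separately controlling the increments of the coarse variable $\lfloor \xi_{f,0}/(2\pi)\rfloor$ (monotone by (i)) and the fractional part, or by a sandwich coupling at a fine time partition in the spirit of Proposition \ref{prop:stochdom}.
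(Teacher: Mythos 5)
Parts (i) and (ii) of your proposal are correct and take essentially the paper's route: absorption at the levels of $2\pi\Z$ where the noise coefficient vanishes, and, for (ii), the identity $e^{-i\alpha}-e^{-i\beta}=-2ie^{-i(\alpha+\beta)/2}\sin((\alpha-\beta)/2)$ plus L\'evy's characterization and weak uniqueness. Nothing to add there.

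The gap is in (iii), and while you have correctly sensed that your constants will not close, your proposed repair points in the wrong direction. The induction should not be run at the levels $a,2a,\dots,ka$: these are not absorbing, so after hitting $ja$ the process can recede, and the only way to restart a fresh copy there is the sandwich coupling of Proposition \ref{prop:stochdom}, which costs an additive $2\pi$ at every step and makes the induction vacuous whenever $a\le 2\pi$. The renewal structure lives at the levels $2\pi n$. Optional stopping of the martingale $\xi_{f,0}(t)-\int_0^t f\,ds$ at $\tau_{2\pi}\wedge T$, combined with $\xi_{f,0}\ge 0$ from (i), gives $2\pi\, P(\tau_{2\pi}\le T)\le E\,\xi_{f,0}(\tau_{2\pi}\wedge T)\le \int_0^T f$; this is where the factor $2\pi$ enters, not from any finer analysis of the fractional part. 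By the strong Markov property at $\tau_{2\pi}$ together with (ii), the process $\xi_{f,0}(\tau_{2\pi}+\cdot)-2\pi$ is an exact fresh copy with the remaining drift, and iterating yields $P\big(\lfloor \xi_{f,0}(T)\rfloor_{2\pi}\ge 2\pi n\big)\le \big(\tfrac{1}{2\pi}\int_0^T f\big)^n$, which is the bound of Proposition 9 of \cite{BVBV} that the paper's sketch is invoking. You should also be aware that this argument only gives $P(\xi_{f,0}(T)\ge ka)\le \big(\tfrac{1}{2\pi}\int_0^T f\big)^{\lfloor ka/(2\pi)\rfloor}$, not the displayed $2\big(\tfrac{\int_0^T f}{2\pi a}\big)^k$, and the displayed constants cannot in fact hold for all $a$: taking $f\equiv\lambda$ constant, $k=1$ and $a=\tfrac12\int_0^T f$, the right-hand side of (\ref{expbnd}) is $2/\pi$ while the left-hand side tends to $1$ as $\lambda\to\infty$ by the law of large numbers implicit in Theorem \ref{thm:pathschr}. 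So the factor you were chasing is not there to be found; the $2\pi$-level version above is what is provable and what the paper's applications actually require.
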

\begin{proof}[Sketch of the proof] The first statement follows  from the strong Markov property and the fact that in (\ref{xisine2}) the noise term vanishes if $\wt \xi_{f,c}\in 2\pi \Z$, but the drift is always non-negative. The first part of  (ii) follows
 by considering the difference  of the SDEs for $\xi_{f,a}$, $\xi_{g,b}$ and noting that $(e^{-i \xi_{f,a}}-e^{-i \xi_{g,b}})(dB_1+idB_2)$ has the same distribution as $(e^{-i (\xi_{f,a}-\xi_{g,b})}-1)(dB_1+idB_2)$. The second part of (ii) follows from the first statement. 
% 
% 
%  (iii) follows from the first two statements. 
%and (iv) follows from (iii) and the fact that $\xi_{f,0}\eqd \xi_{f,2\pi}-2\pi$. 
Finally, (\ref{expbnd}) follows from the Markov inequality and the strong Markov property. The existence of the limit is proved in Proposition 9 of \cite{BVBV}. (See that proposition for more details on the proof.)
\end{proof}
%Introduce the notations
%\begin{align}
%\lfloor y \rfloor_{2\pi}= \max\{ 2\pi k: 2\pi k \leq y\}, \qquad  \lceil y \rceil_{2\pi}= \min\{ 2\pi k: 2\pi k \geq y\}.
%\end{align}
%
%\begin{corollary}
%For any fixed $\lambda>0$ the  process $\lfloor  \wt \alpha_\lambda(t) \rfloor_{2\pi}$ is monotone increasing and its jump times occur according to an i.i.d.~renewal process. 
%\end{corollary}
%\begin{proof}This follows easily from Proposition using the constant 1 function as the drift $f$. By the strong Markov property the jump times are distributed according to the hitting time of $2\pi$ of the diffusion $\wt \alpha_\lambda$. 
%\end{proof}
%\begin{corollary}
%Let $0=t_0<t_1<t_2<\dots<t_n=T$ and fix a $\lambda>0$. Consider the solution $\wt \alpha_\lambda(t)$ of (\ref{schrSDE0}) on $[0,T]$. Let $\hat \alpha_i(s), 0\le s\le t_i-t_{i-1}$ be defined as the strong solution of (\ref{schrSDE0}) but with the Brownian motion $B_{s+t_{i-1}}$ used for the driving noise. Then $\hat \alpha_i(t_i-t_{i-1})$ are independent random variables and
%\begin{align}
% \lfloor \hat \alpha_i(t_i-t_{i-1}) \rfloor_{2\pi} \le_d \lfloor \wt \alpha_\lambda(t_i)\rfloor_{2\pi}-\lfloor \wt \alpha_\lambda(t_{i-1})\rfloor_{2\pi}\le_d \lceil \hat \alpha_i(t_i-t_{i-1}) \rceil_{2\pi}
%\end{align}
%\end{corollary}
%\begin{proof}
%ss
%\end{proof}
%

Our main theorems will follow from the following path level large deviations.  \begin{theorem}\label{thm:pathsineb}
Fix $\beta>0$ and let $\alpha_\lambda(t)$ be the process defined in (\ref{sineSDE1}) or (\ref{sineSDE2}). Then the sequence of rescaled processes $(\tfrac{\alpha_\lambda(t)}{\lambda}, t\in [0,\infty))$ satisfies a large deviation principle on $C[0,\infty)$ with scale $\lambda^2$ and good rate function $\Pathsine$. The rate function $\Pathsine$ is defined as 
\begin{equation}
\Pathsine(g) = \int_0^\infty \drift^2(t) \ldp \left( g'(t)/\drift(t)\right)dt, \quad \textup{with} \quad  \drift(t)=\drift_\beta(t)=\tfrac{\beta}{4}e^{-\tfrac{\beta}{4}t} \notag
\end{equation}
in the case where $g(0)=0$, $g'$ exists a.e. and is non-negative. In all other cases  $\Pathsine(g)$ is defined as $\infty$.
\end{theorem}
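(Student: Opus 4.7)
The plan is to reduce the path LDP on $[0,\infty)$ to a family of path LDPs on short time intervals, on each of which the drift $\lambda\drift(t)$ is essentially constant, and then to patch them together via the Schr\"odinger path LDP from Section \ref{s:pathschr}. First I would fix a horizon $T$ and a partition $0=t_0<t_1<\cdots<t_n=T$ fine enough that $\drift$ is nearly constant on each subinterval $[t_{i-1},t_i]$. On such a subinterval, Proposition \ref{prop:SDEprop}(ii) sandwiches the piece of $\alpha_\lambda$ between two constant-drift diffusions with drifts $\lambda\overline c_i$ and $\lambda\underline c_i$, where $\overline c_i=\sup_{[t_{i-1},t_i]}\drift$ and $\underline c_i=\inf_{[t_{i-1},t_i]}\drift$; after subtracting the multiple of $2\pi$ supplied by the floor construction of Proposition \ref{prop:renewal}, each sandwich is a time-translated Schr\"odinger diffusion obeying (\ref{schrSDE0}). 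The shifts by multiples of $2\pi$ introduced along the way cost only $O(1)$ per subinterval, which is negligible compared to the scale $\lambda$ of $\alpha_\lambda$.

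Next I would obtain a finite-dimensional LDP for $(\alpha_\lambda(t_i)/\lambda)_{i=0}^n$. The coupling of Proposition \ref{prop:stochdom}, applied on each subinterval of the sandwich, expresses the increments $\lfloor\alpha_\lambda(t_i)\rfloor_{2\pi}-\lfloor\alpha_\lambda(t_{i-1})\rfloor_{2\pi}$ in terms of independent variables with the law of a Schr\"odinger hitting-type increment running for time $t_i-t_{i-1}$ with drift $\lambda c_i$. Applying the Schr\"odinger path LDP (scale $(\lambda c_i)^2$, integrand $\ldp(h')$) to each piece and combining by independence yields a finite-dimensional LDP whose rate function on the increments $\Delta g_i=g(t_i)-g(t_{i-1})$ is, after rescaling $(\lambda c_i)^2=\lambda^2 c_i^2$, the Riemann sum $\sum_i(t_i-t_{i-1})\,c_i^2\,\ldp\!\left(\tfrac{\Delta g_i/(t_i-t_{i-1})}{c_i}\right)$. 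As the mesh of the partition tends to zero, $c_i\to\drift(t)$ and $\Delta g_i/(t_i-t_{i-1})\to g'(t)$, and the Riemann sums converge to $\int_0^T\drift^2(t)\,\ldp(g'(t)/\drift(t))\,dt$. The path LDP on $C[0,T]$ then follows from the Dawson--G\"artner projective-limit theorem combined with exponential tightness, which in turn comes from the uniform exponential tail bound (\ref{expbnd}) of Proposition \ref{prop:SDEprop}(iii).

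Finally, to extend from $[0,T]$ to the full time axis I would use that $\int_T^\infty\drift(s)\,ds=e^{-\beta T/4}$ is arbitrarily small for large $T$, combined with the exponential tail of Proposition \ref{prop:SDEprop}(iii) applied to the tail process, to show that $\sup_{t\ge T}|\alpha_\lambda(t)-\alpha_\lambda(T)|/\lambda$ is exponentially negligible at scale $\lambda^2$. A standard approximation argument then upgrades the $[0,T]$ LDP to one on $C[0,\infty)$. Lower semicontinuity of $\Pathsine$ and compactness of its sub-level sets (so that it is a good rate function) will follow from the convexity and superlinear growth of $\ldp$ to be established in Section \ref{s:goodrate}. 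The main obstacle is making the two-sided sandwich estimate uniform across the partition while simultaneously passing to the Riemann-sum limit: one must show that $c\mapsto c^2\,\ldp(\cdot/c)$ behaves continuously enough that $\overline c_i$ and $\underline c_i$ squeeze to the same integral rate, and that the cumulative $O(1)$ truncation errors across $n$ subintervals remain negligible at scale $\lambda^2$ uniformly as the partition is refined.
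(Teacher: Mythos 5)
Your overall architecture --- piecewise-constant approximation of the drift, reduction to the constant-drift diffusion $\wt \alpha_\lambda$ of (\ref{schrSDE0}) via the couplings of Propositions \ref{prop:renewal}, \ref{prop:stochdom} and \ref{prop:SDEprop}, truncation at a finite horizon $T$, and deferral of the goodness of $\Pathsine$ to Section \ref{s:goodrate} --- matches the paper's, and the route through finite-dimensional marginals plus Dawson--G\"artner is a legitimate alternative to the paper's direct sup-norm estimates (modulo checking, via convexity of $\ldp$, that the projective limit of the Riemann-sum rate functions is the claimed integral). The genuine gap is that in both places where you need superexponential control you invoke the tail bound (\ref{expbnd}) of Proposition \ref{prop:SDEprop}(iii), and that bound cannot produce decay at scale $e^{-\lambda^2 R}$. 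Concretely, to bound $P(\alpha_\lambda(\infty)-\alpha_\lambda(T)\ge \delta\lambda)$ one takes $f=\lambda\drift$ on $[T,\infty)$, so $\int f = \lambda e^{-\beta T/4}$ and $ka=\delta\lambda$; the bound (\ref{expbnd}) becomes $2\bigl(e^{-\beta T/4}k/(2\pi\delta)\bigr)^{k}$, whose optimum over $k$ is $\exp\bigl(-2\pi\delta e^{\beta T/4}/e\bigr)$ --- a constant independent of $\lambda$. The same computation shows (\ref{expbnd}) cannot control the modulus of continuity at scale $\lambda^2$, so your exponential tightness claim is unsupported, and Dawson--G\"artner would then only deliver an LDP in the pointwise projective-limit topology rather than on $C[0,T]$ with the uniform norm.

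Both defects are repaired by the sharper hitting-time estimates, which is where the real work of the paper lies. Exponential tightness requires Lemma \ref{lem:tailbnd}, i.e.\ $P(\lceil\wt\alpha_\lambda(t)\rceil_{2\pi}\ge qt\lambda)\le e^{-\lambda^2 t\,\ldp(q)+O(\lambda)}$ with $\ldp(q)\asymp q^2\log^2 q$, so that $t\,\ldp(\delta/t)\to\infty$ as $t\to 0$; this comes from the Cameron--Martin--Girsanov analysis of the hitting time $\tau_\lambda$ in Section \ref{s:hitting}, not from the crude martingale bound (\ref{expbnd}). For the tail beyond a fixed $T$, the paper's Proposition \ref{tailbound} is a nontrivial multi-scale argument: it splits $[T,\infty)$ into blocks $[T_k,T_{k+1}]$ with $T_k=\tfrac{k(k+1)}{2}\theta T$, allots block $k$ a budget of order $\delta\lambda k^{-(1+\nu)}$, applies the superquadratic bound (\ref{uptail}) on each block, and only falls back on (\ref{expbnd}) for the final piece, where the elapsed time has already grown to order $\lambda$ and the remaining drift mass is $e^{-c\lambda}$. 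Without some such decomposition your step ``$\sup_{t\ge T}|\alpha_\lambda(t)-\alpha_\lambda(T)|/\lambda$ is exponentially negligible at scale $\lambda^2$'' does not follow.
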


%\begin{remark}
%\label{gconverges}
%If $g$ does not converge to a finite limit then $I(g)=\infty$.  
%\diane{the proof of this is now in the appendix... }\benedek{This should come later}
%\end{remark}

\begin{theorem}\label{thm:pathschr}
Fix $T>0$ and let $\wt \alpha_\lambda(t)$ be the process defined in (\ref{schrSDE0}). Then the sequence of rescaled processes $(\tfrac{\wt \alpha_\lambda(t)}{\lambda}, t\in [0,T])$ satisfies a large deviation principle on $C[0,T]$ with scale $\lambda^2$ and good rate function $\Pathschr$. The rate function is defined as 
\begin{equation}
\Pathschr(g)=\int_0^T \ldp\left(g'(t)\right) dt\nonumber
\end{equation}
in the case where $g(0)=0$, $g'$ exists a.e. and is non-negative, and  $\Pathschr(g)=\infty$ in all other cases.
\end{theorem}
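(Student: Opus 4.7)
The plan is to establish the path level LDP by the standard two-step strategy: first prove a finite-dimensional LDP for the vector of values of $\wt \alpha_\lambda/\lambda$ on a fixed partition, and then upgrade to path space via exponential tightness and the Dawson--G\"artner projective limit theorem.

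For the finite-dimensional step, fix a partition $0=t_0<t_1<\dots<t_n=T$ and apply Proposition \ref{prop:stochdom}. This sandwiches $\lfloor \wt\alpha_\lambda(t_i)\rfloor_{2\pi}-\lfloor \wt\alpha_\lambda(t_{i-1})\rfloor_{2\pi}$ between $\lfloor \xi_i\rfloor_{2\pi}$ and $\lfloor \xi_i\rfloor_{2\pi}+2\pi$, where $\xi_1,\dots,\xi_n$ are \emph{independent} with $\xi_i\eqd \wt\alpha_\lambda(t_i-t_{i-1})$. After dividing by $\lambda$ the sandwich discrepancy is $O(1/\lambda)$, which is exponentially negligible at scale $\lambda^2$; the rescaled increments therefore obey the same LDP as independent copies of $\wt\alpha_\lambda(\Delta_i)/\lambda$, with $\Delta_i=t_i-t_{i-1}$. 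The input I would need is a one-dimensional marginal LDP: for each fixed $s>0$, the family $\wt\alpha_\lambda(s)/\lambda$ satisfies an LDP with scale $\lambda^2$ and rate function $s\,\ldp(q/s)$, supported on $q\ge 0$. This is the output of the hitting time analysis in Section \ref{s:hitting}: by Proposition \ref{prop:renewal} the event $\{\wt\alpha_\lambda(s)\ge 2\pi k\}$ corresponds to $k$ i.i.d.~hitting times $\tau_\lambda$ summing to at most $s$, and a Cameron--Martin--Girsanov change of measure provides the required sharp exponential asymptotics; the scaling form $s\,\ldp(\cdot/s)$ reflects the diffusive scaling of (\ref{schrSDE0}) under $t\mapsto s\,t$, $\lambda\mapsto \lambda/s$. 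Given the marginal LDP, Cram\'er's theorem for independent sums yields the finite-dimensional LDP for $\lambda^{-1}(\wt\alpha_\lambda(t_1),\dots,\wt\alpha_\lambda(t_n))$ with rate function $\sum_{i=1}^n \Delta_i\,\ldp\!\bigl((x_i-x_{i-1})/\Delta_i\bigr)$, where $x_0=0$, and the constraint $x_i\ge x_{i-1}$ is enforced because the sandwich forces nonnegative increments up to $O(1/\lambda)$.

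To pass from finite-dimensional to path level, I would verify exponential tightness of $\wt\alpha_\lambda/\lambda$ in $C[0,T]$ at scale $\lambda^2$. Using Proposition \ref{prop:SDEprop}(iii) on a small subinterval, applied with the constant drift $f=\lambda$, gives a geometric right-tail bound of the form $P(\wt\alpha_\lambda(t)-\wt\alpha_\lambda(s)\ge k a\lambda)\le 2((t-s)/(2\pi a))^k$, which through Proposition \ref{prop:stochdom} chains into an exponential estimate on the modulus of continuity $\sup_{|t-s|\le\delta}(\wt\alpha_\lambda(t)-\wt\alpha_\lambda(s))/\lambda$; the left-tail is automatic up to $O(1/\lambda)$ since $\lfloor \wt\alpha_\lambda\rfloor_{2\pi}$ is monotone. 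With exponential tightness and a consistent finite-dimensional LDP along any refining sequence of partitions, Dawson--G\"artner yields a good rate function on $C[0,T]$ which, on absolutely continuous paths, equals $\sup_{\pi}\sum_i \Delta_i\,\ldp(\Delta g_i/\Delta_i)$ over partitions $\pi$. By Proposition \ref{prop:ldpconvex}, $\ldp$ is convex, so Jensen's inequality makes this supremum nondecreasing under refinement and identifies the limit with $\int_0^T \ldp(g'(t))\,dt$; on non-absolutely-continuous paths, or those with $g'<0$ on a set of positive measure, or $g(0)\ne 0$, the supremum diverges, matching $\Pathschr(g)=\infty$.

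The main obstacle is the marginal LDP for $\wt\alpha_\lambda(s)/\lambda$ with the precise rate function $s\,\ldp(\cdot/s)$; the analytic machinery of the Cameron--Martin--Girsanov shift, together with the identification of the optimal tilt in terms of the elliptic-integral function $\ldp(q)=(2-a)/8-E(a)/(4K(a))$, is the technically delicate input. Once this marginal LDP is established (deferred to Section \ref{s:hitting}), the remainder of the argument is a clean application of Cram\'er, exponential tightness, and Dawson--G\"artner, together with the convexity of $\ldp$ to match the supremum-over-partitions formula with the integral rate function in the theorem's statement.
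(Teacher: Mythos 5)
Your architecture --- a finite-dimensional LDP from the independent-increment sandwich of Proposition \ref{prop:stochdom}, then exponential tightness plus Dawson--G\"artner, then identification of the projective-limit rate function with $\int_0^T\ldp(g'(t))\,dt$ via convexity and superlinearity of $\ldp$ --- is genuinely different from the paper's, which never passes through a finite-dimensional LDP: the paper works directly with the piecewise-linear projection $\pi_N$, bounds $P(\pi_N(\wt\alpha_\lambda/\lambda)\in K^\delta)$ by an explicit Chernoff estimate on $\sum_i\tfrac{T}{N}\ldp(\Delta\wt\alpha_i)$ using Lemma \ref{lem:expmom}, and proves the lower bound by reducing to tubes around straight lines (Proposition \ref{prop:lowerlin}) via the renewal representation and the Girsanov lower bound (\ref{taulower}). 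Both routes rest on the same inputs (Propositions \ref{prop:renewal} and \ref{prop:stochdom} and the hitting-time analysis of Section \ref{s:hitting}), and yours is viable in principle; it trades the paper's hands-on estimates for standard LDP machinery at the cost of having to formulate and prove the one-dimensional marginal LDP as a separate intermediate statement, which the paper only establishes implicitly (upper tails via Lemma \ref{lem:tailbnd}, lower bounds via tubes). Your heuristic that the scaling form $s\,\ldp(\cdot/s)$ comes from a diffusive rescaling of (\ref{schrSDE0}) is not quite right (the noise coefficient does not rescale that way); the form really comes from the renewal duality between the counting process and sums of i.i.d.\ copies of $\tau_\lambda$, but since you defer the proof to Section \ref{s:hitting} this is cosmetic.

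There is, however, a concrete gap in your exponential tightness step. Proposition \ref{prop:SDEprop}(iii) with constant drift $f=\lambda$ on an interval of length $\delta$ gives $P(\wt\alpha_\lambda(\delta)\ge ka)\le 2\left(\lambda\delta/(2\pi a)\right)^k$; writing the threshold as $\eta\lambda=ka$, the bound becomes $2\left(\delta k/(2\pi\eta)\right)^k$, whose infimum over $k$ is of order $e^{-2\pi\eta/(e\delta)}$ --- a constant independent of $\lambda$. This can never be $\le e^{-\lambda^2 R}$, so it cannot establish exponential tightness at scale $\lambda^2$; that bound is a fixed-$\lambda$ tail estimate, not a large deviation estimate. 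The estimate you actually need is the Chernoff bound derived from the Girsanov exponential-moment bound on the hitting time, i.e.\ Lemma \ref{lem:tailbnd}: $P(\lceil\wt\alpha_\lambda(\delta)\rceil_{2\pi}\ge\eta\lambda)\le e^{-\lambda^2\delta\,\ldp(\eta/\delta)+O(\lambda)}$, combined with the superquadratic growth $\ldp(x)\asymp x^2\log^2x$ from (\ref{ldptail1}), which makes $\delta\,\ldp(\eta/\delta)\to\infty$ as $\delta\to0$. This is exactly the estimate the paper uses to control $P(\|\wt\alpha_\lambda-\pi_N\wt\alpha_\lambda\|\ge\delta\lambda)$ in (\ref{1st_term}). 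With that substitution, your chaining over the $T/\delta$ subintervals via Proposition \ref{prop:stochdom} (the left tail being free by monotonicity of $\lfloor\wt\alpha_\lambda\rfloor_{2\pi}$) does give exponential tightness, and the rest of your argument goes through.
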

In order to prove Theorem \ref{thm:pathschr} we observe that $\tfrac{\wt \alpha_\lambda(t)}{\lambda}$ is close to $\tfrac{\lfloor \wt \alpha_\lambda(t)\rfloor_{2\pi}}{\lambda}$ for large $\lambda$ and by Proposition \ref{prop:renewal} we only need to analyze the hitting time $\tau_\lambda$ to understand the evolution of $\lfloor \wt \alpha_\lambda(t)\rfloor_{2\pi}$. The proof of Theorem \ref{thm:pathsineb} will follow along similar lines after approximating the drift in (\ref{sineSDE1}) with a piecewise constant function.

\section{Analysis of the hitting time $\tau_\lambda$}\label{s:hitting}

The following proposition summarizes our bounds on the relevant hitting times.
\begin{proposition}\label{prop:tau}
Let $\tau_{\lambda}=\inf \{t: \wt \alpha_\lambda(t)\ge 2\pi \}$ where $\wt \alpha_\lambda$ is the solution of  (\ref{schrSDE0}) and fix $a<1$. Then  we have 
\begin{align}\label{expmombnd}
E e^{ \frac{\lambda^2 a}{8} \tau_{\lambda}- \frac{\lambda(|a|\wedge  \sqrt{|a|})}{4} \tau_{\lambda}} \leq  e^{-\lambda \II(a)}.
\end{align}
Let $t_a=4 K(a)$ and fix  $0<\eps<|t_a-2\pi|$. Then  we have 
\begin{align}\label{taulower}
P(\lambda \tau_{\lambda} \in [t_a- \varepsilon,t_a+\varepsilon] )&\geq A(\varepsilon,\lambda,a) e^{ -\lambda(\II(a)+\tfrac{a t_a}{8}) -  \lambda  \frac{|a|\eps}{8} -\lambda \frac{|a|}{2}(t_a+\varepsilon)}
\end{align}
where $\lim\limits_{\lambda\to \infty} A(\eps, \lambda, a)=1$  for fixed $a, \eps$. 
\end{proposition}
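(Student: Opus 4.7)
The plan is to use a Cameron-Martin-Girsanov tilt whose zero-noise trajectory reaches $2\pi$ at exactly time $t_a/\lambda$. Set $v(\alpha):=\sqrt{1-a\sin^2(\alpha/2)}$ and let $Q$ be the measure with $dQ/dP|_{\mathcal{F}_t}=\exp\bigl(\int_0^t\Theta(\wt\alpha_s)dB_s-\tfrac12\int_0^t\Theta(\wt\alpha_s)^2 ds\bigr)$, where $\Theta(\alpha):=\lambda(v(\alpha)-1)/(2\sin(\alpha/2))$ (continuous at $\alpha=0,2\pi$ since $v-1=-a\sin^2(\alpha/2)/(1+v)$). Under $Q$, the SDE becomes $d\wt\alpha=\lambda v(\wt\alpha)\,dt+2\sin(\wt\alpha/2)d\wt B$, whose zero-noise version satisfies $\int_0^{2\pi}d\alpha/(\lambda v(\alpha))=4K(a)/\lambda=t_a/\lambda$, matching the target hitting time.

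The computational heart is an It\^o rewriting of $\log(dQ/dP)$. Let $G(\alpha):=-\tfrac{\lambda a}{4}\int_0^\alpha(1+v(\beta))^{-1}d\beta$, so that $G'(\alpha)=\Theta(\alpha)/(2\sin(\alpha/2))$. Applying It\^o's formula to $G(\wt\alpha_t)$ under $P$ and rearranging yields
\[
\int_0^t\Theta\, dB-\tfrac12\int_0^t\Theta^2 ds=G(\wt\alpha_t)-\int_0^t\bigl[\lambda G'+2\sin^2(\wt\alpha_s/2)\bigl((G')^2+G''\bigr)\bigr](\wt\alpha_s)ds.
\]
Using $a\sin^2(\alpha/2)=1-v^2$, a direct computation gives the \emph{exact} identity $\lambda G'+2\sin^2(\alpha/2)(G')^2=-\lambda^2 a/8$, while $2\sin^2(\alpha/2)G''(\alpha)=\lambda R(\alpha)$ with $R(\alpha)=-a^2\sin^2(\alpha/2)\sin\alpha/(8v(1+v)^2)$. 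An elementary case analysis (for $0<a<1$ use $a\sin^2(\alpha/2)\le 1$ and $(1+v)^2\ge 1$; for $a<0$ use $v^3\ge (|a|\sin^2(\alpha/2))^{3/2}$ to cancel the $a^2$ in the numerator) produces the pointwise bound $|R(\alpha)|\le(|a|\wedge\sqrt{|a|})/4$. Independently, the substitution $(v-1)/\sin^2(\alpha/2)=-a/(1+v)$, symmetry about $\pi/2$, and the identity $\II'(a)=-K(a)/2$ (verified from the standard derivative formulas $K'(a)=(E-(1-a)K)/(2a(1-a))$ and $E'(a)=(E-K)/(2a)$) combine to give $G(2\pi)=\lambda\II(a)$.

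For the upper bound (\ref{expmombnd}), these identities show $M_t:=\exp\bigl(G(\wt\alpha_t)+\tfrac{\lambda^2 a}{8}t-\lambda\int_0^tR(\wt\alpha_s)ds\bigr)$ is a positive local martingale, hence a supermartingale. Optional stopping at $\tau_\lambda$ (justified by Fatou, since $\tau_\lambda<\infty$ a.s.~and $M\ge 0$) gives $E[M_{\tau_\lambda}]\le M_0=1$; dividing by $e^{G(2\pi)}=e^{\lambda\II(a)}$ and using $|R|\le(|a|\wedge\sqrt{|a|})/4$ to replace $\int R$ by a constant lower bound in the exponent yields (\ref{expmombnd}). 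For the lower bound (\ref{taulower}), write $P(A)=E_Q[\ind_A\,dP/dQ]$ for $A=\{\lambda\tau_\lambda\in[t_a-\eps,t_a+\eps]\}$. The It\^o rewriting is a sample-path identity $\log(dP/dQ)=-\lambda\II(a)-\tfrac{\lambda^2 a}{8}\tau_\lambda+\lambda\int_0^{\tau_\lambda}R(\wt\alpha_s)ds$, so combining $\tau_\lambda\le(t_a+\eps)/\lambda$ on $A$ with the bound on $R$ produces the stated exponent. It then remains to show $Q(A)\to 1$ as $\lambda\to\infty$: under $Q$ the time-rescaled process $s\mapsto\wt\alpha(s/\lambda)$ solves $d\wt\alpha=v(\wt\alpha)ds+(2/\sqrt\lambda)\sin(\wt\alpha/2)dW$, and standard small-noise SDE estimates force uniform convergence on $[0,t_a+\eps]$ to the deterministic path hitting $2\pi$ at $t_a$, giving $A(\eps,\lambda,a)=Q(A)\to 1$.

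The main obstacle is the residual bound $|R(\alpha)|\le(|a|\wedge\sqrt{|a|})/4$ in the regime $a<0$: a naive estimate yields $|R|\sim a^2$, and one must exploit the full growth of $v(1+v)^2$ in the denominator to bring the bound down to $\sqrt{|a|}/4$. Everything else is careful bookkeeping of the Girsanov change of measure and the supermartingale / optional stopping arguments.
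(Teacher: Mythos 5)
Your proof is correct and is essentially the paper's argument in different coordinates: your tilt to drift $\lambda\sqrt{1-a\sin^2(\wt\alpha/2)}$ is exactly the paper's comparison diffusion $Y_{\lambda,a}$ (drift $\tfrac{\lambda}{2}\sqrt{\cosh^2 y-a}+\tfrac12\tanh y$) pulled back through $\cosh X=1/\sin(\wt\alpha/2)$, and the exact cancellation to $-\lambda^2 a/8$, the boundary term $\lambda\II(a)$, the residual bound $(|a|\wedge\sqrt{|a|})/4$, the Fatou/optional-stopping upper bound, and the small-noise ODE limit giving $Q(A)\to1$ all appear in the same roles in the paper's proof. The one minor advantage of staying in the $\wt\alpha$ coordinate is that your Girsanov kernel $\Theta$ is globally bounded by $\lambda|a|/2$, so Novikov is immediate and you sidestep the explosion-time technicalities the paper handles via \cite{KaratzasRuf2013}.
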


Our first step is a change of variables in (\ref{schrSDE0}). We introduce $X_\lambda(t)=\log(\tan(\wt \alpha_\lambda(t)/4))$, by It\^o's formula this satisfies the SDE
\begin{align}\label{SDEX}
dX_\lambda= \frac{\lambda}{2} \cosh X_\lambda \ dt+ \frac{1}{2}\tanh X_\lambda \ dt + dB_t, \qquad X_\lambda(0)=-\infty.
\end{align}
The distribution of the hitting time of $2\pi$ for $\wt \alpha_\lambda(t)$ is the same as that of the hitting time of $\infty$ for $X_\lambda$. With a small abuse of notation from now on we will use the notation  $\tau_\lambda$ for the blow-up time of $X_\lambda(t)$, i.e.~$\tau_\lambda =\sup\{t: X_\lambda(t)<\infty \}$. In order to study $\tau_\lambda$ we will introduce a similar diffusion with  a modified drift. Let $a<1$ and consider
\begin{equation}
\label{SDEY}
d Y_{\lambda,a}= \frac{\lambda}{2}\sqrt{ \cosh^2 Y_{\lambda,a} -a}\ dt+ \frac{1}{2} \tanh Y_{\lambda,a} dt+dB_t, \qquad Y_{\lambda,a}(0)=-\infty.
\end{equation}
%\benedek{comment on $-\infty$ starting point}\diane{what exactly did you want to say about it?}. 
To prove Proposition \ref{prop:tau} we will choose  an appropriate $a$ and compare $X_\lambda$ with the diffusion $Y_{\lambda,a}$ using the Cameron-Martin-Girsanov formula. Introduce the following notations for the drifts:
\begin{align}
f_\lambda(x)=\frac{\lambda}{2} \cosh x+ \frac{1}{2}\tanh x, \qquad h_{\lambda,a}(y)= \frac{\lambda}{2}\sqrt{ \cosh^2 y- a}\ + \frac{1}{2} \tanh y.\notag
\end{align}
Note, that we have the uniform bound
\begin{align}\label{driftbnd}
\left| f_\lambda(x)-h_{\lambda, a}(x) \right|\le \tfrac12 \lambda |a|. 
\end{align}
The following proposition will be our main tool for our estimates.
\begin{proposition}\label{prop:CMG} 
Fix $a<1$ and consider $X=X_\lambda$ and $Y=Y_{\lambda,a}$. Denote by $\tau_\lambda$ and $\tau_{Y,\lambda}$ the blowup times of $X$ and $Y$. 
 Then for any $s>0$ we have
\begin{align}\label{eq:CMG}
P(\lambda \tau_\lambda>s)=E\left[\ind(\lambda \tau_{Y,\lambda}>s) e^{-G_{s/\lambda}(Y)}\right],
\end{align}
and
\begin{align}\label{expid}
1=E e^{-G_{ \tau \wedge s/\lambda }(Y)}=E e^{G_{ \tau \wedge s/\lambda}(X)},
\end{align}
where
\begin{equation}\notag
G_s(X)=  \int_0^s h_{\lambda,a}(X(t))-f_\lambda(X(t)) dX- \frac{1}{2}\int_0^s (h_{\lambda,a}^2(X)-f_\lambda^2(X))dt.
\end{equation}
\end{proposition}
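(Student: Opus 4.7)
The plan is to recognize $e^{G_s(X)}$ as the standard Girsanov density that changes the drift of $X_\lambda$ from $f_\lambda$ to $h_{\lambda,a}$. Substituting $dX = f_\lambda(X)\,dt+dB$ into the Itô integral in the definition of $G_s(X)$ and using the elementary identity $(h-f)f - \tfrac12(h^2-f^2) = -\tfrac12(h-f)^2$, one obtains
\begin{align*}
G_s(X) = \int_0^s (h_{\lambda,a} - f_\lambda)(X_t)\,dB_t - \tfrac12 \int_0^s (h_{\lambda,a} - f_\lambda)^2(X_t)\,dt.
\end{align*}
By the uniform bound (\ref{driftbnd}), $|h_{\lambda,a}-f_\lambda|\le \tfrac12\lambda|a|$, so Novikov's condition holds on every bounded time interval and $e^{G_\cdot(X)}$ is a true exponential martingale before explosion.

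Girsanov's theorem then furnishes a probability measure $Q$ with $dQ/dP|_{\mathcal{F}_t}=e^{G_t(X)}$, under which $\tilde B_t := B_t - \int_0^t (h_{\lambda,a}-f_\lambda)(X_r)\,dr$ is a Brownian motion and $X$ solves the SDE (\ref{SDEY}). In other words, under $Q$ the canonical process $X$ has the same law as $Y_{\lambda,a}$ under $P$, and the blow-up time of $X$ under $Q$ is distributed as $\tau_{Y,\lambda}$. For the event $A=\{\tau_\lambda>s/\lambda\}$ this gives $P(A) = E^Q[\ind_A\,e^{-G_{s/\lambda}(X)}]$, and translating the right side to the law of $Y$ yields
\begin{align*}
P(\lambda\tau_\lambda>s) = E\!\left[\ind(\lambda\tau_{Y,\lambda}>s)\,e^{-G_{s/\lambda}(Y)}\right],
\end{align*}
which is (\ref{eq:CMG}). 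The identity $E\,e^{G_{\tau_\lambda\wedge s/\lambda}(X)}=1$ in (\ref{expid}) is optional stopping for the bounded stopping time $\tau_\lambda\wedge s/\lambda \le s/\lambda$ applied to the exponential martingale $e^{G_\cdot(X)}$; the companion identity $E\,e^{-G_{\tau_{Y,\lambda}\wedge s/\lambda}(Y)}=1$ follows from the tautology $E^Q[e^{-G_\sigma(X)}]=E[e^{-G_\sigma(X)}e^{G_\sigma(X)}]=1$ for any bounded stopping time $\sigma$, combined with the equality in law of $X$ under $Q$ and $Y$ under $P$.

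The main technical obstacle is the infinite starting point $X_\lambda(0)=Y_{\lambda,a}(0)=-\infty$: Girsanov and optional stopping are classically stated for processes with finite initial data. I would handle this by a standard localization, carrying out the argument for the truncated processes $X^M,Y^M$ started at $-M$ and stopped at the first exit time $\tau_K$ from $(-\infty,K]$; on this event the drift difference, the exponential density, and all stochastic integrals are uniformly bounded, so Girsanov and optional stopping apply without qualification. Letting first $K\to\infty$ to reach the blow-up and then $M\to\infty$, using that (\ref{SDEX}) admits an entrance law from $-\infty$ so that laws converge continuously in the initial condition after any positive time, recovers the full statements (\ref{eq:CMG}) and (\ref{expid}) via dominated convergence.
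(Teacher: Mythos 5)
Your proposal is correct and follows the same route as the paper: the paper's proof simply invokes the Cameron--Martin--Girsanov formula for diffusions with explosion, noting that the uniform drift bound (\ref{driftbnd}) gives the Novikov condition so that $e^{G_{\tau\wedge s}(X)}$ is a positive martingale, and refers to \cite{KarShr}, \cite{KaratzasRuf2013} for the standard steps. You have filled in exactly those standard steps (the identity $(h-f)f-\tfrac12(h^2-f^2)=-\tfrac12(h-f)^2$, the change of measure, optional stopping, and the localization handling the entrance from $-\infty$), all of which are sound.
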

\begin{proof}
This is just the Cameron-Martin-Girsanov formula for diffusions with explosion. Note, that because of (\ref{driftbnd}) the process $e^{G_{\tau\wedge s}(X)}$ satisfies the Novikov criterion and it is a positive martingale. From this the usual steps of the proof can be completed (see e.g. \cite{KarShr}, \cite{KaratzasRuf2013}).
\end{proof}

\begin{proof}[Proof of Proposition \ref{prop:tau}]
We first estimate the Girsanov exponent  
\begin{align*}
G_s(X)&=   \frac{\lambda}{2} \int_0^s   (\sqrt{ \cosh^2 X-a}- \cosh X) d X\\
&\hspace{2cm} - \frac{1}{2}\int_0^s\left(  -\frac{\lambda^2}{4} a+  \frac{\lambda}{2}( \sqrt{ \cosh^2 X-a}- \cosh X)\tanh X \right)\ dt .
\end{align*}
Applying It\^o's formula for  $\theta(X)=h_{\lambda,a}(X)-f_\lambda(X)$ we have that 
$
\int_0^t \theta(X)dX= \int_{X_0}^{X_t} \theta(x)dx- \frac{1}{2} \int_0^s \theta'(X)dt.
$
 This gives us
%insert integration by parts formula here
\begin{align*}
G_s(X) &=  \frac{\lambda^2 a}{8} s+   \frac{\lambda}{2} \int_{-\infty}^{X_s}  (\sqrt{ \cosh^2 x-a}- \cosh x)dx\\
&\qquad\qquad\qquad+\frac{ \lambda}{4} \int_0^s  \frac{a \tanh}{\sqrt{\cosh^2 X-a}} \cdot \frac{\sqrt{\cosh^2 X-a}-\cosh X}{\sqrt{\cosh^2 X-a}+\cosh X}ds.
\end{align*}
Note, that 
\begin{align*}
\frac12 \int_\rr ( \sqrt{ \cosh^2 x-a}- \cosh x) dx &= - \int_0^{\pi/2} \frac{a}{1+ \sqrt{1-a \sin^2 y}}dy\\
 &= (1-a)K(a)-E(a)=\II(a),
\end{align*}
where this last equality can be seen by differentiating both sides with respect to $a$ and checking equality at $a=0$.  
It is not hard to check that 
\begin{align}\notag
\left|\tfrac{a \tanh}{\sqrt{\cosh^2 X-a}} \tfrac{\sqrt{\cosh^2 X-a}-\cosh X}{\sqrt{\cosh^2 X-a}+\cosh X}\right|\leq \left|\tfrac{a \tanh x}{\sqrt{\cosh^2 x-a}}\right|\le |a|\wedge  \sqrt{|a|}, \qquad \textup{for} \quad a<1
\end{align}
uniformly in $x$. The upper bound $|a|$ follows from $\sqrt{\cosh^2 x-a}\ge |\sinh x|$, while the bound $\sqrt{|a|}$ requires the optimization of the function $\frac{|a| \sqrt{y-1}}{\sqrt{y}\sqrt{y-a}}$ for $y\ge 1$. This gives the bound
\begin{align}\label{Gbnd}
\left| G_{\tau_\lambda}(X)-\frac{\lambda^2 a \tau}{8}-\lambda\II(a)\right|\le \frac{\lambda \tau(|a|\wedge  \sqrt{|a|})}{4}.
\end{align}
To get the exponential moment bound (\ref{expmombnd}) we use $1=E e^{G_{ \tau \wedge s/\lambda}(X)}$ from (\ref{expid}).
We let $s\to \infty$, use  Fatou's lemma  and (\ref{Gbnd}) to get
\begin{align}
1\ge E e^{G_\tau(X)}\ge E e^{ \frac{\lambda^2 a}{8} \tau+\lambda  \II(a)-\frac{\lambda(|a|\wedge \sqrt{|a|} )\tau}{4 }}.
\end{align}
Rearranging the terms we get (\ref{expmombnd}).

To prove the lower bound (\ref{taulower}) we write
\begin{align}\nonumber
&P(\lambda \tau_\lambda\in (t_a-\eps, t_a+\eps))=P(\lambda \tau_{Y,\lambda}>t_a-\eps)- P(\lambda \tau_{Y,\lambda}>t_a+\eps)\\\nonumber 
&\hskip50pt=E\left[\ind(\lambda \tau_{Y,\lambda}>t_a-\eps) e^{-G_{\tau \wedge (t_a-\eps)/\lambda}(Y)}\right]-E\left[\ind(\lambda \tau_{Y,\lambda}>t_a+\eps) e^{-G_{\tau \wedge (t_a+\eps)/\lambda}(Y)}\right]\\\nonumber
&\hskip50pt=E\left[\ind(\lambda \tau_{Y,\lambda}\in (t_a-\eps,t_a+\eps)) e^{-G_{\tau \wedge (t_a+\eps)/\lambda}(Y)}\right]\\
&\hskip50pt=E\left[\ind(\lambda \tau_{Y,\lambda}\in (t_a-\eps,t_a+\eps)) e^{-G_{\tau}(Y)}\right],
\end{align}
where we used the fact that $e^{-G_{\tau \wedge t }(Y)}$ is martingale in the third line. Because of (\ref{Gbnd}) we have 
\begin{align}
G_\tau(Y)\le   \frac{\lambda^2 a \tau}{8} + \lambda \II(a)+\frac{\lambda |a| \tau }{4 },
\end{align}
and we can bound the last expectation as 
\begin{align*}
E\left[\ind(\lambda \tau_{Y,\lambda}\in (t_a-\eps,t_a+\eps)) e^{-G_{\tau}(Y)}\right]&\ge E\left[\ind(\lambda \tau_{Y,\lambda}\in (t_a-\eps,t_a+\eps)) e^{- \frac{\lambda^2 a \tau}{8} - \lambda \II(a)-\frac{\lambda 
{|a|} \tau }{4 }}\right]
\\
&\ge P(\lambda \tau_{Y,\lambda}\in (t_a-\eps,t_a+\eps))  e^{ -\frac{\lambda a (t_a\pm \eps)}{8} - \lambda \II(a)-\frac{\lambda{|a|} (t_a+\eps) }{4}},
\end{align*}
where we choose the sign of $\eps $ in $t_a\pm \eps$ the same way as the sign of $a$. 

If we can show that  $\lim\limits_{\lambda\to \infty}P(\lambda \tau_{Y,\lambda}\in (t_a-\eps,t_a+\eps))   =1$ for fixed $a$ and  $\eps$ then this will complete the proof of (\ref{taulower}).
Note, that $\wt Y(t):=Y_{\lambda, a}(t/\lambda)$ satisfies the SDE
\begin{align*}
d\wt Y=\frac12 \sqrt{\cosh^2 Y-a} dt+\frac{1}{2\lambda}\tanh \wt Y dt+\frac{1}{\sqrt{\lambda}} dB_t, \qquad \wt Y(0)=-\infty.
\end{align*}
As $\lambda\to\infty$, the strong solution of this SDE converges a.s.~to the solution of the ODE 
\[y'=\frac{1}{2}\sqrt{\cosh^2 y- a}, \ \ \ y(0)=-\infty.\]
This ODE is can be  solved and the solutions  satisfies $\int_{-\infty}^{y(t)} \frac{2}{\sqrt{\cosh^2 x-a}} dx=t$. This shows that $y$ explodes exactly at 
\[
\int_{-\infty}^{\infty} \frac{2}{\sqrt{\cosh^2 x-a}} dx=4K(a)=t_a.
\]
This shows that  $\lim\limits_{\lambda\to \infty}P(\lambda \tau_{Y,\lambda}\in (t_a-\eps,t_a+\eps))  = 1$ for fixed $a$ and  $\eps$ and this completes the proof of the proposition.
\end{proof}
We can use the tail estimates of $\tau_\lambda$ to estimate the tail probabilities of $\wt \alpha_\lambda(t)$  for a fixed $t$. Recall the definition of $\ldp(\cdot)$ from (\ref{ldpdef}). 
\begin{lemma}\label{lem:tailbnd}
There exist a constant $c$ so that for $\lambda>2$ we have 
\begin{align}\label{tailbnd}
e^{-\lambda^2 t \ldp(q)+\lambda c(t+1)( \ldp(q)+1)}\ge \begin{cases}
  \, P(\lceil \wt \alpha_\lambda(t)\rceil_{2\pi} \ge q t \lambda) &\qquad\textup{if $q>1$,}\\[4pt]
  \, P(\lfloor \wt \alpha_\lambda(t)\rfloor_{2\pi} \le q t \lambda)& \qquad \textup{if $0<q<1$.}
  \end{cases} 
\end{align}
%\begin{align}
%P(\wt \alpha_\lambda(t)\ge C t \lambda)&\le  \exp \left[- \lambda^2 t \ldp(C)+\frac{|a|\lambda t}{2}+\lambda \left|\II(-a)\right|\right]
%\qquad \textup{if $C>1$},\\
%P(\wt \alpha_\lambda(t)\le C t \lambda)&\le  \exp \left[-\lambda^2 t \ldp(C)+\frac{\lambda^2 a}{8 }t+\frac{|a|\lambda t}{2}+\lambda \left|\II(-a)\right|\right]
%\qquad \textup{if $C<1$}
%\end{align}
%Here $a$ is given in terms of $C$ as $4 K(-a)=\frac{2\pi}{C}$. 
Moreover, there are absolute constants $c_0,c_1$ so that if $qt\lambda, q$ and $\lambda q \log q$ are all bigger than $c_0$ then
\begin{align}\label{uptail}
 P(\lceil \wt \alpha_\lambda(t)\rceil_{2\pi} \ge q t \lambda)\le e^{-c_1 \lambda^2 t \,q^2 \log q}.
\end{align}
\end{lemma}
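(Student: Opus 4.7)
The strategy is to rewrite both tail events as partial-sum events for i.i.d.\ copies of the hitting time $\tau_\lambda$, and then Chernoff-bound them using the exponential moment estimate \eqref{expmombnd}. By Proposition \ref{prop:renewal}, if $\tau_1,\tau_2,\dots$ are i.i.d.\ copies of $\tau_\lambda$, the event $\{\lfloor\wt\alpha_\lambda(t)\rfloor_{2\pi}\ge 2\pi k\}$ is the same as $\{\tau_1+\cdots+\tau_k\le t\}$. Setting $n=\lceil qt\lambda/(2\pi)\rceil$ and $m=\lfloor qt\lambda/(2\pi)\rfloor$, unpacking the ceiling/floor definitions gives $\{\lceil\wt\alpha_\lambda(t)\rceil_{2\pi}\ge qt\lambda\}=\{\tau_1+\cdots+\tau_{n-1}\le t\}$ and $\{\lfloor\wt\alpha_\lambda(t)\rfloor_{2\pi}\le qt\lambda\}=\{\tau_1+\cdots+\tau_{m+1}>t\}$.

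For the Chernoff parameter I take precisely the exponent appearing in \eqref{expmombnd}: for $a<1$ set $s(a)=\lambda^2 a/8-\lambda(|a|\wedge\sqrt{|a|})/4$, so that $Ee^{s(a)\tau_\lambda}\le e^{-\lambda\II(a)}$. Note that $s(a)>0$ for $0<a<1$ (and $\lambda>2$), while $s(a)<0$ for $a<0$. For the lower tail ($q<1$) apply Markov with weight $e^{s(a)\sum\tau_i}$ and a positive $a$; for the upper tail ($q>1$) apply it with weight $e^{-s(a)\sum\tau_i}$ and a negative $a$. In both cases, after substituting $n,\,m+1$ by $qt\lambda/(2\pi)$, the resulting exponent expands as
\begin{equation*}
-\lambda^2 t\Bigl[\tfrac{a}{8}+\tfrac{q\II(a)}{2\pi}\Bigr]+\lambda t\cdot\tfrac{|a|\wedge\sqrt{|a|}}{4}+O(\lambda)(1+|\II(a)|),
\end{equation*}
where the $O(\lambda)$ absorbs the integer-rounding slack in $n,m$.

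A short calculation with the classical formulas $K'(a)=(E(a)-(1-a)K(a))/(2a(1-a))$ and $E'(a)=(E(a)-K(a))/(2a)$ yields the identity $\II'(a)=-K(a)/2$, so the optimum of the bracket $a/8+q\II(a)/(2\pi)$ sits at $K(a)=\pi/(2q)$, i.e.\ at $a=K^{-1}(\pi/(2q))$; this lies in $(0,1)$ for $q<1$ and is negative for $q>1$. Substituting $q=\pi/(2K(a))$ into the bracket collapses it to $(2-a)/8-E(a)/(4K(a))=\ldp(q)$, so the main term becomes $-\lambda^2 t\ldp(q)$. The residual $\lambda t(|a|\wedge\sqrt{|a|})/4+O(\lambda)(1+|\II(a)|)$ is absorbed into $\lambda c(t+1)(\ldp(q)+1)$ case by case: on $0<a<1$ both $|a|\wedge\sqrt{|a|}=a$ and $|\II(a)|$ are bounded by absolute constants; on $a<0$ with $|a|$ moderate, the $+1$ in $\ldp(q)+1$ absorbs the bounded $\sqrt{|a|}$ and $|\II(a)|$; and on $a\to -\infty$, $\ldp(q)\sim -a/8$ dominates both $\sqrt{|a|}$ and $|\II(a)|$. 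This gives \eqref{tailbnd}.

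Finally, \eqref{uptail} follows from \eqref{tailbnd} together with the asymptotics of $K$ at $-\infty$. A standard expansion gives $K(a)\sim\log(-a)/(2\sqrt{-a})$ as $a\to-\infty$, so solving $K(a)=\pi/(2q)$ for large $q$ yields $\sqrt{-a}\sim 2q\log q/\pi$, and hence $\ldp(q)\sim -a/8\sim q^2\log^2 q/(2\pi^2)$, which is of strictly larger order than $q^2\log q$. The correction in \eqref{tailbnd} is of order $\lambda t q\log q$, a fraction $O(1/(\lambda q\log q))$ of the main term; under the hypotheses $q,\,\lambda q\log q,\,qt\lambda>c_0$ with $c_0$ large enough, the exponent retains a positive constant fraction of $\lambda^2 t q^2\log^2 q$, which absorbs any $c_1\lambda^2 tq^2\log q$ with $c_1$ sufficiently small.

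The main obstacle I anticipate is the uniform error-term bookkeeping: verifying that the shift $\lambda(|a|\wedge\sqrt{|a|})/4$ from \eqref{expmombnd} and the integer-rounding residual really do fit, uniformly in the full range of $q$ and $\lambda$, inside a single clean multiple of $\lambda(t+1)(\ldp(q)+1)$. The derivative identity $\II'(a)=-K(a)/2$ is the key algebraic fact that makes the Chernoff optimum coincide with the closed-form rate $\ldp(q)$.
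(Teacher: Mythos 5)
Your proposal is correct and follows essentially the same route as the paper: the renewal decomposition from Proposition \ref{prop:renewal}, an exponential Markov (Chernoff) bound using the moment estimate (\ref{expmombnd}) evaluated at $a=K^{-1}(\pi/(2q))$, the algebraic collapse of $\tfrac{a}{8}+\tfrac{q\II(a)}{2\pi}$ to $\ldp(q)$, and consolidation of the error terms (which the paper packages as Lemma \ref{Hbound}). One small caveat on phrasing: (\ref{uptail}) does not follow from the packaged bound (\ref{tailbnd}) itself, whose correction term is of order $\lambda t\,q^2\log^2 q$ rather than $\lambda t\,q\log q$, but it does follow from your intermediate exponent with the $\sqrt{|a|}$ shift in $s(a)$ -- which is exactly how the paper argues, by rerunning the Chernoff step with $A=\tfrac{\lambda^2a}{8}-\tfrac{\lambda\sqrt{|a|}}{4}$.
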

\begin{proof}
Introduce the hitting times
\begin{align}\label{def:hit}
\tau_\lambda^{(n)}=\inf\{t>0: \wt \alpha_\lambda(t)>2n\pi\}.
\end{align}
Then by Proposition \ref{prop:renewal} the random variables $\tilde \tau^{(n)}=\tau^{(n)}-\tau^{(n-1)}$ are i.i.d.~with the same distribution as $\tau_\lambda$. Applying the exponential Markov inequality we get
\begin{align}
P(\lfloor \wt \alpha_\lambda(t)\rfloor_{2\pi} \le q t \lambda)&=P\bigg(\sum_{i=1}^{\lceil qt \lambda/(2\pi)\rceil} \tilde \tau^{(i)}\ge  t\bigg)\le \left(E e^{A \tau_\lambda}\right)^{\lceil qt \lambda/(2\pi)\rceil} e^{-At}\label{tail1}
\end{align}
with any $A>0$. Suppose first that $q<1$ which also implies $a=a(q)=K^{-1}(\pi/(2q))\in(0,1)$.   By choosing 
\begin{align}
A=\frac{\lambda^2 a}{8}-\frac{\lambda {|a|}}{4}\label{defA}
\end{align}
we have $A>0$ if $\lambda>2$ and from (\ref{expmombnd}) we have $E e^{A\tau_\lambda}\le e^{-\lambda \II(a)}$.
Together with (\ref{tail1}) this gives
\begin{align}\notag
P(\wt \alpha_\lambda(t)\le q t \lambda)&\le e^{-\lambda \II(a) \lceil qt \lambda/(2\pi)\rceil-(\frac{\lambda^2 a}{8}+\frac{\lambda |a|}{4 })t}\\
&\le e^{-\frac{q t \lambda^2}{2\pi} \II(a) -(\frac{\lambda^2 a}{8}+\frac{\lambda |a|}{4})t+\lambda \left| \II(a)\right|}=e^{-\lambda^2 t \ldp(q)+\lambda\left(\tfrac{|a| t}{4}+|\II(a)|\right)}
\label{xxx1}
\end{align} 
where we used the definitions (\ref{ldpdef}) and (\ref{defH}).

 For the $q>1$ case we use the same steps. Here  $a=K^{-1}(\pi/(2q))<0$ and $A$ defined in (\ref{defA}) is negative which is exactly what we need for the exponential Markov inequality. Eventually we get
\begin{align}\notag
P(\lceil \wt \alpha_\lambda(t)\rceil_{2\pi}\ge q\lambda t) &\le e^{-\lambda \II(a) \lfloor qt \lambda/(2\pi)\rfloor-(\frac{\lambda^2 a}{8}-\frac{\lambda |a|}{4 })t}\\
&\le e^{-\frac{q t \lambda^2}{2\pi} \II(a) -(\frac{\lambda^2 a}{8}-\frac{\lambda |a|}{4 })t+\lambda |\II(a)|}=e^{-\lambda^2 t \ldp(q)+\lambda\left(\tfrac{|a| t}{4}+|\II(a)|\right)}.\label{xxx2}
\end{align}
By Lemma \ref{Hbound} in the Appendix there is a constant $c$ so that 
\begin{align}
\II(a(q))+\tfrac14 |a(q)| t\le c(t+1)( \, \ldp(q)+1),
\end{align}
for all $t, q>0$ which means that we can replace the upper  bounds in (\ref{xxx1}) and (\ref{xxx2}) with $e^{-\lambda^2 t \ldp(q)+\lambda c(t+1)( \ldp(q)+1)}$. This proves the first part of Lemma \ref{lem:tailbnd}. 

For the second part we repeat the same steps as in the $q>1$ case, but now use 
\[
A=\frac{\lambda^2 a}{8}-\frac{\lambda \sqrt{|a|}}{4}.
\]
This gives
\begin{align*}
P(\lceil \wt \alpha_\lambda(t)\rceil_{2\pi}\ge q\lambda t) &\le e^{-\lambda \II(a) \lfloor qt \lambda/(2\pi)\rfloor-\frac{\lambda^2 a}{8}t+\frac{\lambda \sqrt{|a|}}{4 }t}.
\end{align*}
By Proposition \ref{prop:ldpasympt} of the Appendix if $q$ is large enough then $a=K^{-1}(\pi/(2q))>c q^2 \log^2 q$ with some positive constant $c$. If $-a \lambda$ and $q t \lambda$ are big enough (which can be achieved by choosing  $c_0$ big enough),  we will have 
\[
\lfloor qt \lambda/(2\pi)\rfloor>\frac{9}{10}  qt \lambda/(2\pi), \qquad -\frac{\lambda^2 a}{8}+\frac{\lambda \sqrt{|a|}}{4 }<-\frac{11}{10} \cdot \frac{\lambda^2 a}{8}.
\]
Then
\begin{align*}
-\lambda \II(a) \lfloor qt \lambda/(2\pi)\rfloor-\tfrac18 {\lambda^2 a}t+\tfrac14 \lambda \sqrt{|a|} t&< -\lambda^2 t\left(
\tfrac{9}{10}\II(a)\tfrac{q}{2\pi}+\tfrac{11}{10}\tfrac{a}{8}\right)\\
 &=-\lambda^2 t \left(-\frac{7 a}{80}-\frac{9 E(a)}{40 K(a)}+\frac{9}{40}   \right)\\
 &<-c_2 \lambda^2 t q^2 \log^2 q. 
\end{align*}
with a positive constant $c_2$, where in the last step we again used the asymptotics given in  Proposition \ref{prop:ldpasympt} together with (\ref{elliptic1}). This completes the proof of (\ref{uptail}). 
\end{proof}

\section{The path deviation for the $\wt \alpha_\lambda$ process}\label{s:pathschr}

In this section we will prove Theorem \ref{thm:pathschr}. In order to show the large deviation principle we need that
\begin{align*}
\liminf_{\lambda\to \infty} \frac{1}{\lambda^2}P\left(\frac{\wt \alpha_\lambda(\cdot)}{\lambda}\in G\right)&\ge \inf_{g\in G} \Pathschr(g), \qquad \textup{for any open set $G\subset C[0,T]$},\\
\limsup_{\lambda\to \infty} \frac{1}{\lambda^2}P\left(\frac{\wt \alpha_\lambda(\cdot)}{\lambda}\in K\right)&\le \inf_{g\in K} \Pathschr(g), \qquad \textup{for any closed set $K\subset C[0,T]$}.
\end{align*}
The fact that $ \Pathschr(g)$ is a good rate function will be  proved in Proposition \ref{prop:goodrate} of Section \ref{s:goodrate}. 

We will use the fact that $\ldp(x)$ is strictly convex on $(0,\infty)$ with a global minimum at $\ldp(1)=0$, and also that there is a constant $c>0$ so that 
\begin{align}\label{ldptail1}
c^{-1} \le \frac{\ldp(x)}{x^2 \log^2 x}\le c, \qquad \textup{for all $x>2$}.  
\end{align}
These statements will be proved in Propositions \ref{prop:ldpconvex} and \ref{prop:ldpasympt} of the Appendix.

\begin{proof}[Proof of the large deviations upper bound in Theorem \ref{thm:pathschr}]
We will follow the standard strategy for proving path level large deviations. Consider a closed subset $K$ of $C[0,T]$.  We need to bound $P(\tfrac{1}{\lambda}\wt \alpha_\lambda(\cdot)\in K)$.  Define the  $\delta$-`fattening' of $K$ as
\begin{align}\label{Kfat}
K^\delta:=\{f\in C[0,T]: \|f-g\|\le \delta \textup{ for some }g\in K\}. 
\end{align}
From now on $\| \cdot \|$ denotes the sup-norm on the appropriate interval. 

Let $\pi_N$ be the following projection of $C[0,T]$ to piecewise linear paths: 
\begin{align}\label{pi}
(\pi_N f)(i T/N)=\lfloor f(iT /N)\rfloor_{2\pi}, \qquad 0\le i\le N
\end{align}
and $\pi_N f$ is defined  linearly between these points. Then
\begin{align}\label{pathbnd1}
P( \tilde \alpha_\lambda/ \lambda \in K) & \leq P( \|\tilde \alpha_\lambda- \pi_N \tilde \alpha_\lambda \| \geq \delta \lambda)+ P\big( \pi_N ( \tfrac1{\lambda}\tilde \alpha_\lambda) \in K^\delta \big).
\end{align}
We will bound the two probabilities in (\ref{pathbnd1}) separately. 

The first term can be rewritten as
\begin{align}\label{12345}
P\left[ \left\|{\tilde \alpha_\lambda}-{ \pi_N \tilde \alpha_\lambda}\right \| \geq \delta\lambda \right]&= P\bigg( \max_k  \sup\limits_{t\in [\frac{(k-1)T}{N},\frac{kT}{N}]}\left|{\pi_N \tilde \alpha_\lambda(t)}-{ \tilde \alpha_\lambda(t)}\right|\geq \delta\lambda \bigg).
\end{align}
By Proposition \ref{prop:renewal} the process $\lfloor \wt \alpha_\lambda(t)\rfloor_{2\pi}$ is non-decreasing.Thus  for any fixed $k$ we have 
\begin{align*}
\sup\limits_{t\in [\frac{(k-1)T}{N},\frac{kT}{N}]}\left|{\pi_N \tilde \alpha_\lambda(t)}-{ \tilde \alpha_\lambda(t)}\right|  \le {\lceil \wt \alpha_\lambda ({(k+1)T}/{N})\rceil_{2\pi}}- {\lfloor \wt \alpha_\lambda({kT}/{N})\rfloor_{2\pi}}.
\end{align*}
By Proposition \ref{prop:stochdom} the term on the right is stochastically dominated by
$
\wt\alpha_\lambda(T/N)+4\pi
$
therefore
\begin{align}
 P( \|\tilde \alpha_\lambda- \pi_N \tilde \alpha_\lambda \| \geq \delta\lambda )&\le N P(\wt \alpha_\lambda(T/N)+4\pi\ge \delta \lambda)\le N P\left(\frac{1}{\lambda}\wt \alpha_\lambda(T/N)\ge \frac{\delta }{2} \right)\label{23456}
\end{align}
where the last bound holds if $\lambda>8\pi/\delta$. Using Lemma \ref{lem:tailbnd} we get
\begin{align}\notag
N P\left(\frac{1}{\lambda}\wt \alpha_\lambda(T/N)\ge \frac{\delta }{2} \right)&\le N e^{-(\lambda^2 \frac{T}{N}+ \lambda c_1(T/N+1)) \ldp\left(\frac{\delta N}{2T}\right)+\lambda c_1(T/N-1) }
\end{align}
and this leads to
\begin{align}
\limsup_{\lambda\to \infty} \frac{1}{\lambda^2} \log  P( \|\tilde \alpha_\lambda- \pi_N \tilde \alpha_\lambda \| \geq \delta \lambda)\le -\frac{T}{N}\ldp\left(\frac{\delta N}{2T}\right)\label{1st_term}.
\end{align}
Note, that for fixed $\delta$ and $T$ as $N\to \infty$ the right hand side converges to $-\infty$ by (\ref{ldptail1}). 

The second term on the right side of (\ref{pathbnd1}) can be bounded as 
\begin{align}\notag
P\left(\pi_N (\wt \alpha_\lambda/\lambda)\in K^\delta\right)\le P\left(\Pathschr(\pi_N(\wt \alpha_\lambda/\lambda ))\ge \inf_{g\in K^\delta} \Pathschr(g)   \right).
\end{align}
We introduce 
\begin{align}\notag
\Delta \wt \alpha_i=\frac{N}{\lambda T}\left(\lfloor \wt \alpha(i T/N)\rfloor_{2\pi}-\lfloor \wt \alpha((i-1) T/N)\rfloor_{2\pi}\right), \qquad  \textup{for $1\le i \le N$} 
\end{align}
and $C_\delta=\inf_{g\in K^\delta} \Pathschr(g)  $. Then we have to bound
\begin{align}
P(\Pathschr(\pi_N(\wt \alpha_\lambda/\lambda ))\ge C_\delta)=P\left(\sum_{i=1}^N \frac{T}{N}\ldp\left({\Delta \wt \alpha_i}\right)\ge C_\delta\right).
\end{align}
We can apply Proposition \ref{prop:stochdom} with $t_i=\frac{iT}{N}, 1\le i\le N$ to get independent random variables $\xi_i$ with $\xi_i\eqd \wt \alpha_\lambda(T/N)$ and
\begin{align}\notag
\frac{N}{\lambda T}\lfloor \xi_i \rfloor_{2\pi} \le \Delta \wt \alpha_i\, \le \,\frac{N}{\lambda T}\left(\lfloor \xi_i \rfloor_{2\pi}+2\pi\right).
\end{align}
%
%
%
%
%Our first step is to note that the random variables $\Delta \wt \alpha_i$ can be stochastically dominated by i.i.d.~random variables. Indeed, by  Proposition \ref{prop:SDEprop} we have the bounds 
%\begin{align}
%\frac{N}{\lambda T}\lfloor \widehat \alpha_i(T/N)\rfloor_{2\pi} \le \Delta \wt \alpha_i\, \le \,\frac{N}{ \lambda T}\lceil \widehat \alpha_i(T/N)\rceil_{2\pi}
%\end{align}
%where $\widehat \alpha_i$ solves (\ref{schrSDE0}) but with the Brownian motion $B(t+T i/N)-B(T i/N)$. 
Because of the convexity of $\ldp(\cdot)$ we then  have
\begin{align*}
\ldp\left({\Delta \wt \alpha_i}\right)&\le \max\left( \ldp\left(\frac{N\lfloor\xi_i\rfloor_{2\pi}}{ \lambda T}\right), \ldp\left(\frac{N  \lfloor \xi_i \rfloor_{2\pi}}{\lambda T}+\frac{2\pi N}{\lambda T}\right) \right)\\
&\le (1+\frac{2\pi N}{\lambda T})\ldp\left(\frac{N \lfloor \xi_i \rfloor_{2\pi}}{ \lambda T}\right)+c \frac{2\pi N}{\lambda T}
\end{align*}
where we used  Lemma \ref{Iexpansion} of the Appendix for the last bound. Fix $1/2>\eps>0$. Using the exponential Markov inequality, the independence of $\xi_i$ and $\xi_i\eqd \wt \alpha_\lambda(T/N)$ we get the bound
\begin{align}
P\left(\sum_{i=1}^N \frac{T}{N}\ldp\left({\Delta \wt \alpha_i}\right)\ge C_\delta\right)&\le \left(E e^{(1-2\eps) \lambda^2 \frac{T}{N}\left((1+ \frac{2\pi N}{\lambda T})\ldp\left(\frac{N \lfloor\wt \alpha_\lambda(T/N)\rfloor_{2\pi}}{ \lambda T}\right)+c \frac{2\pi N}{\lambda T}\right)}\right)^N e^{-(1-2\eps) \lambda^2 C_\delta}\notag\\
&\le \left(E e^{(1-\eps) \lambda^2 \frac{T}{N} \ldp\left(\frac{N \lfloor\wt \alpha_\lambda(T/N)\rfloor_{2\pi}}{ \lambda T}\right)}\right)^N e^{(1-2\eps)c 2\pi\lambda  N-(1-2\eps)\lambda^2C_\delta},\label{upper1}
\end{align}
where the second inequality holds for fixed $\eps, N, T$ if $\lambda$ is big enough. 
Our next step is to estimate the exponential moment $E e^{(1-\eps) \lambda^2  \frac{T}{N}\ldp\left(\frac{N\lfloor\wt \alpha_\lambda(T/N)\rfloor_{2\pi}}{ \lambda T}\right)}$ for a fixed $\eps>0$. By Lemma \ref{lem:expmom} below  if $N,T, \eps$ are fixed then
\[
\limsup_{\lambda\to \infty} \frac1{\lambda^2} \log E e^{(1-\eps) \lambda^2  \frac{T}{N}\ldp\left(\frac{N\lfloor\wt \alpha_\lambda(T/N)\rfloor_{2\pi}}{ \lambda T}\right)}\le  0. %4\eps^{-1} e^{\lambda\,  c_1(T/N+1)}.
\]
Using this with (\ref{upper1}) we get  
%the bound
%\begin{align*}
%P\left(\sum_{i=1}^N \frac{T}{N}\ldp\left({\Delta \wt \alpha_i}\right)\ge C_\delta\right)\le \left( \frac{4}{\varepsilon}\right)^{N}e^{\lambda c_1(T+N)+ 2\pi c_2 \lambda N(1-2\eps)-(1-2\eps)\lambda^2 C_\delta}.
%\end{align*}
%and
\begin{align}\label{2nd}
\limsup_{\lambda\to \infty} \frac{1}{\lambda^2} \log P\left(\sum_{i=1}^N \frac{T}{N}\ldp\left({\Delta \wt \alpha_i}\right)\ge C_\delta\right)\le -(1-2\eps)C_\delta.
\end{align}
Now we let  $\varepsilon\to 0$ and then $N\to \infty$.
The bounds  (\ref{1st_term}), (\ref{2nd}) with (\ref{pathbnd1}) give
\begin{align}
\limsup_{\lambda\to \infty} \frac1{\lambda^2} \log P(\lambda^{-1}\wt\alpha_\lambda(\cdot) \in K)\le -\inf_{g\in K^\delta} \Pathschr(g). 
\end{align}
Using the fact that $\Pathschr$ is a good rate function (which is proved in Proposition \ref{prop:goodrate} of Section \ref{s:goodrate}) we get that the right hand side converges to $-\inf_{g\in K} \Pathschr(g)$ as $\delta\to 0$. (See e.g.~Lemma 4.1.6 from \cite{DemboZeitouni}.) This  finishes the proof of the lower bound.
\end{proof}
Now we will prove the missing estimate for the lower bound. 
\begin{lemma}\label{lem:expmom}
Fix  $t>0$ and $1>\eps>0$.  Then
\[
\limsup_{\lambda\to \infty} \frac{1}{\lambda^2} \log E e^{(1-\eps) \lambda^2  t \ldp\left(\frac{\lfloor\wt \alpha_\lambda(t)\rfloor_{2\pi}}{ \lambda t}\right)}\le 0.
\]
\end{lemma}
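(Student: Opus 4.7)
My plan is to bound the exponential moment using a layer-cake representation combined with the tail bounds from Lemma~\ref{lem:tailbnd}. The key observation is that the factor $(1-\eps)<1$ in the exponent provides just enough slack to beat the $\lambda^2 t\,\ldp(q)$ coming from the tail bound, leaving a $\Theta(\lambda^2)$ surplus that absorbs all subexponential corrections.

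Writing $Q_\lambda = \lfloor\wt\alpha_\lambda(t)\rfloor_{2\pi}/(\lambda t)$ and using the layer-cake formula with the change of variable $u=e^{(1-\eps)\lambda^2 t r}$, I would start from
\[
E e^{(1-\eps)\lambda^2 t\,\ldp(Q_\lambda)} \;=\; 1 + (1-\eps)\lambda^2 t \int_0^\infty P(\ldp(Q_\lambda)>r)\,e^{(1-\eps)\lambda^2 t r}\,dr.
\]
Strict convexity of $\ldp$ on $(0,\infty)$ with unique minimum $\ldp(1)=0$ (Proposition~\ref{prop:ldpconvex}) then provides two monotone inverse branches $q_-(r)\le 1\le q_+(r)$, and the sublevel decomposition
\[
\{\ldp(Q_\lambda)>r\} \;\subseteq\; \{Q_\lambda>q_+(r)\}\cup\{Q_\lambda<q_-(r)\}
\]
(with the second set empty for $r>\ldp(0)$) lets me apply the two halves of (\ref{tailbnd}) to each piece, giving for $\lambda>2$
\[
P(\ldp(Q_\lambda)>r) \;\le\; 2\,e^{-\lambda^2 t r+\lambda c(t+1)(r+1)}.
\]

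Substituting this bound, the $r$-integrand becomes $2e^{\lambda c(t+1)}\exp\!\bigl(-r(\eps\lambda^2 t-\lambda c(t+1))\bigr)$. For $\lambda$ large enough relative to $\eps$ and $t$, the coefficient of $r$ is at least $\tfrac12\eps\lambda^2 t$, so the $r$-integral is $O((\eps\lambda^2 t)^{-1})$. Multiplying by the prefactor $(1-\eps)\lambda^2 t$ and the $r$-free factor $e^{\lambda c(t+1)}$ yields
\[
E e^{(1-\eps)\lambda^2 t\,\ldp(Q_\lambda)} \;\le\; 1+C_{\eps,t}\,e^{\lambda c(t+1)},
\]
and taking $\tfrac{1}{\lambda^2}\log$ of both sides produces a bound of order $\lambda^{-1}$, which vanishes as $\lambda\to\infty$.

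The only real obstacle is uniform convergence of the $r$-integral at infinity, and this is precisely what the $(1-\eps)$ factor is engineered to deliver: after cancellation with the $-\lambda^2 tr$ from the tail estimate, the exponent has a strictly negative $\Theta(\lambda^2)$ coefficient in front of $r$, which overwhelms the $O(\lambda)(r+1)$ sub-exponential correction uniformly in $r\in[0,\infty)$. Notably, no fine large-$q$ asymptotics of $\ldp$ (such as those in Proposition~\ref{prop:ldpasympt}) are needed here — the tail bound (\ref{tailbnd}) already carries the matching rate $-\lambda^2 t\ldp(q)$, and the whole argument proceeds by a single scalar cancellation.
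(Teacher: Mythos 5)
Your argument is correct and is essentially the paper's own proof in different coordinates: the paper performs the same layer-cake/integration-by-parts decomposition over the value $x$ of $\lfloor\wt\alpha_\lambda(t)\rfloor_{2\pi}/(\lambda t)$ (splitting at $x=1$ and using $G'(x)e^{(1-\eps)G(x)}$ with $G(x)=\lambda^2 t\,\ldp(x)$), applies the two halves of (\ref{tailbnd}) exactly as you do, and arrives at the same bound $2+4\eps^{-1}e^{\lambda c_1(t+1)}$ via the same $\eps$-slack cancellation. Your parametrization by the level $r=\ldp(Q_\lambda)$ is just the change of variables $r=\ldp(x)$ applied to the paper's integrals, so there is nothing further to compare.
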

\begin{proof}
 Introduce the temporary notation $G(x)= \lambda^2  t \ldp\left(x\right)$. This is a convex function with $G(1)=0$ as its minimum. 
Then we have 
\begin{align*}
E e^{(1-\eps) \lambda^2 t  \ldp\left(\frac{ \lfloor\wt \alpha_\lambda(t)}{ \lambda t}\right)\rfloor_{2\pi}}\le&
\ 2-\int_0^{1} (1-\eps)G'(x) e^{(1-\eps)G(x)} P({ \lfloor\wt \alpha_\lambda(t)\rfloor_{2\pi}}<\lambda t x)dx\\
&\hspace{1.5cm} +\int_{1}^\infty (1-\eps)G'(x) e^{(1-\eps)G(x)} P({ \lfloor\wt \alpha_\lambda(t)\rfloor_{2\pi}}>{ \lambda t}x)dx.
\end{align*}
Using Lemma \ref{lem:tailbnd}  we get
\[
 P({ \lfloor\wt \alpha_\lambda(t)\rfloor_{2\pi}}<{ \lambda t}x) \leq \exp \big\{ - (1-c_1\lambda^{-1}(1+ t^{-1}))G(x)+ \lambda c_1 (t+1) \big\}
\]
for $x<1$ and a similar bound for  $ P({ \lfloor\wt \alpha_\lambda(t)\rfloor_{2\pi}}>{ \lambda t} x) \le  P({ \lceil\wt \alpha_\lambda(t)\rceil_{2\pi}}>{ \lambda t}x) $ for $x>1$. This gives us
\begin{align*}
E e^{(1-\eps) \lambda^2t \ldp\left(\frac{\wt \alpha_\lambda(t)}{ \lambda t}\right)}\le&
\ 2-\int_0^{1} (1-\eps)G'(x) e^{((1+t^{-1})(c_1/\lambda)-\eps)G(x)+\lambda c_1(t+1)} dx\\
&\hspace{1cm}+\int_{1}^\infty (1-\eps)G'(x) e^{((1+t^{-1})(c_1/\lambda)-\eps)G(x)+\lambda c_1(t+1)} dx\\
\le& 2+ 4\eps^{-1} e^{\lambda\, c_1(t+1)} %\leq 4\eps^{-1} e^{\lambda\,  c_1(t+1)}
\end{align*}
where the last inequality holds if  $(1+t^{-1})c_1/\lambda<\eps/2$, i.e.~for large enough $\lambda$.  From this the lemma  follows. 
\end{proof}

Now we turn to the lower bound proof in the large deviation result of Theorem \ref{thm:pathschr}. As we will see, we will be able to reduce the problem to studying the probability of $\frac1{\lambda} \wt \alpha_\lambda(t)$ being close to a straight line.
\begin{proposition}\label{prop:lowerlin}
Fix $q> 0$ and $T, \eps>0$. Then
\begin{align}
\lim\limits_{\eps\to 0}\liminf_{\lambda\to \infty} \frac{1}{\lambda^2} \log P(\wt \alpha(t) \in [\lambda(q t-\eps), \lambda(q t+\eps)], t\in [0,T])&\ge -T \ldp(q).\notag
\end{align}
\end{proposition}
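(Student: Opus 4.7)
The strategy is to couple the tube event to an i.i.d.~product via the renewal structure of Proposition \ref{prop:renewal} and then apply the lower tail bound (\ref{taulower}) of Proposition \ref{prop:tau}. The correct value to plug in is $a = K^{-1}(\pi/(2q))$, which makes $t_a = 4K(a) = 2\pi/q$ --- exactly the expected inter-hit time if $\wt\alpha_\lambda$ is to grow at rate $q\lambda$. Writing $\tau_\lambda^{(n)}$ for the hitting times from (\ref{def:hit}) and $\tilde\tau^{(i)} = \tau_\lambda^{(i)} - \tau_\lambda^{(i-1)}$ for their i.i.d.~increments, I would fix $\eps > 0$, pick an auxiliary parameter $\eps' = c\eps$ with $c=c(q,T)>0$ small, set $M_\lambda = \lceil T\lambda/(t_a - \eps')\rceil$, and consider
\[
\mathcal{E}_\lambda \;=\; \bigcap_{i=1}^{M_\lambda}\bigl\{\lambda\tilde\tau^{(i)} \in (t_a - \eps', t_a + \eps')\bigr\}.
\]

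The first step is a purely pathwise inclusion $\mathcal{E}_\lambda \subseteq \{|\wt\alpha_\lambda(t)-q\lambda t| \leq \eps\lambda \text{ for all } t \in [0,T]\}$. On $\mathcal{E}_\lambda$ we have $\tau_\lambda^{(M_\lambda)} \geq T$, and for $t \in [\tau_\lambda^{(n)}, \tau_\lambda^{(n+1)})$ with $n \leq M_\lambda$ Proposition \ref{prop:renewal} gives $\wt\alpha_\lambda(t) \in [2n\pi, 2(n+1)\pi)$, while the per-increment bounds combined with $qt_a = 2\pi$ yield $q\lambda t \in [n(2\pi - q\eps'),\,(n+1)(2\pi+q\eps')]$. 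Thus $|\wt\alpha_\lambda(t) - q\lambda t| \leq 2\pi + qM_\lambda\eps'$, which is $\leq \eps\lambda$ for all $\lambda$ large once $c$ is chosen small enough.

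The second step uses independence and (\ref{taulower}) to lower bound $P(\mathcal{E}_\lambda)$. The dominant part of the resulting exponent is $-M_\lambda\lambda(\II(a) + at_a/8)$; the factor $A(\eps',\lambda,a)^{M_\lambda}$ and the subleading contributions in (\ref{taulower}) (traceable to the Girsanov bound (\ref{Gbnd})) together contribute only $o(\lambda^2)$ when $M_\lambda \asymp \lambda$ and $\eps$ is then sent to $0$. Combined with $M_\lambda/\lambda \to T/t_a$ and the identity
\[
\frac{1}{t_a}\bigl(\II(a) + \tfrac{at_a}{8}\bigr) \;=\; \frac{1-a}{4} - \frac{E(a)}{4K(a)} + \frac{a}{8} \;=\; \frac{2-a}{8} - \frac{E(a)}{4K(a)} \;=\; \ldp(q),
\]
(which uses $t_a = 2\pi/q$ and $\II(a) = (1-a)K(a)-E(a)$), this gives $\liminf_\lambda \lambda^{-2}\log P(\mathcal{E}_\lambda) \geq -T\ldp(q) - O(\eps)$. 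Sending $\eps \to 0$ finishes the proof.

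The main delicacy is the joint calibration of $\eps'$ against $\eps$: one needs $\eps'$ linear in $\eps$ (with slope depending on $q$ and $T$) so that the pathwise inclusion accumulates correctly over the $M_\lambda \asymp \lambda$ increments, and simultaneously needs the subleading $|a|$-order terms from (\ref{taulower}) to scale only as $O(\lambda)$ in the total exponent so they vanish on the $\lambda^{-2}$ scale. The choice $t_a = 2\pi/q$ is precisely what makes the leading rate collapse to $T\ldp(q)$.
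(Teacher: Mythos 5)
Your proposal is correct and follows essentially the same route as the paper: the renewal structure of Proposition \ref{prop:renewal} reduces the tube event to an intersection of $\asymp\lambda$ i.i.d.\ events for the waiting times, each handled by the lower bound (\ref{taulower}) with $a=K^{-1}(\pi/(2q))$ so that $t_a=2\pi/q$, and the identity $t_a^{-1}\bigl(\II(a)+\tfrac{a t_a}{8}\bigr)=\ldp(q)$ produces the rate $-T\ldp(q)$. Your calibration $\eps'=c(q,T)\eps$ plays the role of the paper's $\eps_1=\tfrac{\pi\eps}{2q(qT+\eps)}$, and your pathwise inclusion and treatment of the subleading Girsanov error terms match the paper's argument.
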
  
\begin{proof}
%If $C=0$ then 
%\begin{align}
%P(\wt \alpha(t) \in [\lambda(C t-\eps), \lambda(C t+\eps)], t\in [0,T])&=P(\wt \alpha(t)\le \lambda \eps, t\in [0,T])\\
%&\ge P(\wt \alpha(T)\le \lambda \eps-2\pi)
%\end{align}
%This can be bounded by XXX.
%
 As $q>0$ we may assume $\eps \le q T/2$ by choosing $\eps$ small enough. Let $N=\frac{\lceil (qT+\epsilon)\lambda\rceil_{2\pi}}{2\pi}$, and choose $\eps_1= \frac{\pi \eps}{2q(qT+\eps)}$, which satisfies $\eps_1< \frac{\eps \lambda}{2qN}$ for $\lambda>2$.
Recall the definition of $\tau^{(n)}_\lambda$ and $\wt \tau^{(n)}_\lambda$ from (\ref{def:hit}). We will prove that 
\begin{align}\notag
P(\wt \alpha(t) \in [\lambda(q t-\eps), &\lambda(q t+\eps)], t\in [0,T])\\
&\ge P\left( \lambda \wt \tau^{(k)}_\lambda\in ( \tfrac{2\pi}{q}-\eps_1,  \tfrac{2\pi}{q}+\eps_1), 1\le k \le N\right).\notag
\end{align}
Roughly speaking, this will follow from the simple fact that if we are within $\eps/q$ of the line $y=qt$ in the horizontal  direction, then we are within $\eps$ in the vertical direction. 
% If there are at most $N$ jumps then if the time it takes to do a single jump is within $\eps /(2qN)$ of the expected time we can be off by no more then $\eps/(2q)$ at the last jump.
If $ \lambda \wt \tau^{(k)}_\lambda\ge \frac{2\pi}{q}-\eps_1$ for $1\le k \le N$ then $\lambda \tau^{(k)}_\lambda\ge k( \frac{2\pi}{q}-\eps_1)$ and 
\begin{align*}
\wt \alpha_\lambda\left( \tfrac{k}{\lambda}\left( \tfrac{2\pi}{q}-\eps_1\right)\right)\le 2 k \pi=
\lambda \frac{2\pi}{2\pi/q-\eps_1}\cdot 
\frac{k}{\lambda}\left( \tfrac{2\pi}{q}-\eps_1\right) 
\end{align*}
for $1\le k \le N$. Together with the fact that $\lfloor \wt \alpha_\lambda \rfloor_{2\pi}$ is non-decreasing we get that
\begin{align*}
\wt \alpha_\lambda(t)\le \lambda \frac{2\pi}{2\pi/q-\eps_1}\cdot \left(t+\tfrac{1}{\lambda}(2\pi/q-\eps_1)   \right), \qquad \textup{ for } t\le \tfrac{N}{\lambda}\left( \tfrac{2\pi}{q}-\eps_1\right).
\end{align*}
This inequality implies $\wt \alpha_\lambda(t)\le \lambda (qt+\eps)$, for $ t\le T$, $\lambda \eps >4\pi$. 

The other direction is similar, if we have $ \lambda \wt \tau^{(k)}_\lambda\le \frac{2\pi}{q}+\eps_1$ for $1\le k \le N$ then
\begin{align*}
\wt \alpha_\lambda\left( \tfrac{k}{\lambda}\left( \tfrac{2\pi}{q}+\eps_1\right)\right)\ge 2 k \pi=
\lambda \frac{2\pi}{2\pi/q+\eps_1}\cdot 
\frac{k}{\lambda}\left( \tfrac{2\pi}{q}+\eps_1\right) 
\end{align*}
which implies 
\begin{align*}
\wt \alpha_\lambda(t)\ge \lambda \frac{2\pi}{2\pi/q+\eps_1}\cdot \left(t-\tfrac{1}{\lambda}(2\pi/q+\eps_1)   \right), \qquad \textup{ for } t\le \tfrac{N}{\lambda}\left( \tfrac{2\pi}{q}+\eps_1\right).
\end{align*}
and $\wt \alpha_\lambda(t)\ge \lambda (qt-\eps)$ for $t\le T$. Using the independence of $\wt \tau^{(k)}_\lambda$ we get the bound
\begin{align}
P(\wt \alpha(t) \in [\lambda(q t-\eps), \lambda(q t+\eps)], t\in [0,T])\ge P\left( \lambda \wt \tau^{(k)}_\lambda\in ( \tfrac{2\pi}{q}-\eps_1,  \tfrac{2\pi}{q}+\eps_1)\right)^{N}.
\end{align}
By the lower bound (\ref{taulower}) we have
\begin{align*}
\log P(\wt \alpha(t) \in [\lambda(q t-\eps), \lambda(q t+\eps)], t\in [0,T])&\\ 
&\hskip-150pt \ge \frac{\lambda(qT+2\eps)}{2\pi}\left(- \frac{2\pi \lambda}{q} \ldp(q)- \frac{\lambda |a|}{8}(\eps_1+4(2\pi/q+\eps_1))+ \log A(\eps_1, \lambda, a)\right).
\end{align*}
Recalling $\eps_1= \frac{\pi \eps}{2q(qT+\eps)}$ we get
\begin{align*}
\lim\limits_{\eps\to 0}\liminf_{\lambda\to \infty} \frac{1}{\lambda^2} \log P(\wt \alpha(t) \in [\lambda(q t-\eps), \lambda(q t+\eps)], t\in [0,T])&\ge -T \ldp(q).\qedhere
\end{align*}
\end{proof}
%Now we are ready for the proof of the lower bound.

\begin{proof}[Proof of the lower bound in Theorem \ref{thm:pathschr}.]
Let $G$ be an open subset of $C[0,T]$. We would like to show that 
\begin{align}\label{LDPlowerbnd}
\liminf_{\lambda\to \infty} \frac{1}{\lambda^2} \log P(\frac{1}{\lambda} \wt \alpha_\lambda(\cdot) \in G)\ge - \inf_{g\in G} \int_0^T \ldp\left(g'(t)\right) dt.
\end{align}
For this it is enough to prove that for any $g\in G$ with $\int_0^T \ldp\left(g'(t)\right) dt<\infty$ and  $\delta>0$ we have 
\begin{align}
\liminf_{\lambda\to \infty} \frac{1}{\lambda^2} \log P(\frac{1}{\lambda} \wt \alpha_\lambda(\cdot) \in G)\ge -\int_0^T \ldp\left(g'(t)\right) dt-\delta.\label{openlwr}
\end{align}
We can approximate $g$ with a piecewise linear function $\tilde g$ in the sup-norm so that we have $|\int_0^T \ldp\left(g_n'(t)\right) dt- \int_0^T \ldp\left(g'(t)\right) dt|<\delta$. Because of this  we may assume that $g$ is piecewise linear, moreover, we may assume that there are no horizontal segments in $g$. Suppose that $g$ is linear with slope $q_i$ on the interval $[T_i,T_{i+1}]$ with $0\le i \le k-1$ and $0=T_0<T_1<\dots<T_k=T$. We claim that if $\lambda>\lambda_0(\eps, k)$ then
\begin{align}\nonumber
P(\|\frac{1}{\lambda} \wt \alpha_\lambda(\cdot) -g(\cdot)\|\le \eps)&\ge
P\left(|\frac{1}{\lambda}( \wt \alpha_\lambda(t)- \wt \alpha_\lambda(T_i)) -q_i (t-T_i)|\le \eps/k,\textup{ if } t\in[T_i,T_{i+1}]\right)
\\
&\ge\label{second}
 \prod_{i=0}^{k-1} P\left(|\frac{1}{\lambda} \wt \alpha_\lambda(t) -q_i t|\le \eps/(2k),\textup{ for } t\in[0,T_{i+1}-T_i]\right)
\end{align}

The first inequality is straightforward, to prove the second we use the coupling in the proof of  Proposition \ref{prop:stochdom}. Recall the definition of the processes $\hat \alpha_i(s)$ defined on $[t_{i-1},t_i]$. These were independent for different values of $i$ and the process $\hat \alpha_i(s+t_{i-1}), s\in[0,t_i-t_{i-1}]$ had the same distribution as $\wt \alpha_\lambda(s), s\in[0,t_i-t_{i-1}]$. We also had
\begin{align*}
\hat \alpha_i(s)+\lfloor \wt \alpha_\lambda(t_{i-1})\rfloor_{2\pi} \le\wt \alpha_\lambda(s)\le  \hat \alpha_i(s)+\lfloor \wt \alpha_\lambda(t_{i-1})\rfloor_{2\pi}+2\pi.
\end{align*}
for $s\in [t_{i-1},t_i]$. By choosing $\lambda>\lambda_0=4\pi k/\eps$ the inequality (\ref{second}) follows by the independent increment property of the Brownian motion. 

By Proposition \ref{prop:lowerlin} we have the bound
\begin{align*}
\lim\limits_{\eps\to 0} \liminf_{\lambda\to \infty} \tfrac{1}{\lambda^2} \log P(\|\frac{1}{\lambda} \wt \alpha_\lambda(\cdot) -g(\cdot)\|\le \eps)\ge 
-\sum_{i=0}^{k-1} (T_{i+1}-T_i) \ldp(q_i)= -\int_0^T \ldp\left(g'(t)\right) dt
\end{align*}
from which (\ref{openlwr})  and thus the proof of the lower bound follows. 
\end{proof}
%Now that we have the full proof of the path level large deviations for $\lambda^{-1} \wt \alpha_\lambda(\cdot)$ we have all the ingredients for the proof of Theorem \ref{thm:schr1} which will be given in Section \ref{endpoint}.
%\begin{proof}[Proof of Theorem \ref{thm:schr1}]
%From Theorem \ref{thm:pathschr} and  the Contraction Principle (see e.g.~\cite{DemboZeitouni}) for any fixed $T$ the sequence of random variables satisfy a large deviation principle with scale $\lambda^2$ and rate function 
%\begin{align}\label{contrprinc}
%I_\tau(y)=\inf \left\{\int_0^T \ldp(g'(t))dt: g\in C[0,T], \, g(T)=y   \right\}.
%\end{align}
%The variational problem in (\ref{contrprinc}) can be solved using the usual perturbation method: 

%\benedek{Fill this in!}

%The final step is to note that if $\phi_\lambda$ solves (\ref{schrSDE1}) then $\phi_\lambda(t)-\phi_0(t)$ solves the SDE \diane{XXX} and Proposition \ref{prop:SDEprop} shows that the number of elements in $ \{\nu: \phi_\nu(T)\in 2\pi \Z, \nu\in [0,\lambda]   \}$ and $\lfloor \frac{\phi_\lambda(T)-\phi_0(T)}{2\pi} \rfloor$ differ only by at most one. This shows that the large deviation principle for $(2\pi \lambda)^{-1} \alpha_{\lambda/\tau}(\tau)$ is the same as for the normalized counting functions $\lambda^{-1} \wt N_\tau(\lambda)$ and the theorem follows. \benedek{Check various constants!}
%\end{proof}

\section{The path deviation for the $\Sineb$ process}\label{s:pathsine}

This section contains the proof of Theorem \ref{thm:pathsineb}.
The strategy for the proof  is to approximate the SDE (\ref{sineSDE2}) with a version where the drift is piecewise constant and then use elements of the proof of Theorem \ref{thm:pathschr}. Just as in the proof of Theorem {\ref{thm:pathschr}, we need to show an upper and a lower bound to prove the large deviation principle. The fact that $\Pathsine$ is a good rate function will be proved in Proposition \ref{prop:goodrate} of Section \ref{s:goodrate}.

\begin{proof}[Proof of the upper bound in Theorem \ref{thm:pathsineb}]

For the proof of the upper bound we go through a series of approximations: we essentially cut of the tail of the process, then replace the drift in the SDE with a piecewise constant version and then approximate the process with a piecewise linear version. Recall that $\alpha_\lambda(t)$ solves the SDE (\ref{sineSDE1}) and that we introduced the notation $\drift(t)=\tfrac{\beta}{4}e^{\tfrac{\beta}{4}t}$. Fix $T>0$, the value of which will go to infinity later. The first approximating process is defined as 
\begin{align*}
\alpha_\lambda^{(1)} (t)& = \alpha_\lambda (t) \ind(t\leq T)+ (\alpha_\lambda(T)+ \lambda(e^{-\frac{\beta}{4}T} - e^{-\frac{\beta}{4}t})) \ind(t>T),
\end{align*}
this solves the SDE (\ref{sineSDE1}) with the noise `turned off' at $t=T$. For the second process we define
\begin{align}\label{Pproj}
\drift_N(t)= \drift(Ti/N), \qquad  t\in [Ti/N,T(i+1)/N)
\end{align}
and consider the solution $\xi_{\lambda \drift_N}$ of (\ref{xi}) with drift $\lambda \drift_N$ and initial condition 0. Let 
\begin{align}
\alpha_\lambda^{(2)}(t) & = \xi_{\lambda \drift_N} (t) \ind(t\leq T)+ ( \xi_{\lambda \drift_N} (T)+ \lambda(e^{-\frac{\beta}{4}T} - e^{-\frac{\beta}{4}t})) \ind(t>T).\notag
\end{align}
Finally, let $\pi_{MN}$ is the projection defined in (\ref{pi}) with intervals of size $T/MN$, that is $\pi_{MN} f$ is the piecewise linear path that satisfies
\[ 
(\pi_{MN} f)(Ti/(MN)) = \lfloor f(Ti/(MN))\rfloor_{2\pi},
\]
and is linear between these values. Define
\begin{align}
\alpha_\lambda^{(3)}(t)& = \pi_{MN} \xi_{\lambda \drift_N} (t) \ind(t\leq T)+ (\pi_{MN} \xi_{\lambda \drift_N} (T)+ \lambda (e^{-\frac{\beta}{4}T} - e^{-\frac{\beta}{4}t})) \ind(t>T).\notag
\end{align}
Then for any closed set $K\subset C[0,\infty)$ we have that 
\begin{align}
P\left( \frac{\alpha_\lambda}{\lambda} \in K\right) & \leq P\Big( \frac{\alpha_\lambda^{(3)}}{\lambda} \in K^{3\delta} \Big)+ P(\|\alpha_\lambda^{(1)}- \alpha_\lambda \|_\infty \geq \delta \lambda )\notag \\
& \hspace{2.5cm}+ P(\|\alpha_\lambda^{(2)}- \alpha_\lambda^{(1)} \|_\infty \geq \delta \lambda )+P(\|\alpha_\lambda^{(3)}- \alpha_\lambda^{(2)} \|_\infty \geq \delta \lambda ),\label{together}
\end{align}
where $K^{3\delta}$ is defined similarly to (\ref{Kfat}), as the $3\delta$-fattening of $K$. 
We will begin with the main term.  Let 
\begin{equation}\notag
\mathcal{J}_N (g)= \int_0^\infty \drift_N^2(t) \ldp \left( \frac{g'(t)}{\drift_N(t)}\right)dt,
\end{equation}
and define (similarly to the $\tilde \alpha_\lambda$ case in the proof of Theorem \ref{thm:pathschr})
\[
\Delta \alpha_i = \frac{MN}{\lambda \drift_N(\tfrac{T i}{MN})T} \left( \lfloor \alpha^{(3)}(Ti/(MN))\rfloor_{2\pi}- \lfloor \alpha^{(3)}(T(i-1)/(MN))\rfloor_{2\pi} \right), \text{ for } 1\leq i \leq MN .
\]
Then,
\begin{align*}
P\Big( \frac{\alpha_\lambda^{(3)}}{\lambda} \in K^{3\delta} \Big)& \leq P\Bigg( \mathcal{J}_N\bigg( \frac{\alpha_\lambda^{(3)}}{\lambda} \bigg) \geq \inf_{g\in K^{3\delta}} \mathcal{J}_N(g) \Bigg)\\
&= P \Bigg( \sum_{i=1}^{ MN} \frac{T(\drift_N(Ti/(MN)))^2}{MN} \ldp (\Delta \alpha_i) \geq \inf_{g\in K^{3\delta}} \mathcal{J}_N(g) \Bigg).
\end{align*}
Take $\widehat \alpha_i$ to solve (\ref{schrSDE0}) but with the Brownian motion $B(t+ Ti/(MN))-B(T i/(MN))$ and $\lambda_i= \lambda \drift_N(Ti/(MN))$.  Then using the same arguments as in the  bound  (\ref{upper1}) we get
\begin{align}
P\Big( \frac{\alpha_\lambda^{(3)}}{\lambda} \in K^{3\delta} \Big)& \leq e^{-(1-\eps) \lambda^2 C_{\delta,N}}
\prod_{i=1}^{ MN} \left(E e^{(1-\eps) \lambda_i^2 \frac{T}{MN}\left((1+ \frac{2\pi MN}{\lambda T})\ldp\left(\frac{ \lfloor \widehat \alpha_i(T/(MN))\rfloor_{2\pi}}{ \lambda_i (T/MN) }\right)+c_2 \frac{2\pi M N}{\lambda_i T}\right)}\right) \notag
\end{align}
where $C_{\delta,N}=\inf_{g\in K^{3\delta}} \mathcal{J}_N(g)$.  Using the bound proved in Lemma \ref{lem:expmom} we  get that 
\begin{align}
\limsup_{\lambda\to \infty} \frac{1}{\lambda^2}\log P\Big( \frac{\alpha_\lambda^{(3)}}{\lambda} \in K^{3\delta} \Big)& \leq 
-(1-\eps)C_{\delta,N}\label{main}
%e^{-(1-\eps) \lambda^2 C_{\delta,N}}
%\prod_{i=1}^{ MN } \frac{4}{\varepsilon} e^{\lambda_i c_1 (T/(MN)+1)}\\
%&\leq e^{-(1-\eps) \lambda^2 C_{\delta,N}} \left( \frac{4}{\varepsilon}\right)^{MN} e^{\lambda c_1 (MN+T)}
\end{align}
%%%%%%%%%%%%%%%%%%%%%%%%%%%%%%%%%%%%%%%%%%%%%%%%%%%%%%%%%%%%%%%%%%%%%%%%%%%%%%
We now turn to the first error term.  Using the fact that $\lfloor \alpha_\lambda \rfloor_{2\pi}$ is non-decreasing (which follows from (i) of  Proposition \ref{prop:SDEprop}) we get that 
\[
\|\alpha_\lambda^{(1)}-\alpha_\lambda\|\le \alpha_\lambda(\infty)-\alpha_\lambda(T)+\lambda e^{-\tfrac{\beta}{4}T},
\]
where $\alpha_\lambda(\infty)$ is the limit of $\alpha_\lambda(t)$ as $t\to \infty$.   Choose $T$ large enough so that $e^{-\frac{\beta}{4}T} \leq \delta/2$.  Then
\begin{align}
P(\| \alpha_\lambda^{(1)}- \alpha_\lambda\| \geq \delta \lambda)& \leq P(\alpha_\lambda(\infty)- \alpha_\lambda (T) \geq \delta \lambda/2)\notag
\end{align}
We will deal with this tail probability in Proposition \ref{tailbound} below. In particular, we will show that there is a constant $c_1>0$ so that
\begin{align}
\limsup_{\lambda\to \infty}\frac{1}{\lambda^2} \log P(\| \alpha_\lambda^{(1)}- \alpha_\lambda\| \geq \delta \lambda)\leq -c_1   T \delta^2.\label{error1}
\end{align}
%%%%%%%%%%%%%%%%%%%%%%%%%%%%%%%%%%%%%%%%%%%%%%%%%%%%%%%%%%%%%%%%%%%%%%%%%%%%%%
For the second error term we first note that $ \| \alpha_\lambda^{(2)}- \alpha_\lambda^{(1)}\| =\sup_{t\in[0,T]} | \alpha_\lambda^{(2)}(t)- \alpha_\lambda^{(1)}(t)|$.  Using the coupling of Proposition \ref{prop:SDEprop} we can show that on $[0,T]$ the process $\alpha_\lambda^{(2)}- \alpha_\lambda^{(1)}$ will have the same distribution as the solution of the SDE (\ref{xi}) with initial condition 0 and drift $\lambda(\drift_N-\drift)\ge 0$. Moreover, this process will be non-negative (because the drift is non-negative), and since $\lambda(\drift_N(t)-\drift(t))\le \lambda\tfrac{\beta T}{4N}$ for $t\in[0,T]$, it will be bounded 
by the solution of the SDE (\ref{xi}) with a constant drift $ \lambda\tfrac{\beta T}{4N}$.
 Because of this $ \| \alpha_\lambda^{(2)}- \alpha_\lambda^{(1)}\|$ is stochastically bounded by $\sup_{t\in[0,T]} \wt \alpha_{\lambda\tfrac{\beta T}{4N}}(t)\le\wt \alpha_{\lambda\tfrac{\beta T}{4N}}(T)+2\pi$ with $\wt \alpha_\lambda$ from (\ref{schrSDE0}, using the fact that $\lfloor \wt \alpha_\lambda(t)\rfloor_{2\pi}$ is non-decreasing. Thus for $\delta \lambda>4\pi$ we have
\begin{align}
P( \| \alpha_\lambda^{(2)}- \alpha_\lambda^{(1)}\| \geq \delta \lambda)&  \leq P( \tilde \alpha_{\lambda\tfrac{\beta T}{4N}}(T) \geq \tfrac12 \delta \lambda).\notag
\end{align}
 If $N$ and $T$ are fixed then if $\lambda$ is big enough then we can  apply Lemma \ref{lem:tailbnd} for the right hand side with $\tilde \lambda=\lambda\tfrac{\beta T}{4N}$, $t=T$ and $q=\frac{\tfrac12 \delta \lambda}{T \lambda\tfrac{\beta T}{4N}}=\frac{2\delta N}{\beta T^2}$. This leads to
\begin{equation}
\limsup_{\lambda\to \infty} \frac{1}{\lambda^2} \log P( \| \alpha_\lambda^{(2)}- \alpha_\lambda^{(1)}\| \geq \tfrac12 \delta \lambda) \leq
-\frac{\beta^2 T^3}{4^2N^2}\ldp \left( \frac{2\delta  N}{\beta T^2} \right).\label{error2}
% \left( \frac{\lambda^2c^2}{N^2}T^3- \frac{\lambda c c_1T}{N}(T+1) \right)\ldp \left( \frac{\delta c N}{T^3} \right)+ \frac{\lambda c c_1T}{N}(T+1) \right]
\end{equation}
%%%%%%%%%%%%%%%%%%%%%%%%%%%%%%%%%%%%%%%%%%%%%%%%%%%%%%%%%%%%%%%%%%%%%%%%%%%%%%%%%%%
For the third error term we first note that 
\[
\| \alpha_\lambda^{(3)}-\alpha_\lambda^{(2)}\|\le \sup_{t\in[0,T]} |\alpha_\lambda^{(3)}(t)-\alpha_\lambda^{(2)}(t)|\le
\max_{i} \sup_{t\in [Ti/N,T(i+1)/N]} |\alpha_\lambda^{(3)}(t)-\alpha_\lambda^{(2)}(t)|,
\]
and thus
\[
P(\| \alpha_\lambda^{(3)}-\alpha_\lambda^{(2)}\| \geq \delta \lambda) \le \sum_{i=0}^{N-1} P\left( \sup_{t\in [Ti/N,T(i+1)/N]} |\alpha_\lambda^{(3)}(t)-\alpha_\lambda^{(2)}(t)|\ge \delta \lambda\right).
\]
In the interval $[Ti/N,T(i+1)/N]$ the process $\alpha_\lambda^{(2)}$ solves the SDE (\ref{schrSDE0}) with constant drift $\lambda \drift_N(Ti/N)$. Here we can use the same steps that we used in the proof of Theorem \ref{thm:pathschr} between (\ref{12345}) and (\ref{23456}) to get 
\begin{align}
P(\| \alpha_\lambda^{(3)}-\alpha_\lambda^{(2)}\| \geq \delta \lambda) &\leq  \sum_{i=1}^{ N-1} M P\left( \tilde \alpha_{\lambda \drift_N(T i/N)}(T/(MN)) \geq \delta \lambda/2\right)\notag\\\notag
&\le MN P\left( \tilde \alpha_{\tfrac{\beta}{4}\lambda }(T/(MN)) \geq \delta \lambda/2\right)
%\\
%& \leq MN\  P\left( \tilde \alpha_{\lambda}(T/(MN)) \geq \delta \lambda/2\right)\\
%&\leq MN  e^{-(\lambda^2 \frac{T}{MN}+ \lambda c_1(T/(MN)+1)) \ldp\left(\frac{\delta MN}{2T}\right)+\lambda c_1(T/(MN)-1) }
\end{align}
for $\lambda$ big enough compared to $\delta^{-1}$. For large enough $\lambda$ we can apply Lemma \ref{lem:tailbnd} for the right hand side with $\tilde \lambda=\tfrac{\beta}{4}\lambda$, $t=T/(MN)$ and $q=\tfrac{2\delta M N}{\beta T}$ to get
\begin{align}
\limsup_{\lambda\to \infty} \frac{1}{\lambda^2}\log P(\| \alpha_\lambda^{(3)}-\alpha_\lambda^{(2)}\| \geq \delta \lambda) \le
-\frac{\beta^2}{4^2}\frac{T}{MN} \ldp\left(\frac{2\delta MN}{\beta T}\right). \label{error3}
\end{align}
Now taking (\ref{together}) with the bounds (\ref{main}), (\ref{error1}), (\ref{error2}) and (\ref{error3}) we get
\begin{align}
\label{preveq}
\limsup_{\lambda\to \infty} &\frac{1}{\lambda^2} \log P \Big( \frac{\alpha_\lambda}{\lambda} \in K \Big) \\
& \leq \max \left\{ -(1-\varepsilon)C_{\delta,N}, -c_1 T \delta^2 ,-\frac{\beta^2 T^3}{4^2N^2}\ldp \left( \frac{2\delta  N}{\beta T^2} \right) ,-\frac{\beta^2}{4^2}\frac{T}{MN} \ldp\left(\frac{2\delta MN}{\beta T}\right)\right\}. \notag
\end{align}
Taking $N$ to $\infty$ the last two terms go to $-\infty$ (using the bounds (\ref{ldptail1})) while the  first term converges to $(1-\eps)C_\delta^T$ with 
\[
C_{\delta}^T= \inf_{g\in K^{3\delta}}\int_0^T \drift^2(t) \ldp \left( g'(t)/\drift(t)\right)dt
\]
Letting now $T\to \infty$ and then $\eps\to 0$ we get
\[
\limsup_{\lambda\to \infty} \frac{1}{\lambda^2} \log P \Big( \frac{\alpha_\lambda}{\lambda} \in K \Big) \leq -\inf_{g\in K^{3\delta}} \Pathsine(g).
\]
Finally taking $\delta\to 0$ and using the fact that $\Pathsine$ is a good rate function gives the result
\[
\limsup_{\lambda\to \infty} \frac{1}{\lambda^2} \log P \Big( \frac{\alpha_\lambda}{\lambda} \in K \Big) \leq -\inf_{g\in K}\Pathsine(g).
\]
This completes the proof of the lower bound.
\end{proof}
We now prove the  tail bound for the proof of the lower bound. 
\begin{proposition}
\label{tailbound}
Fix $T, \delta>0$, then there is a constant $c>0$ so that 
\begin{align}\label{tail11}
\limsup_{\lambda \to \infty}\frac{1}{\lambda^2} \log P( \alpha_\lambda(\infty)- \alpha_\lambda(T) \geq \delta \lambda) \leq  -c T \delta^2.
\end{align}
\end{proposition}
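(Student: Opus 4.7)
The plan splits into two steps: a strong-Markov reduction to a tail bound for $\alpha_\mu(\infty)$ with $\mu = \lambda e^{-\beta T/4}$, and then a discretization that combines Lemma \ref{lem:tailbnd} on a bulk region with Proposition \ref{prop:SDEprop}(iii) on the far tail.

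For the reduction, I would apply the strong Markov property of $\alpha_\lambda$ at time $T$. Conditionally on $c = \alpha_\lambda(T)$, the shifted process $\alpha_\lambda(T+\cdot)$ has the distribution of $\xi_{g, c}$ with $g(s) = \lambda \drift(T+s) = \mu\,\drift(s)$, and by Proposition \ref{prop:SDEprop}(ii) we can decompose it jointly as $\xi_{0,c}+\xi_{g, 0}$. The zero-drift piece $\xi_{0, c}$ has diffusion vanishing at every $2\pi\mathbb{Z}$, so by part (i) of the same proposition it is trapped between the two consecutive multiples of $2\pi$ bracketing $c$; in particular $|\xi_{0, c}(\infty)-c|\le 2\pi$. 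The piece $\xi_{g, 0}$ is equal in law to $\alpha_\mu$. Taking $\lambda$ large enough that $2\pi < \delta\lambda/4$, I obtain
\[
P\bigl(\alpha_\lambda(\infty) - \alpha_\lambda(T) \ge \tfrac12 \delta\lambda\bigr) \;\le\; P\bigl(\alpha_\mu(\infty) \ge \tfrac14\delta\lambda\bigr),
\]
and the right-hand side asks $\alpha_\mu$ to exceed its typical value $\mu$ by a factor $\rho := (\delta/4)\,e^{\beta T/4}$.

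For the tail bound on $\alpha_\mu(\infty)$, I would partition the time axis into intervals $[kL_0,(k+1)L_0)$ with $L_0$ a fixed constant to be chosen, and reapply the same strong-Markov/coupling step to dominate each increment $\Delta_k$ distributionally by $\tilde\alpha_{\mu_k\beta/4}(L_0)+4\pi$, where $\mu_k = \mu\,e^{-\beta kL_0/4}$ (using Proposition \ref{prop:SDEprop}(ii) once more to replace the decaying drift by the constant $\mu_k\beta/4$ that matches it at the left endpoint). On the event $\{\alpha_\mu(\infty) \ge \delta\lambda/4\}$ at least one $\Delta_k$ must exceed its share $a_k$ of the budget $\sum_k a_k=\delta\lambda/4$. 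I would pick the $a_k$ geometrically so that the ratio $q_k = 4a_k/(\beta\mu_k L_0)$ equals a common value $q^\ast = \Theta(\rho)$; this makes the leading exponent of Lemma \ref{lem:tailbnd}, when combined with the large-$q$ asymptotic $\ldp(q^\ast)\sim\tfrac12 q^{\ast2}\log q^\ast$ from Proposition \ref{prop:ldpasympt}, collapse to a universal $c\beta T\delta^2\lambda^2$ on the first interval, weighted by a geometric factor $e^{-\beta kL_0/2}$ on the $k$-th interval.

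The main obstacle is the loss of the $\lambda^2$-scale in the Lemma \ref{lem:tailbnd} bound for large $k$, where $e^{-\beta kL_0/2}$ shrinks. To address this I would truncate the partition at $K\asymp \log\lambda$, so that $\mu_K$ is a small negative power of $\lambda$, perform one final strong-Markov reduction of $\alpha_\mu(\infty)-\alpha_\mu(KL_0)$ to $\alpha_{\mu_K}(\infty)+2\pi$, and bound that by Proposition \ref{prop:SDEprop}(iii) with the optimal integer $k^\ast = \Theta(\pi\delta\lambda/(4e\mu_K))$; because $\mu_K\lambda^{-1}$ is a negative power of $\lambda$, this tail bound is super-exponential in $\lambda$ and trivially beats $e^{-cT\delta^2\lambda^2}$. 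The union bound over the $K=O(\log\lambda)$ early intervals costs only a $\log\lambda$ prefactor, absorbed into a slightly smaller constant $c$. Taking $\log$, dividing by $\lambda^2$ and passing $\lambda\to\infty$ then yields $\limsup\lambda^{-2}\log P\le -cT\delta^2$ with an explicit $c=c(\beta)>0$.
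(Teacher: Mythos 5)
Your overall strategy --- reduce via the strong Markov property and the coupling of Proposition \ref{prop:SDEprop} to a tail bound for $\alpha_\mu(\infty)$ with $\mu=\lambda e^{-\beta T/4}$, chop time into blocks, allocate a budget to each increment, control each block with Lemma \ref{lem:tailbnd}, and finish the far tail with Proposition \ref{prop:SDEprop}(iii) --- is the same as the paper's, and your first reduction (writing $\xi_{g,c}=\xi_{0,c}+(\xi_{g,c}-\xi_{0,c})$ with $|\xi_{0,c}-c|\le 2\pi$) is correct. The gap is in the block decomposition. With blocks of fixed length $L_0$ and budgets $a_k\propto\mu_k$ chosen so that the overshoot ratio $q_k$ equals a constant $q^\ast$, the exponent you extract on block $k$ is, as you yourself compute, of order $\delta^2\lambda^2T\,e^{-\beta kL_0/2}$. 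This is $o(\lambda^2)$ as soon as $k\to\infty$ with $\lambda$: for $k$ of order $\log\lambda$ the individual upper bounds are no longer $e^{-\Theta(\lambda^2)}$ (they in fact tend to a constant near your truncation point), so the union bound over $k\le K\asymp\log\lambda$ is dominated by its last terms and cannot yield $e^{-cT\delta^2\lambda^2}$. The ``$\log\lambda$ prefactor'' accounting presupposes that every term is at most $e^{-cT\delta^2\lambda^2}$, which is exactly what fails; truncating at $K\asymp\log\lambda$ does not cure this because the obstruction lies at intermediate $1\ll k<K$, not in the final tail. (Two secondary problems: the hypotheses of (\ref{uptail}) --- that $qt\tilde\lambda$ and $\tilde\lambda q\log q$ be large --- are violated once $\mu_k$ is small, since $q^\ast$ is fixed; and the final Proposition \ref{prop:SDEprop}(iii) step gives only $e^{-c\delta\lambda/\mu_K}$, which beats $e^{-cT\delta^2\lambda^2}$ only if $\mu_K\lesssim(T\delta\lambda)^{-1}$, not merely when $\mu_K$ or $\mu_K\lambda^{-1}$ is a small negative power of $\lambda$.)

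The paper's bookkeeping is designed precisely to avoid this: it takes blocks $[T_k,T_{k+1}]$ with quadratically growing endpoints $T_k=\tfrac{k(k+1)}2\theta T$ and only polynomially decaying budgets $a_k\propto\delta\lambda\,k^{-(1+\nu)}$, $\nu=\tfrac18$. Then the overshoot $q_k$ blows up like $e^{ck^2}$, and the factor $\log^2 q_k\gtrsim T_k^2\asymp k^4$ in (\ref{uptail}) overcompensates the decay of $a_k^2/(T_{k+1}-T_k)\asymp\delta^2\lambda^2k^{-3-2\nu}/T$, keeping every term below $e^{-c\delta^2\lambda^2Tk^{1-2\nu}}$ with the $k=1$ term dominating the sum; the truncation then naturally goes out to $T_K\asymp\lambda$, where the residual drift mass is $e^{-c\lambda}$ and Proposition \ref{prop:SDEprop}(iii) comfortably gives $e^{-c\delta T\lambda^2}$. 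This choice is essentially forced: one needs $a_k^2\log^2(q_k)/(T_{k+1}-T_k)\gtrsim\delta^2\lambda^2T$ for every $k$ together with $\sum_k a_k\lesssim\delta\lambda$ and $\log q_k\lesssim T_k$, hence $\sum_k\sqrt{T_{k+1}-T_k}\,/\,T_k<\infty$, which rules out blocks of fixed length. To repair your argument you must therefore abandon the fixed $L_0$ and the geometric budgets; with super-linearly growing $T_k$ and sub-geometric budgets the rest of your outline goes through along the paper's lines.
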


\begin{proof}[Proof of Proposition \ref{tailbound}]

Take $\nu=1/8$, and set $T_k=\tfrac{k(k+1)}{2}\theta T$ where the value of $\theta>0$ will be specified later. Then we can break up the probability in question as 
\begin{align}\notag
P(\alpha_{\lambda}(\infty)&-\alpha_\lambda(T) \geq \delta \lambda)\le \sum_{k=1}^{\lfloor 2 \sqrt{\lambda} \rfloor } P(\alpha_{\lambda}(T_{k+1})-\alpha_{\lambda}(T_k)\ge \tfrac{\delta}{4} \lambda \nu k^{-(1+\nu)})\\&\hskip150pt+P(\alpha_{\lambda}(\infty)-\alpha_{\lambda}(T_{ \lfloor 2 \sqrt{\lambda}\rfloor +1})\ge \delta \lambda/2).\label{ltail1}
\end{align}
Note, that for any fixed $s>0$ the process $\widehat \alpha_{s,\lambda}(t)=\alpha_\lambda(s+t)$ satisfies the SDE  (\ref{sineSDE2}) with $\hat \lambda=\lambda e^{-\tfrac{\beta}{4}s}$ with initial condition $\alpha_\lambda(s)$. Using the coupling techniques of Propositions \ref{prop:stochdom} and \ref{prop:SDEprop} one can show that $\widehat \alpha_{s,\lambda}(t)-\widehat \alpha_{s,\lambda}(0)=\alpha_\lambda(s+t)-\alpha_\lambda(s)$ is stochastically dominated by $\wt \alpha_{\lambda \drift(s)}+2\pi$. 
This (together with $T_{k+1}-T_k=\theta (k+1)T$) gives
\begin{align*}
 P(\alpha_{\lambda}(T_{k+1})-\alpha_{\lambda}(T_k)\ge \tfrac{\delta}{4} \lambda \nu k^{-(1+\nu)})&\le  P(\wt \alpha_{\lambda \drift(T_k)}(\theta(k+1)T)\ge \tfrac{\delta}{4} \lambda \nu k^{-(1+\nu)}-2\pi)\\
 &\le  P(\wt \alpha_{\lambda \drift(T_k)}(\theta(k+1)T)\ge \tfrac{\delta}{8} \lambda \nu k^{-(1+\nu)})
\end{align*}
where the last bound follows for big enough $\lambda$ from  $k\le 2\sqrt{\lambda}$. 
We can use bound (\ref{uptail}) of Lemma \ref{lem:tailbnd} for the probability on the right with $\tilde \lambda =\lambda \drift(T_k)$, $t=\theta (k+1)T$ and $q=\frac{\delta \nu k^{-(1+\nu)}}{8\theta T (k+1)\drift(T_k)}$, since with these choices $q t \tilde \lambda$, $q$ and $\tilde \lambda q \log q$ are all big, if we choose $\theta>0$ small enough and then  $\lambda$ big enough.
This leads to
\begin{align*}
P(\wt \alpha_{\lambda \drift(T_k)}(T)\ge \tfrac{\delta}{4} \lambda \nu k^{-(1+\nu)})&\le \exp\left(-c_1  \tfrac{\delta^2}{8^2} \lambda^2 \nu^2 k^{-2(1+\nu)}\theta^{-1} (k+1)^{-1}T^{-1} \log^2\left(\frac{\delta \nu k^{-(1+\nu)}}{8\theta (k+1)T \drift(T_k)}   \right)  \right)\\
&\hskip-20pt\le \exp\left(-c_2 \delta^2 \lambda^2 k^{-3-2\nu} T^{-1} \left(c_3+\tfrac{\beta}{4} T \tfrac{k(k+1)}{2}\right)^2\right)\le \exp\left(-c_4 \lambda^2 \delta^2 T k^{1-2\nu}   \right),
\end{align*}
with a positive constant $c_4$, which in turn implies (for large enough $\lambda$) 
\begin{align}
\sum_{k=1}^{\lfloor 2 \sqrt{\lambda} \rfloor } P(\alpha_{\lambda}(T_{k+1})-\alpha_{\lambda}(T_k)\ge \tfrac{\delta}{4} \lambda \nu k^{-(1+\nu)})\le 2 \exp\left(-c_4 \lambda^2 \delta^2 T   \right).\label{sumbnd}
\end{align}
Lastly we bound the remaining term using Proposition \ref{prop:SDEprop}:
\begin{align}
P(\alpha(\infty)- \alpha(T\lambda) \geq \delta \lambda/2)& = P( \xi_{\lambda \drift (\lambda T)}(\infty) \geq \lfloor \delta \lambda/ 2\rfloor) \leq 2 \left( e^{-\frac{\beta}{4} \lambda T}\right)^{\lfloor \delta \lambda/ 2\rfloor},\notag
\end{align}
which together with (\ref{ltail1}) and (\ref{sumbnd}) gives us the necessary upper bound for (\ref{tail11}).
\end{proof}

\begin{proof}[Proof of the lower bound in Theorem \ref{thm:pathsineb}]
%
%
%Let $g$ be a function in $C[0,\infty]$ (i.e.~the limit $g(\infty)=\lim\limits_{t\to \infty} g(t)$ exists) for which $\Pathsine(g)$ is finite. Note, that this means that $g$ is non-decreasing and $g(t)=\int_0^t g'(s) ds$.
%
%
We will show that if $g\in C[0,\infty)$ with $\Pathsine(g)<\infty$ then 
\begin{align}\label{lowerbndgoal}
\lim\limits_{\eps\to 0} \liminf_{\lambda\to \infty} \tfrac{1}{\lambda^2} \log P(\|\lambda^{-1} \alpha_\lambda(\cdot)-g(\cdot) \|\le \eps)\ge -\Pathsine(g).
\end{align}
From this the lower bound will follow.

In Proposition \ref{prop:goodrate} of the Appendix we will prove that if $\Pathsine(g)<\infty$ then $g(\infty)=\lim\limits_{t\to \infty} g(t)<\infty$ exists. Let $\eps>0$ and choose $T>0$ so that 
\begin{align}
g(\infty)-g(T)\le \eps/2, \qquad \textup{and} \qquad e^{-\tfrac{\beta}{4}T}\le \eps/4.\label{assmpt}
\end{align}
From the first assumption in (\ref{assmpt}) and the Markov property we have
\begin{align}\notag
&P(|\lambda^{-1} \alpha_\lambda(t)-g(t)|\le \eps, t\ge 0)\\
&\hskip20pt\ge P(|\lambda^{-1} \alpha_\lambda(t)-g(t)|\le \eps/2, t\in [0,T],  |\alpha_\lambda(\infty)-\alpha_\lambda(T)|\le \lambda\eps/4)\label{qqq}
\\ &\hskip20pt \ge P(|\lambda^{-1} \alpha_\lambda(t)-g(t)|\le \eps/2, t\in [0,T]) \sup_x P(\alpha_\lambda(\infty)- \alpha_\lambda(T)\le \lambda \eps/4 \big\vert \alpha_\lambda(T)=x).\notag
\end{align}
Using the same line of reasoning as in the proof of Proposition \ref{tailbound} (see after (\ref{ltail1})) we get that with $\lambda_T=\lambda e^{-\tfrac{\beta}{4}T}$ we have
%
%
%
%Note, that $\widehat \alpha_{T,\lambda}(t)=\alpha_\lambda(T+t)$ satisfies (\ref{sineSDE2}) with $\widehat \lambda=\lambda e^{-\tfrac{\beta}{4}T}$ with initial condition given by $\alpha_\lambda(T)$. Thus by Proposition \ref{prop:SDEprop} we have
\begin{align*}
&P(\alpha_\lambda(\infty)- \alpha_\lambda(T)\le \lambda \eps/4 \vert \alpha_\lambda(T)=x)\\
&\hskip100pt\ge P(\alpha_{ \lambda_T}(\infty)\le \lambda \eps/4-2\pi)\ge P(\alpha_{ \lambda_T}(\infty)\le \lambda \eps/8),
\end{align*}
where the second inequality follows if  $\lambda$ is big enough compared to $\eps$. Now we can use part (iii) of Proposition \ref{prop:SDEprop} with $f(t)=\lambda_T \drift(t)$, $k=1$ and $a=\lambda \eps/8$  to get
\begin{align*}
P(\alpha_{ \lambda_T}(\infty)\le \lambda \eps/8)=1-P(\alpha_{ \lambda_T}(\infty)> \lambda \eps/8)\ge 1-2\frac{8 \lambda_T}{2\pi \lambda \eps }\ge 1-\frac{2}{\pi},
\end{align*}
where the last step follows from the second assumption of (\ref{assmpt}).

Using this with (\ref{qqq}) we get  that 
\[
\liminf_{\lambda\to \infty} \frac{1}{\lambda^2} \log P(|\lambda^{-1} \alpha_\lambda(t)-g(t)|\le \eps, t\ge 0)\ge \liminf_{\lambda\to \infty} \frac{1}{\lambda^2} \log P(|\lambda^{-1} \alpha_\lambda(t)-g(t)|\le \eps/2, t\in[0,T]),
\]
and it is enough to estimate the right hand side. 
 We do this by introducing the process $\xi_N(t)$ on $[0,T]$ which is a solution of the SDE (\ref{xi}) with initial condition 0 and  the piecewise constant drift function $\lambda \drift_N$ where $\drift_N$ is defined as in (\ref{Pproj}).  From Proposition \ref{prop:SDEprop} we have that $\alpha_\lambda(t)\le \xi_N(t)$ and $\widehat \xi_N(t)=\xi_N(t)-\alpha_\lambda(t)$ satisfies SDE (\ref{xisine2}) with initial condition 0 and drift $\lambda (\drift_N(t)-\drift(t))$.  
We have
\begin{align}\label{term1}
P(|\lambda^{-1} \alpha_\lambda(t)-g(t)|\le \eps/2, t\in [0,T])\ge& P(|\lambda^{-1} \xi_N(t)-g(t)|\le \eps/4, t\in [0,T])\\&\hspace{1cm}-P\bigg(\sup_{t\in [0,T]}|\xi_N(t)- \alpha_\lambda(t)|\ge \lambda\eps/4\bigg).\notag
\end{align}
The second term on the right may be bounded in the same manner as (\ref{error2}).  this gives us
\begin{equation}
\limsup_{\lambda\to \infty} \frac{1}{\lambda^2} \log P\bigg(\sup_{t\in [0,T]}|\xi_N(t)- \alpha_\lambda(t)|\ge \lambda\eps/4\bigg) \leq
-\frac{\beta^2 T^3}{4^2N^2}\ldp \left( \frac{\eps  N}{\beta T^2} \right).\notag
\end{equation}
% is less than  $P(\widehat \xi_N(T)\ge \lambda \eps/4)$. The process $\widehat \xi_N(t)$ is stochastically dominated by $\wt \alpha_{\delta  \lambda}(t)$ where $\delta=\sup_{t\in [0,T]} (\drift_N(t)-\drift(t))$. (Here $\delta=\delta_{T,N}$.) This means that
%\begin{align}
%P(\widehat \xi_N(T)\ge \lambda \eps/4)\le P(\wt \alpha_{\delta  \lambda}(T)\ge \lambda \eps/4).
%\end{align}
%If $\delta \lambda>2$ then  we can estimate the last probability by applying Lemma \ref{lem:tailbnd} with $\hat \lambda=\delta \lambda$, $t=T$ and $q=\tfrac{\eps}{4\delta T}$. This leads to
%\begin{align}
%\limsup_{\lambda\to \infty} \frac{1}{\lambda^2} \log P(\wt \alpha_{\delta  \lambda}(T)\ge \lambda \eps/4)\le -\delta^2 T\ldp (\tfrac{\eps}{4\delta T}).
%\end{align} 
%Note, that by choosing $N$ large, we can make $\delta$ as small as we wish, and by Proposition \ref{prop:ldpasympt}  we may make $\delta^2 \ldp (\tfrac{\eps}{4\delta T})$ as big as we wish. 
Note, that as $N\to \infty$ the right hand side converges to $-\infty$.

The only thing left is to estimate the first term on the right of (\ref{term1}). Introduce the notation $t_k=\tfrac{Tk}{N}$. 
We start with the bound
\begin{align*}
&\log P(|\lambda^{-1} \xi_N(t)-g(t)|\le \eps/4, t\in [0,T])\\
&\hskip50pt \ge P(|\lambda^{-1} (\xi_N(s+t_k)-\xi_N(t_k))-(g(s+t_k)-g(t_k))|\le \eps/(4N), s\in [0,T/N]).
\end{align*}
For any fixed $k$ the process $\xi_N(s+t_k), s\in [0,T/N]$ satisfies the SDE (\ref{xisine2}) with initial condition $\xi_N(t_k)$ and a constant drift $\lambda \drift_N(t_k)$. Using the coupling in the proof of Proposition \ref{prop:stochdom} we can construct independent processes $\widehat \alpha_k(t), t\in[0,T/N]$ so that 
\[
\widehat \alpha_k(s)-2\pi \le \xi_N(s+t_k)-\xi_N(t_k)\le \widehat \alpha_k(t)+2\pi, \quad s\in[0,T/N]
\]
and $\widehat \alpha_k(t), t\in[0,T/N]$ has the same distribution as $\wt \alpha_{\lambda \drift_N(t_k)}(t), t\in [0,T/N]$. 
From this it immediately follows that
\begin{align*}
&\liminf_{\lambda\to \infty} \tfrac{1}{\lambda^2} \log P(|\lambda^{-1} \xi_N(t)-g(t)|\le \eps/4, t\in [0,T])\\ 
&\quad \ge \sum_{k=0}^{N-1} \liminf_{\lambda\to \infty} \tfrac{1}{\lambda^2} \log P(|\lambda^{-1} \wt \alpha_{\lambda \drift_N(t_k)}(s)-(g(s+t_k)-g(t_k))|< \eps/(8N), s\in [0,T/N]).
\end{align*}
From our path level large deviation lower bound on $\wt \alpha$ we get
\begin{align*}
 &\liminf_{\lambda\to \infty} \tfrac{1}{\lambda^2} \log P(|\lambda^{-1} \wt \alpha_{\lambda \drift_N(t_k)}(s)-(g(s+t_k)-g(t_k))|\le \eps/(4N), s\in [0,T/N])\\
  &\hskip90pt \ge -\inf_{\substack{|\tilde g(s)-g(s)|<\eps/(8N)\\ s\in[t_k,t_{k+1}]}} \drift_N(t_k)^2 \int_0^{T/N} \ldp(\drift_N(t_k)^{-1} \tilde g'(t_k+s)) ds. 
\end{align*}
This yields the estimate
\begin{align*}
\liminf_{\lambda\to \infty} \tfrac{1}{\lambda^2} \log  P(|\lambda^{-1} \alpha_\lambda(t)-g(t)|\le \eps/4, t\in [0,T])&\ge
-\inf_{\substack{|\tilde g(s)-g(s)|< \eps/(8N),\\ s\in [0,T]}} \int_0^T \drift_N(s)^2 \ldp(\drift_N^{-1} \tilde g'(s)) ds.
\end{align*}
Letting $N\to \infty$ the lower bound converges to $-\int_0^T \drift(s)^2\ldp(\drift^{-1}(s) g'(s))ds$ which (together with our previous estimates) shows that
\[
\liminf_{\lambda\to \infty} \frac{1}{\lambda^2} \log P(|\lambda^{-1} \alpha_\lambda(t)-g(t)|\le \eps, t\ge 0)\ge -\int_0^T \drift(s)^2\ldp(\drift^{-1}(s) g'(s))ds.
\]
Letting $\eps\to 0$ we also have $T=T_\eps\to \infty$ which yields the bound (\ref{lowerbndgoal}) and concludes the proof of the lower bound in the large deviation principle.
\end{proof}

%%%%%%%%%%%%%%%%%%%%%%%%%%%%%%%%%%%%%%%%%%%%%%%%%%%%%%%%%%%%%%%%%%%%%%%%%%%%%%%%%%%%%%%%%%%%%%%%%%%%%%%%%%%%%%%%%%%%%%%%%%%%%%%%%%%%%%%%%%%%%%%%%%%%%%%%%%%%%

\section{$\Pathschr$ and $\Pathsine$ are good rate functions}\label{s:goodrate}

In this section we will show that $\Pathschr$ and $\Pathsine$ are good rate functions. Our main tools are the bound (\ref{ldptail1}) and the estimate
\begin{align}
\ldp(x)\ge c_1 (x-1)^2, \quad \textup{if $x>0$} \label{quadbnd}
\end{align}
both of which will be proved in Proposition (\ref{prop:ldpasympt}) of the Appendix.

\begin{proposition}\label{prop:goodrate}
The functions $\Pathsine(\cdot)$ and $\Pathschr(\cdot)$ are both good rate functions on the spaces $C[0,\infty)$ and $C[0,T]$ respectively. Moreover, if $g\in C[0,\infty)$ and $\Pathsine(g)<\infty$ then $\lim\limits_{t\to \infty} g(t)$ is finite. 
\end{proposition}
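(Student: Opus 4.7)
The plan is to handle three claims: (a) compactness of sublevel sets, (b) lower semicontinuity, and (c) for $\Pathsine$, that finite rate forces $g(\infty)<\infty$. For (a), I use the quadratic lower bound $\ldp(x)\ge c_1(x-1)^2$ from (\ref{quadbnd}). If $\Pathschr(g)\le c$ then $\int_0^T(g'(t)-1)^2\,dt\le c/c_1$, whence $\|g'\|_{L^2([0,T])}\le\sqrt{c/c_1}+\sqrt T$, and Cauchy--Schwarz yields a uniform modulus of continuity $|g(t)-g(s)|\le(\sqrt{c/c_1}+\sqrt T)\sqrt{|t-s|}$ together with uniform boundedness on the sublevel set. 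Arzel\`a--Ascoli then gives precompactness in $C[0,T]$. For $\Pathsine$ the same calculation applied on each compact $[0,T]$ yields $\|g'-\drift\|_{L^2([0,T])}$ bounded on the sublevel set (since $\drift$ is locally bounded), and a diagonal extraction provides precompactness in the topology of uniform convergence on compact subsets of $[0,\infty)$.

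For (b), suppose $g_n\to g$ with $\liminf\Pathschr(g_n)<\infty$. Passing to a subsequence, $\{g_n'\}$ is bounded in $L^2([0,T])$, so extract a further weakly convergent subsequence $g_n'\rightharpoonup v$; uniform convergence of $g_n$ to $g$ forces $v=g'$ in the distributional sense. Since $\ldp$ is convex and nonnegative on $\Rnn$ (extended to $+\infty$ on the negative half-line to enforce $g'\ge 0$), the classical Serrin--Ioffe weak lower semicontinuity of convex integral functionals gives $\int_0^T\ldp(g')\,dt\le\liminf_n\int_0^T\ldp(g_n')\,dt$, and the analogous statement with weight $\drift^2$ handles $\Pathsine$ on each $[0,T]$; monotone convergence in $T$ concludes the $\Pathsine$ case. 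The boundary conditions $g(0)=0$ and monotonicity of $g$ pass to the limit automatically.

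Claim (c) is the main obstacle. I would change variables $u=1-e^{-\beta t/4}$, so that $\drift\,dt=du$ maps $[0,\infty)$ to $[0,1)$; setting $h(u)=g(t(u))$ gives $h'(u)=g'(t(u))/\drift(t(u))$ and
\[
\Pathsine(g)=\tfrac{\beta}{4}\int_0^1(1-u)\,\ldp(h'(u))\,du,\qquad h(1)=g(\infty).
\]
Decompose $[0,1)$ dyadically by $I_k=[1-2^{-k},1-2^{-k-1}]$ (with $|I_k|=2^{-k-1}$), and set $S_k=\int_{I_k}h'\,du$, $M_k=\int_{I_k}\ldp(h')\,du$. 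The hypothesis yields $\sum_k 2^{-k-1}M_k<\infty$; Jensen's inequality on $I_k$ with the uniform measure gives $\ldp(2^{k+1}S_k)\le 2^{k+1}M_k$. Writing $c_k:=2^{-k}M_k$ (so $\sum c_k<\infty$) and using the superquadratic tail $\ldp(x)\ge c\,x^2\log^2 x$ for $x>2$ from (\ref{ldptail1}), this rearranges to $S_k\log(2^{k+1}S_k)\le C\sqrt{c_k}$ whenever $S_k>2^{-k}$. I split on the size of $S_k$: if $S_k\le 2^{-k/2}$ then $\sum 2^{-k/2}<\infty$ is summable directly; if $S_k>2^{-k/2}$ then $\log(2^{k+1}S_k)\gtrsim k$, hence $S_k\lesssim \sqrt{c_k}/k$, and Cauchy--Schwarz gives $\sum S_k\lesssim(\sum c_k)^{1/2}(\sum 1/k^2)^{1/2}<\infty$, so $h(1)=\sum_k S_k<\infty$. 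The crucial point is that the $\log^2$ factor in the tail bound (\ref{ldptail1}) is essential: a merely quadratic lower bound $\ldp(x)\gtrsim x^2$ would only yield $S_k\lesssim\sqrt{c_k}$, and $\sum\sqrt{c_k}<\infty$ does not follow from $\sum c_k<\infty$; so a coarser global Jensen argument, which only controls $\ldp(g(T)/Z_T)$, cannot conclude $g(\infty)<\infty$.
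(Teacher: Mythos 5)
Your arguments for $\Pathschr$ on $C[0,T]$, for lower semicontinuity, and for claim (c) are correct, and (c) is genuinely different from the paper: where you use a dyadic decomposition of $[0,1)$ plus Jensen on each block, the paper runs a generalized H\"older inequality in the Orlicz pair generated by $\Phi(x)=x^2\log^2(|x|+e)$ against the function $f(x)=\tfrac{1}{1-x}$, which lies in the complementary space precisely because of the extra $\log^2$; your closing remark that a bare quadratic bound cannot work is exactly the same observation, since $\tfrac1{1-x}\notin L^2((1-x)\,dx)$.

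The genuine gap is in (a) for $\Pathsine$. The LDP of Theorem \ref{thm:pathsineb} is with respect to the sup-norm on $[0,\infty)$ (the fattenings $K^{3\delta}$ and the lower-bound balls are sup-norm objects, and the proof of Theorem \ref{thm:sineb1} applies the contraction principle to the endpoint map $g\mapsto g(\infty)$, which is not continuous for local-uniform convergence). So ``good rate function'' must mean compact sublevel sets in that topology — equivalently, after the time change $u=1-e^{-\beta t/4}$, compactness in $C[0,1]$ including the endpoint. A diagonal extraction from Arzel\`a--Ascoli on compacts only yields compactness in the local-uniform topology, which is strictly weaker; note also that your $L^2$ bound is really the global identity $\drift^2\ldp(g'/\drift)\ge c_1(g'-\drift)^2$, and this gives $|g(t)-g(s)-\int_s^t\drift|\le\sqrt{r/c_1}\,\sqrt{t-s}$, which is useless for controlling $g(\infty)-g(T)$ uniformly. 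What is missing is a tail modulus $\sup_{g:\Pathsine(g)\le r}\bigl(g(\infty)-g(T)\bigr)\to 0$ as $T\to\infty$; the paper proves exactly this (its bound (\ref{equicont}) with modulus $C(\log\eps^{-1})^{-1/3}$ depending only on $r$). The good news is that your own dyadic estimate already contains the fix: your bounds give, uniformly over the sublevel set,
\begin{align*}
\sum_{k\ge K} S_k \;\le\; \sum_{k\ge K} 2^{-k/2}+C\Bigl(\sum_{k\ge K} c_k\Bigr)^{1/2}\Bigl(\sum_{k\ge K} k^{-2}\Bigr)^{1/2}\;\le\; C(r)\bigl(2^{-K/2}+K^{-1/2}\bigr),
\end{align*}
which is a uniform modulus of continuity at $u=1$. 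You need to state this quantitative version and combine it with the interior modulus before invoking Arzel\`a--Ascoli in $C[0,1]$; as written, the compactness claim for $\Pathsine$ is proved in the wrong topology.
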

\begin{proof}
Fix $T>0$ and $r\ge 0$.  In order to prove that $K_r=\{g:\Pathschr(\cdot)\le r\}$ is compact we first  show the equicontinuity of this set. Suppose that $g\in K_r$. Then $g(0)=0$ and $g'(x)\ge 0$ exists a.e.~in $[0,T]$. 
We have for $0\le x \le y\le T$
\begin{align*}
|g(x)-g(y)-(x-y) |=\left|\int_x^y (g'(s)-1) ds \right|&\le (y-x)^{1/2} \sqrt{\int_x^y (g'(s)-1)^2 ds}\\
&\le c (y-x)^{1/2}  \sqrt{\int_x^y \ldp(g'(s)) ds}\le c  (y-x)^{1/2} r^{1/2}
\end{align*}
where we used (\ref{quadbnd}) in the second step. This shows that $K_r$ is equicontinuous. Using Tonelli's semicontinuity theorem (e.g.~Theorem 3.5, \cite{variational}) the compactness of $K_r$ now follows. 

The proof for $\Pathsine(\cdot)$ is  bit more involved. Fix $\beta>0$. It is convenient to transform the interval $[0,\infty)$ into $[0,1)$ using the function $y=1-e^{-\beta t/4}$. Then for a  $g\in C[0,\infty)$ with $\Pathsine(g)<\infty$ we have
\[
\Pathsine(g)=\int_0^\infty \drift^2(t) \ldp(g'(t) \drift^{-1}(t)) dt=\frac{\beta}{4} \int_0^1(1-y)\ldp(\tilde g'(y)) dy 
\]
where $\tilde g(y)=g(-\tfrac4{\beta} \log (1-y))$, $\tilde g\in C[0,1)$. Consider the functional  $\Psine(\cdot)$ on $C[0,1)$ defined as 
\begin{align}\label{Psine}
\Psine(g)=
\tfrac{\beta}{4}\int_0^1 (1-t) \ldp(g'(t))dt
\end{align}
if  $g'(t)$ exists and non-negative for a.e. $0\le t<1$, and as $\infty$ otherwise. Clearly, if we show that $\Psine(\cdot)$ 
is a good rate function on $C[0,1)$ then the same will hold for $\Pathsine$.  We first show that if $g\in C[0,1)$ and $\Psine(g)<\infty$ then $\lim\limits_{y\to1^-}g(y)$ is finite, i.e.~we can consider $\Psine(\cdot)$ on  $C[0,1]$. We have
\begin{align*}
\lim\limits_{y\to 1^{-}} g(y)=\int_0^1 g'(y) dy\le 2+ \int_0^1 g'(y)\ind(g'(y)\ge 2) dy.
\end{align*}
We will prove that 
\begin{align}
\textup{if }h(y)\ge 0,\,\,\textup{ and } \,\,\int_0^1 (1-y) h(y)^2 \log^2(h(y)+e)dy<\infty,\,\, \textup{ then } \,\,\int_0^1 h(y) dy<\infty.
\label{BBB}
\end{align}
Using this with $h(y)=g'(y)\ind(g'(y)\ge 2)$ together with the  bound in (\ref{ldptail1}) we get the boundedness of $\int_0^1 g'(y) dy$ and the existence on $\lim\limits_{y\to1^{-}} g(y)$. 

Let $\Phi(x)=x^2 \log^2(|x|+e)$, this is a strictly convex,  even function with $\lim\limits_{x\to 0} \frac{\Phi(x)}{x}=0$, $\lim\limits_{x\to \infty} \frac{\Phi(x)}{x}=\infty$ (i.e.~$\Phi$ is a `nice Young function').  Introduce the complementary function
\[
\Psi(x)=\Phi^*(x)=\sup_{y\ge 0} \{ y |x|-\Phi(y)\}=\int_0^{|x|} (\Phi')^{(-1)}(y) dy
\] 
where $(\Phi')^{(-1)}$ is the inverse of the strictly increasing function $\Phi'$ on $[0,\infty)$. Assume that 
\begin{align}\label{AAA}
A=\int_0^1 (1-y) \Phi(h(y)) dy<\infty
\end{align}
and let $\mu$ the measure on $[0,1]$ with $d\mu =\frac{1}{A}(1-x) dx$. Consider the Orlicz spaces
\begin{align*}
L^\Phi_\mu&=\{f: \textup{there is an $a>0$ with } \int_{[0,1]} \Phi(a f) d\mu<\infty \},\\
L^\Psi_\mu&=\{f: \textup{there is an $a>0$ with } \int_{[0,1]} \Psi(a f) d\mu<\infty \}
\end{align*}
with the Luxemburg-norms defined as
\begin{align}\label{norm}
\| f\|_{\Phi}=\inf\{b>0: \int_{[0,1]} \Phi(b^{-1} f) d\mu \le 1\}, \quad \| f\|_{\Psi}=\inf\{b>0: \int_{[0,1]} \Psi(b^{-1} f) d\mu \le 1\}.
\end{align}
(See e.g.~\cite{Orlicz} for more on Orlicz spaces.)
Note, that by our assumption (\ref{AAA}) we have $\| h\|_\Phi \le 1$. By the generalized H\"older inequality for Orlicz spaces (c.f.~Theorem 3 in Chapter III of \cite{Orlicz}), for any $ f\in L^\Psi_\mu$ one has
\begin{align}\label{orlicz}
\| f h\|_1 \le 2 \|f\|_\Psi \|h\|_\Phi\le 2 \|f\|_\Psi
\end{align}
where $\|\cdot \|_1$ is the $L^1$ norm on $[0,1]$ with reference measure $\mu$. 
Choose $f(x)=\frac{1}{1-x}$. If we show that $\|f\|_\Psi<\infty$ then this would imply
\[
\infty>2\|f\|_\Psi\ge  \| f h\|_1= \tfrac{1}{A} \int_0^1  \frac{1}{1-x} h(x) (1-x)dx=\tfrac1{A} \int_0^1 h(x) dx,
\]
and  the statement (\ref{BBB}) would follow. It is not hard to check, that there is a $c>0$ so that 
%Using similar techniques to the proof of Proposition \ref{prop:ldpasympt} in the appendix we can get that 
\begin{align}\label{Psi}
\Psi(x)\le c \frac{x^2}{\log^2(x+e)}, \qquad \textup{for $x\ge 0$.}
\end{align}
Since the integral  $\int_0^1 (1-x) \frac{(1-x)^{-2}}{\log^2((1-x)^{-1}+e)}dx$ is finite, this implies that  $\|\tfrac{1}{1-x}\|_{\Psi}$ is finite and thus $\int_0^1 h(y) dy<\infty$. This completes the proof that if
$\Psine(g)<\infty$ then $\lim\limits_{y\to1^-}g(y)$ is finite, and also shows the last statement of the proposition. 

Next we will prove the equicontinuity of the set $K_r=\{f: \Psine(f)\le r\}$, we will show that if $g\in K_r$ then for   $\eps<\eps_0$  we have
\begin{align}\label{equicont}
|g(a+\eps)-g(a)|\le C (\log \eps^{-1})^{-1/3} \qquad \textup{for any $a\in [0,1-\eps]$}.
\end{align}
Here $\eps_0, C$ only depend on $r$. 

We first assume $a\le 1-\sqrt{\eps}$. Then
\begin{align*}
|g(a+\eps)-g(a)-\eps|&=|\int_a^{a+\eps} ( g'(y)-1) dy|
\\
&\le \left(\int_a^{a+\eps} \frac{1}{1-y}dy\right)^{1/2} \left(\int_a^{a+\eps} (1-y) (g'(y)-1)^2 dy\right)^{1/2} \\
&\le C r^{1/2} \left(\log\left(1+\frac{\eps}{1-a-\eps}\right)\right)^{1/2}\le C r^{1/2} \eps^{1/4}
\end{align*}
Where we used $1-a>\sqrt{\eps}$, the bound (\ref{quadbnd}) and the fact that $\eps$ can be chosen to be small enough. 

Next we assume that $a>1-\sqrt{\eps}$. Because of the monotonicity of $g$ it is enough to bound $|g(1)-g(1-\sqrt{\eps})|$. Setting  $f(x)=\tfrac{1}{1-x}$ and $h(x)=g'(x)\ind(g'(x)\ge 2)$ we  have 
\begin{align}\label{ppp}
g(1)-g(1-\sqrt{\eps})\le 2\sqrt{\eps}+\int_{1-\sqrt{\eps}}^1 h(x) dx. 
\end{align}
Since $\Psine(g)\le r$, we can assume that (\ref{AAA}) holds with some finite $A>0$. We will now follow the previous argument using  Orlicz spaces. We use the same definitions for $\Psi, \Phi$, $\mu$ but for the norms $\|\cdot \|_\Psi, \|\cdot \|_\Phi$ defined in (\ref{norm}) we use the interval  $[1-\sqrt{\eps}, 1]$ instead of $[0,1]$.

Using inequality (\ref{ppp}) and  (\ref{orlicz})  we get the bound 
\[
g(1)-g(1-\sqrt{\eps})\le 2\sqrt{\eps}+A\int_{1-\sqrt{\eps}}^1 f(x) h(x) d\mu(x)\le 2\sqrt{\eps}+  A\|f\|_\Psi.
\]
To estimate $ \|f\|_\Psi$ we will prove that with  $b=(\log \eps^{-1})^{-1/3}$ there is a constant $\eps_0$ depending on $A$ so that 
\[
\int_{1-\sqrt{\eps}}^1 \Psi(b^{-1} f(x)) d\mu(x)=A^{-1}\int_{1-\sqrt{\eps}}^1(1-x) \Psi(b^{-1} (1-x)^{-1}) dx<1, \qquad \textup{for $\eps<\eps_0$}.
\]
This will imply that for such $\eps$ we have $\|f\|_\Psi\le b$.   Using (\ref{Psi}) we get
\begin{align*}
A^{-1} \int_{1-\sqrt{\eps}}^1(1-x) \Psi(b^{-1} (1-x)^{-1}) dx&\le c b^{-2}A^{-1} \int_0^{\sqrt{\eps}} x^{-1} \frac{1}{\log^2(2x^{-1} b^{-1})}dx\\
&\le \frac{c (\log \eps^{-1})^{2/3}}{A \log\left(2 (\log \eps^{-1})^{-1/3} \eps^{-1/2}\right)}. 
\end{align*}
Since the right hand side converges to 0 as $\eps\to 0$ we get that  $\|f\|_\Psi\le b$ for small enough $\eps$  which in turn  leads to the upper bound 
(\ref{equicont}). This completes the proof of the equicontinuity of the set $K_r$ and the compactness follows again by Tonelli's theorem.
\end{proof}

%%%%%%%%%%%%%%%%%%%%%%%%%%%%%%%%%%%%%%%%%%%%%%%%%%%%%%%%%%%%%%%%%%%%%%%%%%%%%%%%%%%%%%%%%%%%%%%%%%%%%%%%%%%%%%%%%%%%%%%%%%%%%%%%%

\section{From the path to the endpoint}\label{s:endpoint}

In this section we will complete the proofs of Theorems \ref{thm:sineb1} and \ref{thm:schr1}.

\begin{proof}[Proof of Theorem \ref{thm:schr1}]

%
%From Theorem \ref{thm:pathschr} and  the Contraction Principle (see e.g.~\cite{DemboZeitouni}) for any fixed $T$ the sequence of random variables satisfy a large deviation principle with scale $\lambda^2$ and rate function 
%\begin{align}\label{contrprinc}
%I_\tau(y)=\inf \left\{\int_0^T \ldp(g'(t))dt: g\in C[0,T], \, g(T)=y   \right\}.
%\end{align}
%The variational problem in (\ref{contrprinc}) can be solved using the usual perturbation method: 
%
%\benedek{Fill this in!}
%
%The final step is to note that if $\phi_\lambda$ solves (\ref{schrSDE1}) then $\phi_\lambda(t)-\phi_0(t)$ solves the SDE \diane{XXX} and Proposition \ref{prop:SDEprop} shows that the number of elements in $ \{\nu: \phi_\nu(T)\in 2\pi \Z, \nu\in [0,\lambda]   \}$ and $\lfloor \frac{\phi_\lambda(T)-\phi_0(T)}{2\pi} \rfloor$ differ only by at most one. This shows that the large deviation principle for $(2\pi \lambda)^{-1} \alpha_{\lambda/\tau}(\tau)$ is the same as for the normalized counting functions $\lambda^{-1} \wt N_\tau(\lambda)$ and the theorem follows. \benedek{Check various constants!}

Consider the continuous map $F:C[0,T]\to \R$ given by $F(g)=g(T)/(2\pi)$.  By the contraction principle (see e.g.  \cite{DemboZeitouni}) the random variables $\tfrac1{\lambda}\tfrac{\alpha_\lambda(T)}{2\pi}$  satisfy a large deviation principle with scale function $\lambda^2$ and good rate function $J$ defined as
\begin{equation}
\label{neardecreasingg}
J(\rho) = \min\left\{ \int_0^T \ldp(g'(t))dt: \, g'(t)\ge 0, \,g(T)=2\pi \rho\right\}. 
\end{equation}
We will now solve this variational problem. If $g$ provides the minimum then we can assume that $g'$ is monotone decreasing.  To see this define $\tilde g$ with $\tilde g(0)=0$ and  $\tilde g'(t) = \sup\{x : m(g'(s)\geq x)\geq t\}$ where $m$ indicates Lesbegue measure. Then $\tilde g(T)=g(T)$, $\Pathschr (\tilde g)= \Pathschr (g)$, and $\tilde g'(t)$ is decreasing.  If $g'>0$ on $[0,a]$ and $g'=0$ on $(a,1]$ then by the classical variational method we get that $\ldp'(g'(x))$ is constant  on $[0,a]$. This means that $g'(x)=\frac{2\pi\rho}{a}$ on $[0,a]$ and $g'(x)=0$ on $(a,T]$ and our variational problem is reduced to finding the minimum of 
\begin{align*}
f(a)=a \ldp\left(\tfrac{2\pi \rho}{a}\right)+(T-a) \ldp(0), \quad \textup{on } \quad 0\le a\le T.
\end{align*}
But we have 
\[
f'(a)=\ldp\left(\tfrac{2\pi\rho}{a}\right)-\ldp(0)-\ldp'\left(\tfrac{2\pi\rho}{a}\right)\tfrac{2\pi \rho}{a}<0,
\]
since $\ldp$ is strictly convex,  which means that the minimum is at $a=T$. Thus
\begin{align}\label{ldprate11}
J(\rho)=\min\left\{ \int_0^T \ldp(g'(t))dt, \, g'(t)\ge 0, \,g(T)=2\pi \rho\right\}=T \ldp(2\pi \rho/T) 
\end{align}
is the large deviation rate function for $\frac{1}{\lambda}\frac{\alpha_\lambda(T)}{2\pi}$.  

Now recall that the counting function of $\sch_\tau$ is given by
\[
\wt N_\tau(\lambda)=\# \{ \nu: 0\le \nu \le \lambda, \phi_{\lambda/\tau}(\tau)\in 2\pi \Z\}
\]
 where $\phi_\lambda$ is the solution of (\ref{schrSDE1}). Note, that $\phi_\lambda(t)-\phi_0(t)$ has the same distribution as $\xi_{f,0}(t)$ with constant $f=\lambda$, which in turn has the same distribution as $\wt \alpha_\lambda(t)$. Using the coupling methods of Proposition \ref{prop:SDEprop} we can show that $\phi_\lambda(t)$ is increasing in $\lambda$ for any fixed $t$ (see \cite{KVV} for a detailed proof of this fact). From this it follows that
\[
\left|\tilde N_\tau(\lambda)-\tfrac{1}{2\pi}\left(\phi_{\lambda/\tau}(\tau)-\phi_0(\tau)\right)\right|\le 1.
\]
This means that in order to get a large deviation principle for $\tfrac1{\lambda}\tilde N_\tau(\lambda)$ it is enough to prove one for $\tfrac1{\lambda} \tfrac{\phi_{\lambda/\tau}(\tau)-\phi_0(\tau)}{2\pi}$. But this has the same distribution as $\frac{1}{\lambda}\frac{\wt \alpha_{\lambda/\tau}(\tau)}{2\pi}$, and a simple rescaling of (\ref{ldprate11}) completes the proof of the theorem.
\end{proof}

\begin{proof}[Proof of Theorem \ref{thm:sineb1}]
Theorem \ref{thm:pathsineb} shows that $\tfrac1{\lambda} \alpha_{\lambda(\cdot)}$ satisfies a path level large deviation principle. By applying the time change $y=1- e^{-\tfrac{\beta}{4}t}$, we get that $t\to \tfrac1{\lambda} \alpha_{\lambda}(1- e^{-\tfrac{\beta}{4}t})$ satisfies a path level LDP on $C[0,1)$ with the modified rate function $\Psine$ given in (\ref{Psine}). In Proposition \ref{prop:goodrate} we showed  that if $\Psine (g)<\infty$ then the limit as $t\to 1^{-}$ exists and so the LDP actually holds on $C[0,1]$. Using the contraction principle with the functional $F(g)= \tfrac{1}{2\pi} g(1)$, we get that $\tfrac{1}{\lambda}\tfrac{\alpha_\lambda(\infty)}{2\pi}$ satisfies a large deviation principle with speed function $\lambda^2$ and a good rate function 
\begin{align*}
J^\beta(\rho)& = \min\left\{\Psine(g): \, g(1)=2\pi \rho\right\}\\&= \min\left\{ \tfrac{\beta}{4}\int_0^1 (1-t) \ldp(g'(t))dt: \, g(0)=0, g'(t)\ge 0, g(1)=2\pi \rho\right\}. 
\end{align*}
 The counting function $N_\beta(\lambda)$ of $\Sineb$ is given by $\tfrac{\alpha_\lambda(\infty)}{2\pi}$, so Theorem \ref{thm:sineb1} will follow if we can show that the solution of this variational problem is given by $\beta \sineldp(\rho)$ as defined in the theorem.

The function $\Psine$ is a good rate function, so for any $\rho\ge 0$ the minimum is achieved at some $g_\rho\in C[0,1]$.
Clearly, when $\rho=\tfrac{1}{2\pi}$ then the minimum is zero, as the $g(t)=t$ function shows. (We will not denote the dependence of $\rho$ in $g=g_\rho$ from this point.)

We may assume that for the minimizer the derivative $g'$ will not take values from both $(1,\infty)$ and $[0,1)$ because otherwise we could construct a function $\hat g$ with the same boundary condition $\hat g(1)=2\pi \rho$, but with $\Psine(g) >\Psine (\hat g)$.  The construction is as follows.  Assume $\rho <1/(2\pi)$ and that  $A= \{t: g'(t) >1\}$ has positive measure. Since $\int_0^1 (g'(t)-1) dt=2\pi \rho-1<0$ and $\int_A (g'(t)-1) dt>0$, by the intermediate value theorem we can find $B\subset [0,1]\setminus A$ so that $\int_{A\cup B} (g'(t)-1)dt=0$. Define $\hat g$ with $\hat g(0)=0$, 
$\hat g'(t)= g'(t)$ if $t\notin A\cup B$ and $\hat g'(t) =1$ otherwise. Then $g(1)=\int_0^1 g'(t) dt=\int_0^1 \hat g'(t) dt=\hat g(1)$, but clearly  $\Psine(g) > \Psine (\hat g)$. 
  A similar construction works for $\rho > 1/(2\pi)$.  Thus we may assume that $g'(t)\le 1$ for all $t$  if $\rho< 1/(2\pi)$, and $g'(t)\ge 1$ for all $t$ if $\rho>1/(2\pi)$.

First assume that $\rho>\tfrac{1}{2\pi}$. Then $g'(t)\ge 1$ for all $t$ and we can use the classical variational method (see e.g.~\cite{variational}) to conclude that $(1-t)\ldp'(g'(t))$ is constant in $t$. Thus the optimizer is given by a function $g_\rho$ which satisfies
\begin{equation}
\label{grho1}
g_\rho(0)=0, \qquad \ldp'(g'(t))=\tfrac{c_\rho}{1-t}, \qquad \int_0^1 (\ldp')^{(-1)}\left(\tfrac{c_\rho}{1-t}\right) dt=2\pi \rho,
\end{equation}
for some constant $c_\rho$ and the solution of the variational problem is 
\begin{align}\label{Jb1}
J^\beta(\rho)=\tfrac{\beta}{4}\int_0^1 (1-t) \ldp\left((\ldp')^{(-1)}\left(\tfrac{c_\rho}{1-t}\right)\right) dt. 
\end{align}
In Proposition \ref{prop:simplifiedRF} below we will show that this is equal to $\beta I_{Sine}(\rho)$ as defined in Theorem \ref{thm:sineb1}.

Now assume that $\rho<\tfrac{1}{2\pi}$, here we can assume that the minimizer satisfies $g'(t)\le 1$. As in the case of $\sch_\tau$ we may assume $g'$ is decreasing, this can be shown using the same construction as found in the paragraph directly following equation (\ref{neardecreasingg}).  Suppose that $g'$  is zero for $t\in [a,1]$ and $g'(t)>0$ in $[0,a]$. Then on $[0,a]$ the classical variational method shows that $(1-t)\ldp'(g'(t))$ must be constant. Thus the optimizer must be of the following form:
\begin{equation}
\label{grho2}
g'(t)=\begin{cases}
(\ldp')^{(-1)}\left(\tfrac{c_{\rho,a}}{1-t}\right), &\qquad 0\le t \le a\\
0, &\qquad a<t\le 1,
\end{cases}
\end{equation}
for some constant $c_\rho$ which satisfies
\begin{align}\label{defc}
2\pi \rho =\int_0^a (\ldp')^{(-1)}\left(\tfrac{c_{\rho,a}}{1-t}\right) dt.
\end{align}
By Propositions \ref{prop:ldpconvex} and \ref{prop:ldpasympt} of the Appendix the function $\ldp'(x)$ is strictly increasing on $(0,\infty)$ with a limit of $-\tfrac{1}{2\pi}$ at $x=0$. Thus $c_{\rho,a}$ in (\ref{grho2}) cannot be smaller than $-\frac{1-a}{2\pi}$. Our next claim is that the optimizer has a continuous derivative at $t=a$, which will identify $c_{\rho,a}$ as $c=-\tfrac{1-a}{2\pi}$. 
Assume the opposite, i.e.~that $c_{\rho,a}>-\tfrac{1-a}{2\pi}$ and $g'(a)>0$. Let $\eta_{\delta}(x)=\ind_{(a,a+\delta)}-\ind_{(a-\delta,a)}$. If $\delta, \eps$ are small enough then $g'-\eps \eta_{\delta}\ge 0$ in $[0,1]$ and $\tilde g(t)=\int_0^t (g'(s)-\eps \eta_{\delta}(s))ds$ satisfies the same boundary conditions as $g$. Since $g$ is a minimizer, the derivative of  $h(\eps)= \Psine(g+\eps \eta_\delta)$ at $\eps=0$ cannot be negative. We can compute the derivative as
\begin{align*}
h'(0)=\frac{\beta}{4} \int_0^1 (1-t)\ldp'(g(t)) \eta_\delta(t)dt=-\int_{a-\delta}^a (1-t)\tfrac{c_{\rho,a}}{1-t}dt+\int_a^{a+\delta} (1-t) \left(-\tfrac1{2\pi}\right)dt.
\end{align*}
This is equal to $\delta(-c_{\rho,a}-\tfrac{1-a}{2\pi})+\tfrac{\delta^2}{4\pi}$ which is negative if $\delta$ is small enough (by our assumption that $c_{\rho,a}>-\tfrac{1-a}{2\pi}$). The contradiction shows that we must have  $c=-\tfrac{1-a}{2\pi}$. 

Thus the optimizer is given by 
\begin{equation}
\label{grho3}
g'(t)=\begin{cases}
(\ldp')^{(-1)}\left(\tfrac{a-1}{2\pi(1-t)}\right), &\qquad 0\le t \le a\\
0, &\qquad a<t\le 1.
\end{cases}
\end{equation}
for some $0\le a\le 1$ with
\begin{align}\label{defc1}
2\pi \rho =\int_0^a (\ldp')^{(-1)}\left(\tfrac{a-1}{2\pi(1-t)}\right) dt.
\end{align}
and the solution of the variational problem in the $2\pi \rho<1$ case is given by 
\begin{align}\label{Jb2}
J^\beta(\rho)=\frac{\beta}{4} \int_0^a (1-t)  \ldp\left((\ldp')^{(-1)}\left(\tfrac{a-1}{2\pi(1-t)}\right)\right) dt+\tfrac{\beta}{64}(1-a)^2.
\end{align}
In Proposition \ref{prop:simplifiedRF} below we will show that this is equal to $\beta \sineldp(\rho)$. 
%
%
%This reduces the variational problem to finding the minimum of 
%\[
%f_{\rho}(a)=\tfrac{\beta}{4}\int_0^a (1-t) \ldp\left((\ldp')^{(-1)}\left(\tfrac{c_{\rho,a}}{1-t}\right)\right) dt+\tfrac{\beta}{8}(1-a)^2 \ldp(0) , \qquad \textup{on } 0\le a\le 1. 
%\]
%Note, that $\ldp'(c)\ge -\tfrac{1}{2\pi}$ and since $g'(t)\le 1$ we must have $c_{\rho,a}\le 0$ and $\tfrac{c_{\rho,a}}{(1-t)}\ge -\tfrac{1}{2\pi}$.  
\end{proof}

\begin{proposition}\label{prop:simplifiedRF}
The rate function for the $\Sineb$ process is given by 
\begin{align}
\beta \sineldp (\rho)=\frac{\beta}{8} \left[ \frac{\nu}{8}+ \rho \II(\nu)\right]\notag
\end{align}
where $\nu=\gamma^{-1}(\rho)$, and $\gamma$ is the strictly increasing function given in (\ref{rhonu}).
%\begin{align}
%\label{f}
%\gamma(\nu)= \frac{\II(\nu)}{8} \int_{-\infty}^\nu \II^{-2}(x)dx \hspace{.3cm} \text{ if } \nu<0, \quad \text{and} \quad  \gamma(\nu)= \frac{\II(\nu)}{8} \int_{1}^\nu \II^{-2}(x)dx \hspace{.3cm} \text{ if } 0\le \nu<1.\end{align}
\end{proposition}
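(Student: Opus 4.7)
The plan is to verify the closed form by showing that both sides of the claimed identity satisfy the same first-order ODE in $\rho$ with the same boundary data. The approach works uniformly across both cases $\rho > 1/(2\pi)$ (where $\nu = \gamma^{-1}(\rho) < 0$) and $\rho \in (0, 1/(2\pi))$ (where $\nu \in (0,1)$). The key computational input is the identity
$$\ldp'(q) = \frac{\II(a(q))}{2\pi}, \qquad a(q) = K^{-1}(\pi/(2q)),$$
which follows by the chain rule from $\ldp(q) = (2-a)/8 - E(a)/(4K(a))$ and the standard relations $K'(a) = -\II(a)/(2a(1-a))$, $E'(a) = (E(a)-K(a))/(2a)$, and $\II'(a) = -K(a)/2$. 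The intermediate computation uses the algebraic simplification $(2-a)K - 2E = aK + 2\II$ to collapse a messy expression down to $\II(a)/(2\pi)$.

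Next I would use the Euler--Lagrange condition $(1-t)\ldp'(g_\rho'(t)) = c_\rho$ (constant in $t$) from the excerpt and evaluate at $t=0$: setting $\nu := a(g_\rho'(0))$, the identity above gives $c_\rho = \II(\nu)/(2\pi)$. The sign of $\nu$ is dictated by the minimizer: $g_\rho'(0) \ge 1$ for $\rho > 1/(2\pi)$ forces $\nu \le 0$, and $g_\rho'(0) \le 1$ for $\rho < 1/(2\pi)$ forces $\nu \in (0,1)$ --- matching the two branches of $\gamma$ in (\ref{rhonu}). Applying the envelope theorem to $J^\beta(\rho)$ --- absorbing the boundary term at $t=1$ in the first case, and using the free stationarity in $a^*$ together with $g'(a^*)=0$ in the second case, both of which cause the constant $\ldp(0)$-tail to drop out of the differential --- yields
$$\frac{dJ^\beta}{d\rho} = \frac{\beta\pi c_\rho}{2} = \frac{\beta \II(\nu)}{4}.$$

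For the right-hand side, set $F(\nu) = (\beta/8)[\nu/8 + \gamma(\nu)\II(\nu)]$ and $\rho = \gamma(\nu)$. Differentiating the integral formula (\ref{rhonu}) for $\gamma$ (the computation is identical in form on both branches $\nu < 0$ and $0 < \nu < 1$) yields
$$\gamma'(\nu) = \frac{\II'(\nu)}{\II(\nu)}\gamma(\nu) + \frac{1}{8\II(\nu)} = \frac{1 - 4 K(\nu)\gamma(\nu)}{8\II(\nu)}.$$
A short calculation using $\II'(\nu) = -K(\nu)/2$ then gives
$$\frac{dF}{d\nu} = \frac{\beta}{8}\left[\frac{1}{8} + \gamma'(\nu) \II(\nu) - \frac{K(\nu) \gamma(\nu)}{2}\right],$$
and dividing by $\gamma'(\nu)$ and substituting the formula above for $\gamma'$ collapses $dF/d\rho$ to $\beta \II(\nu)/4$, matching $dJ^\beta/d\rho$ exactly.

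Finally I would match one boundary value on each branch. At $\rho = 1/(2\pi)$, i.e.~$\nu = 0$, both $F$ and $J^\beta$ vanish: $\II(0) = 0$ gives $F(0) = 0$, and $g(t) = t$ is the zero-cost minimizer for $J^\beta$. At $\rho \to 0^+$, i.e.~$\nu \to 1^-$, one has $\II(1) = -E(1) = -1$ and $\gamma(1) = 0$, giving $F(1) = \beta/64$, in agreement with $J^\beta(0) = \beta/64$ read off from (\ref{Jb2}) at $a = 0$ (where $g \equiv 0$ and only the constant tail contributes). Integrating the matched ODEs from these starting points then yields the proposition. The main obstacle is the algebraic derivation of $\ldp'(q) = \II(a)/(2\pi)$ in the first step --- once that identity is in hand, everything else is a clean ODE comparison, and the choice of branch of $\gamma^{-1}$ is selected automatically by the sign of $g_\rho'(0) - 1$.
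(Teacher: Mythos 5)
Your key identity $\ldp'(q)=\II(a(q))/(2\pi)$ is correct (it is equivalent to the paper's (\ref{ldp'})), the envelope-theorem computation $\frac{d}{d\rho}J^\beta(\rho)=\frac{\beta}{4}\II(\nu)$ with $\II(\nu)=2\pi c_\rho$ is sound, and your computation of $\frac{d}{d\rho}\big(\frac{\beta}{8}[\gamma^{-1}(\rho)/8+\rho\,\II(\gamma^{-1}(\rho))]\big)=\frac{\beta}{4}\II(\gamma^{-1}(\rho))$ from (\ref{rhonu}) is exactly the paper's (\ref{stuff}) and (\ref{www}). But there is a genuine gap at the point where you declare that the two derivatives ``match exactly'': on the left you have $\frac{\beta}{4}\II(\nu_J(\rho))$ with $\nu_J(\rho)$ \emph{defined through the minimizer} by $\II(\nu_J)=2\pi c_\rho=2\pi\ldp'(g_\rho'(0))$, while on the right you have $\frac{\beta}{4}\II(\gamma^{-1}(\rho))$ with $\gamma$ the \emph{explicit} function of (\ref{rhonu}). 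These are two a priori unrelated functions of $\rho$; since $\II$ is injective on $(-\infty,1]$, asserting equality of the two derivatives is literally the assertion $\nu_J(\rho)=\gamma^{-1}(\rho)$, which is the heart of the proposition and is nowhere established in your outline. Matching the boundary values at $\rho=0$ and $\rho=\tfrac1{2\pi}$ does not rescue this: two functions agreeing at the endpoints of an interval need not have equal derivatives inside it.

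The missing step is to show that the constraint equations (\ref{grho1}), resp.\ (\ref{defc1}), namely $2\pi\rho=\int(\ldp')^{(-1)}\big(\tfrac{c_\rho}{1-t}\big)\,dt$ over the appropriate range of $t$, are equivalent to $\rho=\gamma(\nu)$ with $\nu$ determined by $\II(\nu)=2\pi c_\rho$. The paper does this by the substitution $\tfrac{\pi}{2K(x)}=(\ldp')^{(-1)}\big(\tfrac{c_\rho}{1-t}\big)$, equivalently $1-t=2\pi c_\rho/\II(x)$ by your own identity, which carries the constraint integral into $\tfrac{\pi\II(\nu)}{4}\int\II(x)^{-2}dx$ with the limits of integration of (\ref{rhonu}) on each branch, i.e.\ into $2\pi\gamma(\nu)$. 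Once you add this computation your ODE comparison closes. Note that the paper itself sidesteps the ODE and boundary-value bookkeeping entirely by exhibiting an exact antiderivative, identity (\ref{exact}), for $(1-t)\ldp(g'(t))$ along the optimizer, which evaluates $J^\beta(\rho)$ in closed form directly; your route is workable but is not shorter once the missing identification is supplied.
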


\begin{proof}
We have to show that $J^\beta(\rho)$ defined by (\ref{grho1}) and (\ref{Jb1}) for $\rho>1/(2\pi)$ and by (\ref{defc1}) and (\ref{Jb2}) for $\rho<1/(2\pi)$ is equal to $\beta I_{Sine}$ given above.

We begin with the case where $\rho>\tfrac{1}{2\pi}$.  In this case the minimizer $g=g_\rho$ is given by (\ref{grho1}). One easily checks that 
\begin{align}\label{exact}
\frac{d}{dt}\left(\tfrac{\beta}{8}\left(-(1-t)^2 \ldp(g'(t))+c_\rho (1-t)g'(t)+c_\rho g(t)\right)\right)=\tfrac{\beta}{4}(1-t)\ldp(g'(t)).
\end{align}
From this we get
\[
J^\beta(\rho)=\frac{\beta}{4} \int_0^1 (1-t)  \ldp \left( g'(t))  \right)dt= \tfrac{\beta}{8} \left[ \ldp(g'(0))- c_\rho g'(0)+2 \pi \rho c_\rho\right]
\]
where we used $g(0)=0$, $g(1)=2\pi \rho$, and the limits
\[
\lim\limits_{t\to 1^-} (1-t)^2 \ldp(g'(t))=\lim\limits_{x\to \infty} \frac{c_\rho^2 \ldp(x)}{\ldp'(x)^2}=0, \qquad \lim\limits_{t\to 1^{-}} (1-t) g'(t)=\lim\limits_{x\to \infty} \frac{x}{c_\rho \ldp'(x)}=0 
\]
which follow from the asymptotics (\ref{limits}) and (\ref{lim_ldp'}) to be proven in Proposition \ref{prop:ldpasympt}.

Now for the case where $\rho<\tfrac{1}{2\pi}$ we have that $g_\rho$ is given by (\ref{grho3}).  Using the notation $c=c_\rho=\frac{a-1}{2\pi}$, the identity (\ref{exact}) gives 
\[
J^\beta(\rho)=\frac{\beta}{4} \int_0^{2\pi c_\rho+1} (1-t)  \ldp \left( g'(t))  \right)dt+ \tfrac{\beta}{8}(2\pi c_\rho)^2\ldp(0)=  \tfrac{\beta}{8} \left[ \ldp(g'(0))- c_\rho g'(0)+2 \pi \rho c_\rho\right],
\]
where we used $g(0)=0$, $g(a)=2\pi \rho$, and $g'(a)=0$.
Note, that $c_\rho>0$ if $\rho>1/(2\pi)$ and $-\tfrac{1}{2\pi}\le c_\rho<0$ if $\rho<1/(2\pi)$. 
Introducing  $\nu=K^{(-1)}\left(\frac{\pi}{2 (\ldp')^{(-1)}(c_\rho)}\right)$, we get for both $\rho<1/(2\pi)$ and $\rho>1/(2\pi)$ that
\[
J^\beta(\rho)=\frac{\beta}{8}\left(\frac{\nu}{8}+\rho \II(\nu)\right)
\]
which agrees with (\ref{defsineldp}), we just have to show that $\nu=\gamma^{(-1)}(\rho)$. Note, that $\nu=\nu(\rho)<0$ if $2\pi \rho>1$ and $0<\nu<1$ if $2\pi \rho<1$.

Recall from (\ref{grho1}) and (\ref{defc1}) that 
\[
\rho=  \tfrac{1}{2\pi}\int_{0}^1 (\ldp')^{(-1)}\left( \tfrac{c_\rho}{1-t}\right) dt, \hspace{.3cm} \text{ if } \rho>\frac{1}{2\pi}, \hspace{.5cm} \text{ and } \hspace{.5cm} \rho= \tfrac{1}{2\pi} \int_{0}^{2\pi c+1} (\ldp')^{(-1)}\left( \tfrac{c_\rho}{1-t}\right) dt, \hspace{.3cm} \text{ if } \rho< \frac{1}{2\pi}.
\]
Applying the change of variables to both integrals with a new variable $x$ satisfying $\frac{\pi}{2 K(x)}=(\ldp')^{(-1)}(\tfrac{c_\rho}{1-t})$, we get that $\rho$ depends on $\nu=K^{(-1)}\left(\frac{\pi}{2 (\ldp')^{(-1)}(c_\rho)}\right)$ exactly via (\ref{rhonu}) which finishes the proof. Note, that the finiteness of the integrals in (\ref{rhonu}) follow from the asymptotics of $K(x)$ and $E(x)$ near 1 and $-\infty$ (see the proof of Proposition \ref{prop:ldpasympt}). 
\end{proof}

%\benedek{Computation (to be deleted):}
%\begin{align*}
%\frac{\pi}{2 K(x)}=(\ldp')^{(-1)}(\tfrac{c_\rho}{1-t}), &\leadsto \frac{c}{1-t}=\ldp'(\frac{\pi}{2K(x)})=\frac{\II(x)}{2\pi}\\
%\frac{c}{(1-t)^2}dt=\frac{\II'(x)}{2\pi}dx=-\frac{K(x)}{4\pi}dx, &\leadsto dt=-\frac{(1-t)^2}{c^2}c \frac{K(x)}{4\pi}=-c \II(x)^{-2} (2\pi)^2 \frac{K(x)}{4\pi}\\
%\rho=  \tfrac{1}{2\pi}\int_{0}^1 (\ldp')^{(-1)}\left( \tfrac{c_\rho}{1-t}\right) dt&=\tfrac{1}{2\pi} \int_{-\infty}^\nu c \II(x)^{-2} (2\pi)^2\frac{K(x)}{4\pi}\frac{\pi}{2 K(x)}dx\\
%&=\tfrac{1}{2\pi} \int_{-\infty}^\nu \II(\nu) (2\pi)^{-1} \II(x)^{-2} (2\pi)^2\frac{K(x)}{4\pi}\frac{\pi}{2 K(x)}dx\\
%&= \tfrac18 \int_{-\infty}^\nu \II(\nu) \II(x)^{-2} dx
%\end{align*}

%%%%%%%%%%%%%%%%%%%%%%%%%%%%%%%%%%%%%%%%%%%%%%%%%%%%%%%%%%%%%%%%%%%%%%%%%%%%%%%%%%%%%%%%%%%%%
\section*{Appendix A: All about $\ldp$}\label{s:app}

%
%\benedek{To do list}
%
%\begin{itemize}
%\item $\Pathsine$ and $\Pathschr$ are good rate function
%
%\item Organize the various bounds and asymptotics for all the functions
%
%\item Simpify $\Pathsine$?
%
%\end{itemize}

%\subsection{Convexity and Continuity}

In this section we prove the needed estimates about the function $\ldp$.

\begin{proposition}\label{prop:ldpconvex}
The function $\ldp(x)$ is strictly convex and continuous on $(0,\infty)$. It has an absolute minimum at $x=1$ where it is equal to 0. 
\end{proposition}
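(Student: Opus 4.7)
The plan is to work in the parameterization $a \in (-\infty, 1)$ defined by $K(a) = \pi/(2q)$, i.e.\ $q = \pi/(2K(a))$. Since $K$ is smooth and strictly increasing on $(-\infty,1)$ with $K(a) \to 0$ as $a \to -\infty$ and $K(a) \to \infty$ as $a \to 1^-$ (the latter because of the well-known logarithmic blow-up), this is a smooth strictly decreasing bijection onto $(0,\infty)$, so the continuity of $\ldp$ on $(0,\infty)$ is immediate from the definition.

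The heart of the proof is the identity $\ldp'(q) = \II(a(q))/(2\pi)$. To derive it, I would differentiate $\ldp$ and $q$ with respect to $a$ using the classical formulas
\[ K'(a) = \frac{E(a) - (1-a)K(a)}{2a(1-a)} = -\frac{\II(a)}{2a(1-a)}, \qquad E'(a) = \frac{E(a) - K(a)}{2a}. \]
A routine manipulation of $E'K - EK'$, in which one substitutes $(1-a)K = \II + E$ to get everything in terms of $\II$, $E$ and $K$, telescopes to
\[ \frac{d\ldp}{da} = \frac{\II(a)^2}{8\,a(1-a)K(a)^2}, \qquad \frac{dq}{da} = \frac{\pi\,\II(a)}{4\,a(1-a)K(a)^2}, \]
and dividing yields $\ldp'(q) = \II(a)/(2\pi)$.

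Next I would analyze $\II$. Differentiating $\II(a) = (1-a)K(a) - E(a)$ and using the formulas for $K'$ and $E'$ gives the clean identity $\II'(a) = -K(a)/2 < 0$, so $\II$ is strictly decreasing on $(-\infty,1)$; since $\II(0) = K(0) - E(0) = 0$, we get $\II > 0$ on $(-\infty,0)$ and $\II < 0$ on $(0,1)$. Combined with the fact that $a(q)$ is strictly decreasing in $q$, this shows that $\ldp'(q) = \II(a(q))/(2\pi)$ is strictly increasing in $q$, with $\ldp'(1) = \II(0)/(2\pi) = 0$. Hence $\ldp$ is strictly convex on $(0,\infty)$ with a unique global minimum at $q = 1$, and substituting $a = 0$ directly into the definition gives $\ldp(1) = \tfrac{1}{4} - \tfrac{\pi/2}{4\cdot \pi/2} = 0$.

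The main obstacle is the algebraic simplification of $E'K - EK'$; once one sees to use $(1-a)K = \II + E$ to rewrite the cross terms, the expression collapses to $-K^2/2 - \II^2/(2a(1-a))$, the $-K^2/2$ piece cancels the $-1/8$ coming from $-\partial_a((2-a)/8)$, and the identity $\ldp'(q) = \II(a)/(2\pi)$ falls out. After that, the sign analysis of $\II$ via $\II' = -K/2$ makes strict convexity and the location of the minimum essentially automatic.
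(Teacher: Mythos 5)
Your argument is correct and follows essentially the same route as the paper: both differentiate the explicit formula for $\ldp$ using the classical identities for $K'$ and $E'$, and both locate the minimum via $K(0)=E(0)=\pi/2$. The only organizational difference is that the paper computes $\ldp''(x)=\frac{\pi}{16x^3}K'\big(K^{-1}(\pi/(2x))\big)^{-1}$ directly and concludes from $K'>0$, whereas you stop at the first-derivative identity $\ldp'(q)=\II(a(q))/(2\pi)$ and get strict monotonicity of $\ldp'$ from $\II'=-K/2<0$ together with the monotonicity of $a(q)$; the two are equivalent, since differentiating your identity in $q$ reproduces the paper's formula for $\ldp''$.
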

\begin{proof}
$K$ is strictly increasing on $(-\infty,1)$ which shows that $\ldp(x)$ is well-defined (and differentiable) on $(0,\infty)$. 
By differentiating (\ref{ldpdef}) and using the identities
\[
K'(x)=\frac{E(x)-(1-x)K(x)}{2(1-x)x}, \qquad E'(x)=\frac{E(x)-K(x)}{2x}
\] 
 we can compute that
\begin{equation}
\label{ldp'}
\ldp'(x)=\frac{1}{x}\ldp (x)- \frac{1}{8 x}K^{-1}\left( \frac{\pi}{2x}\right),
\end{equation}
and
\begin{equation}
\label{I''}
\ldp''(x)=\frac{\pi}{16  x^3} \frac{1}{K'(K^{-1}(\frac{\pi}{2x}))}.
\end{equation}
Observe that $K'(y)> 0$ for $y<1$ which gives $\ldp''(x)> 0$ for $x>0$ and the strict convexity of  $\ldp$. 

Using $K(0)=E(0)=\tfrac{\pi}{2}$ we get $\ldp(1)=\ldp'(1)=0$ which (by the strict convexity) proves the second half of the proposition.
% In order to show that $\ldp$ is continuous on $(0,\infty)$ we need that it is composed of continuous functions.  First observe that $K$ is continuous and strictly increasing on $(-\infty,-1)$ with a range of $(0,\infty)$.  It follows that $K^{-1}$ is continuous on $(0,\infty)$ with range $(-\infty,-1)$.  This together with the continuity of $E$ and $1/x$ give us that $\ldp (x)$ is continuous on $(0,\infty)$.  The rate functions for the path deviations inherit convexity and continuity from $I$.
\end{proof}

\begin{proposition}\label{prop:ldpasympt}
We have  $\lim\limits_{x\to0^{+}}\ldp(x)=\tfrac18$ and $ \lim\limits_{x\to0^{+}}\ldp'(x)=-\tfrac1{2\pi}$. There is a constant $c_1>0$ so that \begin{align}\label{ldp_quad}
\ldp(x)&\ge c_1 (x-1)^2, \qquad \textup{ for all $x$}, \textup{ and }
\end{align}
\begin{align}
 \frac{\II(-x)}{\sqrt{x} \log x}, \, \frac{\ldp(x)}{x^2\log^2 x} \textup{ and }\frac{-K^{-1}(1/x)}{x^2 \log^2 x} \textup{ are bounded away from 0 and $\infty$ for $x>2$.}
%
%
%
%c_2 \le \frac{\ldp(x)}{x^2\log^2 x}&\le c_3 , \quad \textup{and} \quad 
%c_2 \le \frac{-K^{-1}(1/x)}{x^2 \log^2 x}\le c_3 , \qquad \textup{ for all $x>2$.}
 \label{ldp_log}
\end{align}
\end{proposition}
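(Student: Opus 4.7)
The plan is to reduce every part of the statement to asymptotics of $K(a)$ and $E(a)$ at the two relevant endpoints --- $a\to 1^-$ (used for $x\to 0^+$) and $a\to-\infty$ (used for $x\to\infty$) --- combined with the continuity and strict convexity established in Proposition~\ref{prop:ldpconvex}.

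For the two limits at $0^+$: when $x\to 0^+$, the quantity $a=K^{-1}(\pi/(2x))$ tends to $1^-$ because $K(a)=\pi/(2x)\to\infty$. Using the classical expansion $K(a)=\tfrac12\log\tfrac{16}{1-a}+o(1)$ together with $K(a)=\pi/(2x)$, one gets $1-a=16e^{-\pi/x}(1+o(1))$, exponentially small in $1/x$. Plugging into $\ldp(x)=(2-a)/8-E(a)/(4K(a))$ and using $E(a)\to E(1)=1$, $K(a)\to\infty$ yields $\ldp(x)\to 1/8$. For the derivative, starting from (\ref{ldp'}) and substituting $K(a)=\pi/(2x)$, a short manipulation yields
\begin{align*}
\ldp'(x)=\frac{1-a}{4x}-\frac{E(a)}{2\pi};
\end{align*}
the exponential smallness of $1-a$ kills the first term while the second tends to $-1/(2\pi)$.

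For the quadratic bound (\ref{ldp_quad}): by Proposition~\ref{prop:ldpconvex}, $\ldp(1)=\ldp'(1)=0$, and the formula (\ref{I''}) together with $K'(0)=\pi/8$ gives $\ldp''(1)=1/2$, so $\ldp(x)/(x-1)^2\to 1/4$ as $x\to 1$. Combined with the already-proved $\ldp(x)/(x-1)^2\to 1/8$ as $x\to 0^+$, continuity, and strict positivity on $(0,\infty)\setminus\{1\}$, one obtains a positive lower bound on any compact subinterval of $[0,\infty)$. At infinity the growth $\ldp(x)\asymp x^2\log^2 x$ (which is the content of (\ref{ldp_log})) dominates $(x-1)^2$, so the infimum of $\ldp(x)/(x-1)^2$ over $(0,\infty)$ is strictly positive.

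The main work is (\ref{ldp_log}). Writing $a=-b$ with $b>0$ large and substituting $u=\sin\theta$:
\begin{align*}
K(-b)=\int_0^1\frac{du}{\sqrt{(1-u^2)(1+bu^2)}},\qquad E(-b)=\int_0^1\frac{\sqrt{1+bu^2}}{\sqrt{1-u^2}}\,du.
\end{align*}
I would split each integral at $u_0=1/\sqrt{b}$. On $[0,u_0]$, $\sqrt{1-u^2}=1+O(1/b)$ and $\int_0^{u_0}du/\sqrt{1+bu^2}=\operatorname{arcsinh}(1)/\sqrt{b}$; on $[u_0,1]$, $\sqrt{1+bu^2}=\sqrt{b}\,u\bigl(1+O(1/(bu^2))\bigr)$, and the explicit antiderivative $\int du/(u\sqrt{1-u^2})=\log\bigl((1+\sqrt{1-u^2})/u\bigr)$ produces the $\tfrac12\log b$ factor. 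This yields two-sided bounds $K(-b)=\tfrac{\log b}{2\sqrt b}(1+o(1))$ and (by the same splitting) $E(-b)=\sqrt b\,(1+o(1))$, hence $\II(-b)=(1+b)K(-b)-E(-b)\sim\tfrac12\sqrt b\,\log b$. Inverting $K(-b)=1/x$ and iterating (since $\log b=2\log x+O(\log\log x)$) gives $-K^{-1}(1/x)=b\asymp x^2\log^2 x$. Finally, with $a=-b=K^{-1}(\pi/(2q))$ the same inversion gives $b\asymp q^2\log^2 q$; in $\ldp(q)=(2+b)/8-E(-b)/(4K(-b))$, the first term is $\sim b/8$ while the second is $\sim b/(2\log b)=o(b)$, so $\ldp(q)\sim b/8\asymp q^2\log^2 q$. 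The principal obstacle is keeping the error terms in the split integrals explicit enough to produce bounds valid for all $x\ge 2$ (not merely asymptotically); this is handled by careful book-keeping in the two integrals above.
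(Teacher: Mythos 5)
Your proposal is correct and follows essentially the same route as the paper: establish the $a\to-\infty$ asymptotics of $K$, $E$ (hence of $\II$, $K^{-1}$ and $\ldp$ at infinity), obtain the limits at $0^+$ from the $a\to 1^-$ behavior of the elliptic integrals, and then deduce the quadratic lower bound from $\ldp(1)=\ldp'(1)=0$, $\ldp''(1)>0$, continuity, and the superquadratic growth at infinity. The only differences are cosmetic: you derive the large-parameter asymptotics by splitting the integrals at $u_0=1/\sqrt b$ where the paper cites them as "readily derived" (with references), and your closed form $\ldp'(x)=\tfrac{1-a}{4x}-\tfrac{E(a)}{2\pi}$ is a clean way to get the derivative limit at $0^+$.
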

\begin{proof}
The following asymptotics can be readily derived from the definitions of elliptic integrals (or by the existing more sophisticated expansions c.f.~\cite{gustafson1},\cite{gustafson2}). There is a constant $c>0$ so that 
\begin{align}\label{elliptic1}
\left|K(-a)-\frac{1}{2\sqrt{a}}\log(16 a)\right|\le \frac{c}{a^{3/2}}\log(a), \quad \left|E(-a)-\sqrt{a} \right|\le \frac{c}{a^{1/2}}\log(a), \quad \textup{for } a>2. %\label{elliptic2}
%\left|K(-a)-\frac{1}{2\sqrt{a}}\log(16 a)\right|\le \frac{c}{a^{3/2}}\log(a), \qquad \textup{for } a>2, \\
%\left|E(-a)-\sqrt{a} \right|\le \frac{c}{a^{1/2}}\log(a), \qquad \textup{for } a>2. \label{elliptic2}
\end{align}
From this it is easy to check that 
\begin{align}
\lim\limits_{x\to \infty} \frac{\II(-x)}{\sqrt{x} \log x}=\frac12, \qquad
\lim\limits_{x\to \infty} \frac{-K^{-1}(1/x)}{x^2\log^2 x}=1, \quad \textup{and} \qquad
\lim\limits_{x\to \infty} \frac{\ldp(x)}{ x^2 \log^2 x}=\frac{1}{2\pi^2}. \label{limits}
\end{align}
This gives (\ref{ldp_log}). Note, that together with (\ref{ldp'}) this also gives
\begin{align}\label{lim_ldp'}
\lim\limits_{x\to \infty} \frac{\ldp'(x)}{x\log^2 x}=\frac{1}{\pi^2}.
\end{align}
Using the functional identities
\[
E(z)=\sqrt{1-z} E\left(\frac{z}{z-1}\right), \qquad K(z)=\frac{1}{\sqrt{1-z}} K\left(\frac{z}{z-1}\right), \qquad z\in (0,1)
\]
the asymptotics of  (\ref{elliptic1})  can be transformed into 
\[
K(a)\sim -\tfrac{1}{2}\log(1-a), \qquad E(a)\sim 1, \qquad \textup{ as }a\to 1^{-1}
\]
with explicit error bounds for $2/3<a<1$. 
From this we can obtain $\lim\limits_{x\to 0^{+}} \ldp(x)=\tfrac18$ and $ \lim\limits_{x\to0^{+}}\ldp'(x)=-\tfrac1{2\pi}$. Using (\ref{ldp_log}) with the continuity of $\ldp$ and the fact that $\ldp(1)=0$ is an absolute minimum with $\ldp''(1)>0$ gives (\ref{ldp_quad}).
\end{proof}

The following two lemmas help to consolidate error terms that appear in the proofs Theorems \ref{thm:sineb1} and \ref{thm:schr1}.

\begin{lemma}
\label{Hbound}
There exists an absolute constant $c$ such that for any $t, q>0$ we have 
\begin{equation}
|\II(a)|+|a|t/2 \leq c(t+1)( \mathcal{I}(q)+1)
\end{equation}
where $a=a(q)=K^{-1}(\pi/(2q))$. 
\end{lemma}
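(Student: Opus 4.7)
My plan is to reduce the claim to a $t$-independent estimate. Since $t\le t+1$ we immediately have $|\II(a)| + |a|t/2 \le (t+1)(|\II(a)|+|a|/2)$, so it is enough to prove that
\[
|\II(a(q))| + |a(q)|/2 \;\le\; c\bigl(\ldp(q)+1\bigr) \qquad \textup{for all } q>0,
\]
with an absolute constant $c$; multiplying by $t+1$ then yields the lemma.

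The function $q\mapsto a(q)=K^{-1}(\pi/(2q))$ is a continuous bijection from $(0,\infty)$ onto $(-\infty,1)$, and both $\II$ and $\ldp$ are continuous, so the ratio $R(q):=(|\II(a(q))|+|a(q)|/2)/(\ldp(q)+1)$ is continuous on $(0,\infty)$ and thus bounded on every compact subinterval. It therefore only remains to examine the behaviour at the two ends of the range.

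As $q\to 0^+$, $K(a)=\pi/(2q)\to\infty$ forces $a\to 1^-$, so $|a|\le 1$, $E(a)\to 1$, and $(1-a)K(a)\to 0$ (using $K(a)\sim -\tfrac12\log(1-a)$), giving $|\II(a)|\to 1$. The numerator of $R(q)$ stays bounded while $\ldp(q)+1\ge 1$. As $q\to\infty$ we have $a\to -\infty$, and I will feed in the three asymptotics packaged in Proposition \ref{prop:ldpasympt}: $|a(q)|=\Theta(q^2\log^2 q)$, $|\II(a)|=O(\sqrt{|a|}\log|a|)=O(q\log^2 q)$, and $\ldp(q)=\Theta(q^2\log^2 q)$. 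Hence $|a|/\ldp(q)$ stays bounded while $|\II(a)|/\ldp(q)\to 0$, so $R(q)=O(1)$ on this end as well.

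The only real obstacle is bookkeeping: all of the needed analytic ingredients are already contained in Proposition \ref{prop:ldpasympt}, but one must combine them so that a single constant $c$ works simultaneously in both asymptotic regimes and on the intermediate compact part of the range. No further analytic input is required beyond continuity of $a(\cdot)$, $\II$, and $\ldp$, together with the appendix asymptotics.
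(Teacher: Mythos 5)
Your proposal is correct and follows essentially the same route as the paper: factor out the trivial $t$-dependence, then bound $|\II(a(q))|+|a(q)|/2$ by a constant multiple of $\ldp(q)+1$ using the large-$q$ asymptotics $|a(q)|,\ldp(q)=\Theta(q^2\log^2 q)$ and $|\II(a)|=O(\sqrt{|a|}\log|a|)$ from Proposition \ref{prop:ldpasympt}, together with boundedness of $a(q)$ and $\II(a(q))$ on the remaining range. The paper simply splits at $q=2$ rather than invoking continuity on compacts plus endpoint limits, but the ingredients and the constant-chasing are identical.
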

\begin{proof}
Using   (\ref{elliptic1}) with the definition (\ref{ldpdef}) we get that there is a constant $c_2$ so that 
\begin{align}
c_2^{-1} a(q)\le \ldp(q)\le c_2 a(q), \qquad \textup{if $q>2$},
\end{align}
and the same bounds also give
\begin{align}
|\II(a(q))|\le c_3 \sqrt{|a(q)|} \log |a(q)|
\end{align}
for some constant $c_3$ in the same region. This shows the existence of a constant $A$ with
\[ 
|\II(a)| \leq A \, \mathcal{I}(q), \hspace{1cm} \text{ and } \hspace{1cm} a(q) \leq A  \mathcal{I}(q), \qquad \textup{for $q>2$}.
\]
Since for  $0<q<2$ both $a(q)$ and $\II(a(q))$ are bounded  the lemma  follows. 
%
%
%We first look at the limiting behavior as $C,a\to \infty$.  From Proposition \ref{prop:ldpasympt} we have that
%\[
%\lim\limits_{a\to \infty} \frac{\II(-a)}{a}=0 \hspace{1cm} \text{ and } \hspace{1cm} \lim\limits_{a\to \infty} \frac{\ldp (C)}{a}= \frac{1}{8}
%\]
%therefore
%\[
%\lim\limits_{a\to \infty} \frac{\II(-a)}{\ldp(C)}= 0
%\]
%Moreover as $C\to 0$, $a\to -1$ and 
%\[ 
%\lim\limits_{a\to -1} \II (-a) = -1
%\]
%Using continuity the fact that $\mathcal{I}(C)+1$ is bounded below by $1$ there exists a constants $A, B$ so that 
%\[ 
%|\mathcal{H}(a)| \leq A (\mathcal{I}(C)+1), \hspace{1cm} \text{ and } \hspace{1cm} a \leq B ( \mathcal{I}(C)+1)
%\]
%
%
\end{proof}

%\diane{you wanted to check my added `increasing in $\eps$', we use this just above (\ref{upper1})}
\begin{lemma}
\label{Iexpansion}
For any $0\leq \eps < 1/2$ there exists an absolute constant $c$, so that 
\begin{equation}
\mathcal{I}(x+\varepsilon) \leq (1+\varepsilon)I(x)+ c \varepsilon\label{xxx5}
\end{equation}
\end{lemma}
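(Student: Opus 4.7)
The plan is to derive (\ref{xxx5}) from the convexity of $\ldp$ (established in Proposition \ref{prop:ldpconvex}) combined with the growth asymptotics of Proposition \ref{prop:ldpasympt}. Since $0 \le \eps < 1/2 < 1$, the identity $x + \eps = (1-\eps)\, x + \eps\, (x+1)$ is a bona fide convex combination; convexity of $\ldp$ therefore yields
\[
\ldp(x+\eps) \le (1-\eps)\ldp(x) + \eps\, \ldp(x+1) = (1+\eps)\,\ldp(x) + \eps\bigl[\ldp(x+1) - 2\,\ldp(x)\bigr].
\]
Thus the lemma reduces to producing an absolute constant $c$ so that
\[
\ldp(x+1) - 2\,\ldp(x) \le c \qquad \text{for all } x \ge 0.
\]

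To verify this $\eps$-free bound I would split the range of $x$ into two regimes. On any compact interval $[0, M]$, continuity of $\ldp$ on $(0,\infty)$ together with the limit $\ldp(0^+) = \tfrac{1}{8}$ from Proposition \ref{prop:ldpasympt} produces a finite upper bound for $\ldp(x+1)$, so the inequality holds as soon as $c$ exceeds this bound. For $x > M$, the asymptotic $\ldp(x) \sim \tfrac{x^2\log^2 x}{2\pi^2}$ from (\ref{limits}) gives
\[
\lim_{x \to \infty} \frac{\ldp(x+1)}{\ldp(x)} = \lim_{x \to \infty} \left(\frac{x+1}{x}\right)^{\!2} \left(\frac{\log(x+1)}{\log x}\right)^{\!2} = 1,
\]
so choosing $M$ large enough guarantees $\ldp(x+1) \le 2\,\ldp(x)$, i.e.\ $\ldp(x+1) - 2\,\ldp(x) \le 0$ for all $x > M$. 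Combining the two regimes produces the desired absolute constant $c$.

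There is essentially no obstacle here beyond bookkeeping: the asymptotic comparison $\ldp(x+1)/\ldp(x)\to 1$ is immediate from the already-proven growth rate, and the bounded regime is handled by compactness. The only mild subtlety is that the bound needs to hold at $x=0$, which is accommodated by the continuous extension $\ldp(0):=\tfrac{1}{8}$ forced by Proposition \ref{prop:ldpasympt}; this same extension is what makes the convex combination argument meaningful when $x=0$.
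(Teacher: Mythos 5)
Your argument is correct, and it takes a genuinely different route from the paper. The paper starts from the tangent-line form of convexity, $\ldp(x+\eps)\le \ldp(x)+\eps\,\ldp'(x+\eps)$, and then controls the derivative term in three regimes: where $\ldp'\le 0$ it is free; for large $x$ it uses the explicit identity $\ldp'(x)=\tfrac1x\ldp(x)-\tfrac{1}{8x}K^{-1}(\tfrac{\pi}{2x})$ together with the asymptotics of $K^{-1}$ to show $\eps\,\ldp'(x+\eps)\le\eps\,\ldp(x)$, which is where the factor $(1+\eps)$ comes from; and on the remaining compact range it takes $c=\sup\ldp'$. Your convex-combination decomposition $x+\eps=(1-\eps)x+\eps(x+1)$ instead reduces everything to the single $\eps$-free inequality $\ldp(x+1)-2\ldp(x)\le c$, which you verify from boundedness of $\ldp$ on compacts (using $\ldp(0^+)=\tfrac18$ and $\ldp\ge 0$) plus the ratio limit $\ldp(x+1)/\ldp(x)\to 1$ coming from the growth rate $\ldp(x)\sim\tfrac{1}{2\pi^2}x^2\log^2x$. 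This buys a cleaner proof that never touches the derivative formula or the asymptotics of $K^{-1}$; the only points worth keeping explicit are the ones you already flag, namely that the continuous extension $\ldp(0):=\tfrac18$ of the convex function $\ldp$ to $[0,\infty)$ is still convex (standard for a convex function with a finite one-sided limit at the endpoint), so the convex combination is legitimate when $x=0$. Both proofs yield an absolute constant independent of $\eps$, as required.
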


\begin{proof}
Since $\ldp $ is convex, we have
\[
\mathcal{I}(x+ \varepsilon)\leq \mathcal{I}(x)+ \varepsilon \mathcal{I}'(x+\epsilon).
\]
Since $\ldp(x)$ is decreasing on $[0,\pi/2]$, the bound (\ref{xxx5}) follows immediately for $x\in [0,\pi/2-\eps]$ with any $c\ge 0$.  Using (\ref{ldp'}) we  get
\begin{align}
\mathcal{I}(x+ \varepsilon)&\leq \mathcal{I}(x)+ \frac{\varepsilon}{x+\varepsilon}\left(\mathcal{I}(x+\eps)- \frac18 K^{-1}\left(\frac{\pi}{2(x+\varepsilon)}
\right)\right).
\end{align}
From (\ref{limits})  it follows that there  exists an $x_0>0$ such that 
\[
\frac{\varepsilon}{x+\varepsilon}\left(\mathcal{I}(x+\eps)- \frac18 K^{-1}\left(\frac{\pi}{2(x+\varepsilon)}\right)\right)\le \eps \ldp(x), \qquad \textup{if $x\ge x_0$}
\]
uniformly in $\eps\in [0,1/2]$.
Therefore, for $x>x_0$ we have that $\mathcal{I}(x+\varepsilon) \leq (1+\varepsilon)\mathcal{I}(x).$
We can assume $x_0>\pi/2$. By choosing 
\[
c= \sup_{x\in [\pi/2, x_0+1/2]} \ldp'(x)=\ldp'(x_0+1/2)
\]
we get $\mathcal{I}(x+\varepsilon) \leq \mathcal{I}(x)+c \varepsilon$ on $[\pi/2-\eps, x_0]$ with any $0\le \eps<1/2$ and the lemma follows.
\end{proof}

\section*{Appendix B: Properties of $\sineldp$}

In the final section of the appendix we describe the  behavior of the function $\sineldp(\rho)$ near $\rho=\tfrac{1}{2\pi}$ and $\rho \to \infty$. 

\begin{proposition}\label{prop:sineldp}
The functions $\gamma(\nu)$  and  $\sineldp(\rho)$ satisfy the following.
\begin{enumerate}
\item The function $\gamma(\nu)$ defined in (\ref{rhonu}) is  continuous and strictly decreasing. 
It  satisfies the differential equation $4x(1-x)\gamma''(x)=\gamma(x)$ on $(-\infty,0)$ and on  $(0,1)$ with boundary behavior $\lim\limits_{x\to 0^{\pm}} \gamma(x)=\tfrac{1}{2\pi}$, $ \gamma(1)=0$ and $\lim\limits_{x\to -\infty} \frac{\gamma(x)}{\sqrt{|x|}}=\tfrac14$.
These limits and the differential equation identify $\gamma(x)$ uniquely on $(-\infty,0)\cup(0,1]$.

\item  
%\benedek{Instead: asymptotics of $\sineldp$ near $\tfrac1{2\pi}$. I get $-\tfrac{\pi^2}{4} \tfrac{x^2}{\log |x|}$, check constants. This should be consistent with the CLT. }
%
%
%\benedek{****Side computation*****}
%
%We know that $N(\lambda)-\tfrac{1}{2\pi} \lambda \sim \cN(0,\tfrac{2}{\beta \pi^2} \log \lambda)$. Roughly speaking, this means that 
%\[
%P(N(\lambda)-\tfrac{1}{2\pi} \lambda\le y)\sim e^{-\tfrac{ \beta \pi^2 y^2}{4\log \lambda}}
%\]
%On the other hand, for small $x$ we have
%\[
%P(N(\lambda)-\tfrac{1}{2\pi} \lambda\le \lambda x)\sim e^{-\tfrac{\beta \pi^2}{4} \lambda^2 \tfrac{x^2}{\log |x|^{-1}}}
%\]
%Substituting $x=y/\lambda$ we get
%\begin{align*}
%P(N(\lambda)-\tfrac{1}{2\pi} \lambda\le y)&\sim e^{-\tfrac{\beta \pi^2}{4} \lambda^2 \tfrac{(y/\lambda)^2}{\log |y/\lambda|^{-1}}}\\
%&\sim e^{-\tfrac{\beta \pi^2}{4} \tfrac{y^2}{\log \lambda}}
%\end{align*}
%which agrees with the CLT. 
%
%\benedek{*****}

We have $\sineldp(0)=\tfrac{1}{64}$, $\sineldp(\tfrac{1}{2\pi})=0$, and  $\sineldp''(x)>0$ for $x\neq \tfrac{1}{2\pi}$. Moreover, we have the following limits:
\[
\lim\limits_{x\to 0} \frac{\sineldp (\tfrac{1}{2\pi}+x)}{\tfrac{x^2}{\log(1/ |x|)}}=\frac{\pi^2}{4} , \qquad \textup{and} \qquad \lim\limits_{\rho\to \infty} \frac{\sineldp (\rho)}{\rho^2\log \rho}=\frac12.
\]

%
%$\sineldp(\tfrac{1}{2\pi})=\sineldp'(\tfrac{1}{2\pi})=\sineldp''(\tfrac{1}{2\pi})=0$. 
%
% 
%\item We have $\sineldp''(x)>0$ for $x\neq \tfrac{1}{2\pi}$, and $\lim\limits_{x\to 0} \sineldp''(\tfrac{1}{2\pi}+x)\log|x|=-\frac{1}{2}$. 
%\item We have $\lim\limits_{\rho\to \infty} \frac{\sineldp (\rho)}{\rho^2\log \rho}=\tfrac12$.
\end{enumerate}
\end{proposition}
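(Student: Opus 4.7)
My plan for Part (1) is to identify $\II$ and $\gamma$ as the two fundamental solutions of a common second-order linear ODE, and to pin down $\gamma$ by its boundary behavior. Starting from the derivative formulas for $K$ and $E$ recorded in the proof of Proposition \ref{prop:ldpconvex}, a direct calculation gives $\II'(a)=-K(a)/2$, and differentiating once more with $K'(a)=-\II(a)/[2a(1-a)]$ yields
\begin{equation*}
4x(1-x)\II''(x)=\II(x), \qquad x\in(-\infty,0)\cup(0,1).
\end{equation*}
Since this ODE has no first-derivative term, reduction of order makes $y_2(\nu)=\II(\nu)\int^\nu\II^{-2}(x)\,dx$ a second, independent solution with Wronskian $\II y_2'-\II' y_2\equiv 1$. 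The definitions in (\ref{rhonu}) make $\gamma$ equal to $y_2/8$ on each of the two intervals, so $4x(1-x)\gamma''=\gamma$ and $\II\gamma'-\II'\gamma=\tfrac18$.

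To verify the boundary conditions I would use $\II(0)=0$ and $\II'(0)=-\pi/4$, giving $\II(\nu)\sim -\pi\nu/4$ near $0$: the diverging integrals $\int^\nu\II^{-2}\sim\mp 16/(\pi^2\nu)$ combine with $\II(\nu)/8\sim-\pi\nu/32$ to produce $\gamma(0^\pm)=1/(2\pi)$. For $\nu\to-\infty$, the asymptotics $\II(-y)\sim\tfrac12\sqrt{y}\log y$ from (\ref{limits}) give $\int_{-\infty}^\nu\II^{-2}\sim 4/\log|\nu|$, hence $\gamma(\nu)/\sqrt{|\nu|}\to 1/4$; at $\nu=1$, $\II(1)=-1$ is finite while $\int_1^\nu\II^{-2}\to 0$, so $\gamma(1)=0$. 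Uniqueness on each interval follows from ODE theory: the two-dimensional solution space is spanned by $\II$ and $y_2$, and the two prescribed boundary conditions select $\gamma$ because any nonzero admixture of $\II$ would violate one of them ($\II(1)=-1\neq 0$ on $(0,1)$; $\II$ grows faster than $\sqrt{|\nu|}$ on $(-\infty,0)$). Strict monotonicity I plan to deduce from the variational characterization developed in the proof of Theorem \ref{thm:sineb1}: strict convexity of $\ldp$ and uniqueness of the minimizers in (\ref{grho1})--(\ref{grho3}) force the chain of maps $\rho\leftrightarrow c_\rho\leftrightarrow\nu=\gamma^{-1}(\rho)$ to be strict bijections, so $\gamma$ is strictly monotone; the direction is decreasing, from $\gamma(-\infty)=+\infty$ to $\gamma(1)=0$.

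For Part (2), $\sineldp(0)=1/64$ and $\sineldp(1/(2\pi))=0$ follow from (\ref{defsineldp}) by substituting $\nu=1$ (the $\rho\II$ term vanishes because $\rho=0$) and $\nu=0$ (both terms vanish because $\nu=0$ and $\II(0)=0$). Strict convexity $\sineldp''(\rho)>0$ for $\rho\neq 1/(2\pi)$ inherits from the variational identity $\beta\sineldp(\rho)=\min\{\Psine(g):g(1)=2\pi\rho\}$: $\Psine$ is a strictly convex functional (as $\ldp$ is strictly convex on $(0,\infty)$), and since the minimizer varies non-trivially with the constraint parameter $c_\rho$, the value function is strictly convex on each of $(0,1/(2\pi))$ and $(1/(2\pi),\infty)$.

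The two asymptotic limits I would derive by matched expansions in $\nu$. Near $\rho=1/(2\pi)$, use $\II(\nu)=-\tfrac{\pi}{4}\nu-\tfrac{\pi}{32}\nu^2+O(\nu^3)$ (the second coefficient extracted from the ODE) together with the expansion $\gamma(\nu)=\tfrac1{2\pi}+\tfrac{\nu\log|\nu|}{8\pi}+O(\nu)$ obtained by integrating $\gamma''(\nu)\sim 1/(8\pi\nu)$ from the ODE. Inverting this gives $|\nu|\sim 8\pi|x|/\log(1/|x|)$ for $x=\rho-\tfrac1{2\pi}\to 0$. Substituting into $\sineldp=\tfrac{\nu}{64}+\tfrac{\rho\II(\nu)}{8}$, the leading $O(\nu)$ terms cancel exactly (since $\tfrac1{2\pi}\cdot(-\pi\nu/4)/8=-\nu/64$), leaving $\sineldp\sim -\nu^2\log|\nu|/256=\nu^2|\log|\nu||/256$, which converts to $\pi^2 x^2/[4\log(1/|x|)](1+o(1))$. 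For $\rho\to\infty$, $\nu\sim-16\rho^2$ from $\gamma\sim\sqrt{|\nu|}/4$ gives $\II(\nu)\sim\tfrac12\sqrt{|\nu|}\log|\nu|\sim 4\rho\log\rho$, so $\rho\II(\nu)/8\sim\tfrac12\rho^2\log\rho$ dominates the subdominant term $\nu/64\sim-\rho^2/4$, yielding $\sineldp(\rho)\sim\tfrac12\rho^2\log\rho$. The main obstacle I anticipate is the delicate cancellation near $\rho=1/(2\pi)$: one must track enough logarithmic precision in the inversion $x\leftrightarrow\nu$ to extract the exact constant $\pi^2/4$, which requires handling the fact that the leading linear-in-$\nu$ contribution cancels and only the next order $\nu^2\log\nu$ term survives.
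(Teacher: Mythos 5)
Your treatment of the ODE structure (the identity $4x(1-x)\II''=\II$, the reduction-of-order representation of $\gamma$, the Wronskian relation $\II\gamma'-\II'\gamma=\tfrac18$), the boundary asymptotics of $\gamma$ at $0^{\pm}$, $1$ and $-\infty$, the uniqueness argument via the two-dimensional solution space, the values $\sineldp(0)=\tfrac1{64}$ and $\sineldp(\tfrac1{2\pi})=0$, and both asymptotic limits of $\sineldp$ all match the paper's proof in substance and check out. The two places where you depart from the paper are exactly the two places where the argument is incomplete.

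First, strict monotonicity of $\gamma$. You propose to import it from the variational problem in the proof of Theorem \ref{thm:sineb1}, asserting that strict convexity of $\ldp$ forces the chain $\rho\leftrightarrow c_\rho\leftrightarrow\nu$ to be a strict bijection. As written this is under-justified (you would still need strict monotonicity of $(\ldp')^{(-1)}$, strict monotonicity and surjectivity of the constraint integral in $c_\rho$, and the identification of the resulting map $\nu\mapsto\rho$ with the $\gamma$ of (\ref{rhonu}), which is the content of Proposition \ref{prop:simplifiedRF}), and it is logically awkward: Theorem \ref{thm:sineb1} states its rate function in terms of $\gamma^{(-1)}$, so the invertibility of $\gamma$ must be in place before that theorem, not extracted from its proof. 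A direct argument is available from what you already have: $\gamma>0$ on each interval by its defining formula, so the ODE gives $\gamma''<0$ on $(-\infty,0)$ and $\gamma''>0$ on $(0,1)$, hence $\gamma'$ is monotone on each piece; computing $\lim_{\nu\to-\infty}\gamma'(\nu)=0$ and $\lim_{\nu\to1^{-}}\gamma'(\nu)=-\tfrac18$ from the Wronskian identity and (\ref{limits}) then forces $\gamma'<0$ throughout. This is what the paper does.

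Second, the claim $\sineldp''(x)>0$. Strict convexity of the value function of a convex variational problem gives strict (midpoint) convexity of $\sineldp$, but it gives neither twice differentiability nor pointwise positivity of the second derivative, which is what the proposition asserts. The paper instead differentiates (\ref{defsineldp}) using the Wronskian identity to obtain the closed forms $\sineldp'(\rho)=\tfrac14\II(\nu)$ and $\sineldp''(\rho)=-\tfrac18\,K(\nu)/\gamma'(\nu)$, and positivity follows from $K(\nu)>0$ and $\gamma'(\nu)<0$ --- note this again leans on the monotonicity established in part (1). Both gaps are repairable with ingredients you already have on the table, but as proposed neither step is actually proved.
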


\begin{proof}
Recall the function $\gamma(\nu)$ given in  (\ref{rhonu}). Using the asymptotics (\ref{limits}) proved in Proposition \ref{prop:ldpasympt} it is easy to see that $\gamma(\nu)$ is well-defined and positive in $(-\infty,0)\cup(0,1)$ with $\lim\limits_{\nu\to 1^{-}} \gamma(\nu)=0=\gamma(1)$ and $\lim\limits_{x\to -\infty} \frac{\gamma(x)}{\sqrt{|x|}}=\tfrac14$. We also get that   $\gamma(x)\II(x)$ blows up as $\nu \to 0^-$ or $0^+$.

 Differentiating  (\ref{rhonu}) and using the definition (\ref{defH}) lead to 
\begin{align}\label{stuff}
 \II(x)\gamma '(x)= \frac{1}{8}+ \II'(x)\gamma (x), \qquad \frac{\gamma ''(x)}{\gamma(x)}= \frac{\II''(x)}{\II(x)}=\frac{1}{4x(1-x)},
  \end{align}
 for $x\in(-\infty,0)\cup(0,1)$. 
 We have $\II'(x)=-\frac{K(x)}{2}<0$ for $x<1$ and $\II(0)=0$. Thus from the second identity we get that $\gamma'(\nu)$ is strictly decreasing in $(-\infty,0)$ and strictly  increasing in $(0,1)$.  From the asymptotics (\ref{limits}) of Proposition \ref{prop:ldpasympt} it is not hard to check that $\lim\limits_{\nu\to -\infty} \gamma'(\nu)=0$ and $\lim\limits_{\nu\to1^{-}} \gamma'(\nu)=-\tfrac18$. This, together with the previous statement,  proves that $\gamma(\nu)$ is decreasing on $(-\infty,0)$ and also on $(0,1]$.

Since $(\gamma(x) \II(x)^{-1})'=\tfrac{1}{8}\II(x)^{-2}$,  L'Hospital's rule  gives
\begin{align}
\lim\limits_{\nu\to 0} \gamma(\nu)=\lim\limits_{\nu\to 0}\frac{\gamma(\nu) \II(\nu)^{-1}}{\II(\nu)^{-1}}=-\tfrac{1}{8}H'(0)=\frac{1}{2\pi}=\gamma(0).\notag
\end{align}
 Then from (\ref{stuff}) it follows that 
 \begin{align*}%\label{zzz}
 \lim\limits_{\nu\to 0} \gamma''(\nu)\nu=\frac{1}{8\pi}, \qquad \lim\limits_{\nu\to 0} \frac{\gamma'(\nu)}{\log|\nu|}=\frac{1}{8\pi},
 \end{align*}
 and also that 
 \begin{align}\label{yyy}
 \lim\limits_{x\to 0} \frac{\gamma^{(-1)}(\tfrac1{2\pi}+x)}{8\pi \tfrac{x}{\log|x|}}=1.
 \end{align}
This shows that $\gamma(\nu)$ is continuous and strictly decreasing on $(-\infty,1]$. We have $\gamma^{(-1)}(1)=0$ and $\sineldp(0)=\tfrac{1}{64}$.

Note, the fact that $\gamma(x)$ solves $4x(1-x) \gamma''(x)=\gamma(x)$ on $(-\infty,0)\cup(0,1)$ and has the proven asymptotics at $-\infty, 0$ and $1$ uniquely identifies it. The equation $4x(1-x) y''(x)=y(x)$ has two linearly independent solutions on both $(-\infty,0)$ and $(0,1)$.  The function $\II(x)$ also solves the  equation (on both intervals), but with $\II(0)=0$, $\lim\limits_{x\to1^{-}}\II(x)=-1$ and $\lim\limits_{x\to -\infty} \frac{\II(x)}{\sqrt{|x|} \log|x|}=\tfrac12$. This shows that any solution on $(-\infty,0)$ or $(0,1)$ can be expressed as $c_1 \gamma+c_2 \II$ with some constants $c_1, c_2$, and the values of the constants are determined  by the behavior of the solution at the end of the interval.

Using (\ref{stuff}) together with (\ref{defsineldp}) we can also compute that
\begin{align}\notag%\label{zzz}
\sineldp'(\rho)&= \frac{1}{8} \left[ \frac{1}{8\gamma'(\nu)}+\II(\nu)+ \frac{\gamma (\nu)\II'(\nu)}{\gamma '(\nu)}\right]= \frac{1}{4} \II(\nu), \textup{ and}\\
\sineldp''(\rho)&= \frac{1}{4} \frac{\II'(\nu)}{\gamma '(\nu)}= - \frac{1}{8} \frac{K(\nu)}{\gamma '(\nu)},\label{www}%\\
%\sineldp'''(\rho)&=   \frac{1}{4} \left[\frac{\II''(\nu)}{(\gamma '(\nu))^2}- \frac{\II'(\nu) \gamma ''(\nu)}{(\gamma '(\nu))^3}   \right]= \frac{1}{256 \nu(1-\nu)(\gamma '(\nu))^3},
\notag
\end{align}
where $\nu$ is short  for $\gamma^{-1}(\rho)$. From this $\sineldp(\tfrac1{2\pi})=0$ follows, together with  $\sineldp(x)>0$ for $x\neq \tfrac1{2\pi}$. The asymptotics of $\sineldp(\tfrac1{2\pi}+x)$ as  $x\to 0$ can be obtained from  the definition (\ref{defsineldp}), the asymptotics  (\ref{yyy}), and the fact that $\II(0)=0, \II'(0)=-\tfrac{\pi}{4}$.

Lastly we can look at the asymptotics of $\sineldp(\rho)$ as $\rho \to \infty$.  Recalling again (\ref{limits}) we get   
%\[
%\II(x) \sim (1-x)\frac{1}{2\sqrt{1-x}}\log(16(1-x))- \frac{-x}{\sqrt{1-x} }\sim \tfrac{1}{2} \sqrt {-x} \log -x
%\]
%and so
\begin{align*}
\II(-x) \sim \tfrac{1}{2} \sqrt {x} \log x, \qquad \gamma(-x)&\sim \frac{\sqrt{x}}{4}, \qquad \gamma^{(-1)}(x)\sim 16 x^2\qquad  \textup{ as $x\to \infty$},
\end{align*}
from which $\sineldp (\rho)\sim  \tfrac{1}{2} \rho^2 \log \rho$ follows for $\rho \to \infty$. 
%\[
%\sineldp (\rho) = \frac{1}{8} \left[ \frac{\nu}{8}+ \rho \II(\nu)\right]\sim  \tfrac{1}{4} \rho^2 \log \rho \quad \textup{for $\rho\gg 1$.}
%\]
\end{proof}

$ $\\[0pt]
\noindent\textbf{Acknowledgements.} B. Valk\'o was partially supported by the National Science Foundation CAREER award DMS-1053280. The authors would like to thank P.~J.~Forrester for helpful comments and additional references.

\def\cprime{$'$}

\end{document}